\documentclass[11pt,a4paper]{article}
\usepackage{amsmath,amssymb,amsthm}
\usepackage{mathtools}
\usepackage{fullpage,color}
\usepackage[colorlinks=true,anchorcolor=blue,filecolor=blue,linkcolor=red,urlcolor=blue,citecolor=blue]{hyperref}     
\usepackage{tikz,bm}
\usetikzlibrary{shapes.geometric, arrows.meta, positioning, calc} 
\usepackage{geometry} 
\newtheorem{theorem}{Theorem}[section]
\newtheorem{conjecture}[theorem]{Conjecture}
\newtheorem{lemma}[theorem]{Lemma}
\newtheorem{problem}[theorem]{Problem}
\newtheorem{corollary}[theorem]{Corollary}
\newtheorem{claim}[theorem]{Claim}

\theoremstyle{definition}

\newtheorem{example}[theorem]{Example}

\newtheorem{remark}{Remark}
\theoremstyle{plain}

\newcommand{\tr}{\operatorname{tr}}

\newcommand{\bk}{\operatorname{bk}}

\renewcommand{\leq}{\leqslant}
\renewcommand{\le}{\leqslant}
\renewcommand{\geq}{\geqslant}
\renewcommand{\ge}{\geqslant}

\title{Supersaturation in Nosal graphs: Triangles and books} 

\date{}

\begin{document}

\author{
Hongzhang Chen\thanks{School of Mathematics and Statistics, 
Gansu Center for Applied Mathematics, 
Lanzhou University, Lanzhou, Gansu, 730000, China. Email: \url{mnhzchern@gmail.com}.} 
\and 
Yongtao Li\thanks{Corresponding author. Yau Mathematical Sciences Center, Tsinghua University, Beijing, 100084, China. 
Email: \url{ytli0921@hnu.edu.cn}.}
\and 
Quanyu Tang\thanks{School of Mathematics and Statistics, Xi'an Jiaotong University, Xi'an 710049, China.
Email: \url{tangquanyu827@gmail.com}.}
}

\maketitle

\vspace{-0.4cm}

\begin{abstract}
A graph $G$ with $m$ edges is called \emph{Nosal} if its spectral radius satisfies 
$\lambda(G) > \sqrt{m}$. The spectral assumption $\lambda(G) > \sqrt{m}$ applies to 
 graphs of every edge density, including very sparse ones, and is therefore well suited to edge-spectral extremal problems. In this paper, we use the spectral surplus 
$\lambda(G) - \sqrt{m}$ to measure how far $G$ lies above the Nosal threshold, and 
prove the following edge-spectral supersaturation results for triangles and books.

\begin{itemize}
    \item Every graph $G$ with $m\ge 3$ edges and $\lambda(G) \ge 1 + \sqrt{m-2}$ contains at least $m-2$ triangles, with equality if and only if $G = K_3 \vee \tfrac{m-3}{3} K_1$. 
    This can be viewed as the third-layer supersaturation in the jump phenomenon, after the first layer $t(G) \ge \lfloor \tfrac{1}{2}(\sqrt{m}-1) \rfloor$ proved by Ning and Zhai, and the second layer $t(G) \ge \tfrac{m-1}{2}$ by Zhang and Zhai.

    \item Every $m$-edge graph $G$ satisfies 
    $t(G) \ge m\bigl(\lambda - \sqrt{m}\,\bigr)$, with equality if and only if $G$ 
    is complete bipartite. Consequently, $\lambda(G) \ge \sqrt{m} + q$ forces 
    $t(G) > q m$ for every real $q > 0$. This is an edge-spectral counterpart of the 
    Lov\'asz--Simonovits theorem, and it improves the Bollob\'as--Nikiforov bound 
    $t(G) \ge \tfrac13 \lambda(\lambda^2 - m)$ in the range 
$\sqrt m \le \lambda(G) \le 1.3\sqrt m $.

    \item Every $m$-edge Nosal graph $G$ contains a book of size greater than 
    $\tfrac14 \sqrt{m}$. This improves two recent results on the booksize constant: $\tfrac{1}{24}$ proved by Li, Liu and Zhang, and $\tfrac19$ by Zhai, Li and Lou. 
    This narrows the gap toward the conjectured optimal constant $\tfrac13$. 

    \item Every $m$-edge Nosal graph $G$ contains at least 
    $\bigl(\tfrac{1}{8} - o(1)\bigr) m$ copies of the kite $C_4^+=B_2$, and the constant 
    $\tfrac18$ is best possible. 
    This determines the sharp asymptotic constant for counting $C_4^+$ and strengthens the $\Omega(m)$ bound of Li, Liu and Zhang.
\end{itemize} 
The proofs for triangles are based on a third-moment count that controls $\sum_{i \ge 2} \lambda_i^3$ through the Cauchy interlacing theorem and the power-mean monotonicity. 
The proofs for books are based on a unified framework: a Perron-vector decomposition around a 
vertex of maximum coordinate, an exact identity for the spectral surplus 
$\lambda^2 - m$, and a half-missing-weights estimate. 
\end{abstract}



\section{Introduction}

Let $G$ be a finite simple graph. We denote by $m$ the number of edges of $G$ and by $n$ the number of vertices of $G$. 
Let $A(G)$ be its adjacency matrix and let $\lambda(G)$ be the largest eigenvalue of $A(G)$. 
The isolated vertices are ignored in all extremal and equality statements, since they do not affect the spectral radius and the number of triangles. 
Spectral extremal graph theory investigates how spectral conditions force the existence of certain subgraphs. 
Nosal's theorem \cite{Nosal1970} states that every graph $G$ with $m$ edges and $\lambda(G) > \sqrt{m}$ contains a triangle. 
Since $\lambda (G)$ is at least the average degree $\frac{2m}{n}$, 
Nosal's theorem is a spectral strengthening of Mantel's theorem, which asserts that any $n$-vertex graph with $m > \lfloor n^2/4 \rfloor$ edges must contain a triangle. 
The spectral assumption $\lambda (G)> \sqrt{m}$ is more versatile than the traditional size assumption 
since it not only implies the size version, but can also be applied to sparse graphs with every edge density. For example, the split graph $G=K_2\vee kK_1$ with $k=\frac{m-1}{2}$ satisfies $\lambda (G) > \sqrt{m}$ but $m\ll n^2/4$. 

\smallskip 
Classical supersaturation results usually measure the excess in terms of the number of vertices, e.g., the Moon--Moser theorem \cite{MM1962} (see also \cite[page 443]{Lov1979}) asserts that any $n$-vertex 
graph with $m> n^2/4$ edges contains at least
$\frac{4m}{3n}\big(m- \frac{n^2}{4} \big)$ triangles. This result is effective when $G$ is close to having $n^2/4$ edges, but it gives no information for sparse graphs. The spectral radius gives a new scale that uses only the number of
edges: $\sqrt{m}$ plays the role of $ n^2/4 $ and  stays useful at every edge density, which is why a graph as sparse as $K_2\vee kK_1$ still
falls within reach. {\it Our main motivation is to study supersaturation in this setting for sparse graphs with spectral radius above $\sqrt{m}$.}  

\smallskip 
A graph $G$ with $m$ edges is called {\it Nosal} if its spectral radius satisfies $\lambda (G)> \sqrt{m}$; this concept was formally introduced in \cite{LiLiuZhang26}.  
There are many extremal results for Nosal graphs. Nikiforov \cite{Niki2009laa} proved that every Nosal graph with $m\ge 10$ edges contains a $C_4$; see \cite{LNW2021, ZS2022dm, LLZ2025, WL2026} for related generalizations. Ning and Zhai \cite{NZ2021b} showed that $G$ contains at least $\frac{1}{2000}m^2$ copies of $C_4$. Later, Li, Liu and Zhang \cite{LiLiuZhang26} sharpened this result by proving that $G$ contains  at least $(\frac{1}{8} - o(1))m^2$ copies of $C_4$, and the constant $\frac{1}{8}$ is best possible. 
Moreover, they \cite{LiLiuZhang26} also proved that the maximum degree of any Nosal graph $G$ satisfies $\Delta (G)\le (\frac{1}{2} + o(1))m$, and the constant $\frac{1}{2}$ is  best possible. 
In the study of edge-spectral stability, Li, Liu and Zhang \cite{LLZ-edge-spectral} proved that every Nosal graph contains a copy of $K_{2,t}$ with $t\ge \frac{1}{2} \sqrt{m} - O(1)$, and this bound is best possible up to an $O(1)$ term.

\smallskip 
In this paper, 
we investigate the supersaturation as $\lambda(G)$ grows past Nosal's bound $\sqrt{m}$. Let $t(G)$ denote the number
of triangles in $G$. 
Continuing the first two layers in \cite{NingZhai2023, ZZ2026}, 
we establish the third-layer
supersaturation, with the extremal graph at each jump being a split graph (Theorem~\ref{thm:m-2}). Furthermore, we prove that if $q>0$ and $\lambda (G)\ge \sqrt{m} +q$, then $t(G)> qm$ (Theorem~\ref{thm-edge-spect-LS}). This bound follows from a linear dependence $t(G)\ge m\big(\lambda(G)-\sqrt{m}\,\big)$, with equality if and only if $G$ is complete bipartite (Theorem \ref{thm:strong}). Moreover, we develop a  framework for the problem involving books in Nosal graphs, and within this framework we prove the following two results. First, many triangles can be made to share a common edge, yielding a large book of size greater than $\frac{1}{4}\sqrt{m}$ (Theorems~\ref{thm-size-14} and \ref{thm:book-free}). Second, every Nosal graph contains at least $\big(\frac{1}{8} - o(1)\big)m$ copies of $C_4^+$, the $4$-cycle with an added chord, and the constant $\frac{1}{8}$ is optimal (Theorem~\ref{thm-C4+}). 
Moreover, a general result for counting the book $B_t$ unifies the cases of triangles and kites (Theorem \ref{thm:Bt-count}).

\subsection{Jump phenomenon for counting triangles}

Erd\H{o}s and Rademacher (see, e.g.,  \cite{Erd1955,Erdos1964}) extended the Mantel theorem by showing that  
if $G$ is a graph on $n$ vertices with 
$m> \lfloor n^2/4 \rfloor$, 
then $t(G) \ge \lfloor {n}/{2}\rfloor$; we refer to \cite{LPS2020,XK2021,LM2022-Erd-Rad,BC2023} for recent generalizations.  
In 2007, Bollob\'as and Nikiforov
\cite{BollobasNikiforov2007} initiated the study of spectral supersaturation and showed that 
if $G$ is an $m$-edge graph with spectral radius $\lambda$, then 
\begin{equation} 
\label{eq:BN}
  t(G) \ge \frac{\lambda \bigl(\lambda^2 - m\bigr)}{3}. 
  \end{equation}
  This bound was independently proved by 
  Cioab\u{a}, Feng, Tait and Zhang \cite[Lemma 7]{CFTZ20} in a different form: $m \ge \lambda^2- \frac{3}{\lambda}t(G)$. 
Ning and Zhai \cite{NingZhai2023} showed that the equality holds in (\ref{eq:BN}) if and only if $G$ is a complete bipartite graph.  
Moreover, they proved that if $\lambda(G)\geq \sqrt m$, then $t(G)\ge \lfloor \frac{1}{2} (\sqrt{m}-1) \rfloor$, unless  $G$ is complete bipartite;  
we refer to this as the {\it first-layer} supersaturation.

\smallskip 
An interesting problem is how to establish a general edge-spectral supersaturation when an $m$-edge graph has spectral radius exceeding $\sqrt{m}$. 
As a warm-up, we see from (\ref{eq:BN}) 
that if $\lambda (G)\ge \sqrt{m} + \frac{1}{2}$, then $t(G)\ge \left( \frac{1}{3} + o(1) \right)m$.  
Improving this bound, 
Zhang and Zhai \cite{ZZ2026} recently proved the following sharp supersaturation: if $G$ is an $m$-edge graph with $\lambda (G)\ge \frac{1}{2} (1+ \sqrt{4m-3}\,)$, then 
$ t(G) \ge \frac{m-1}{2}$,  
with equality if and only if 
$G=K_2\vee \frac{m-1}{2}K_1$. This can be viewed as the {\it second-layer} supersaturation.  
In this paper, we establish the {\it third-layer}   supersaturation as follows. 

\begin{theorem} \label{thm:m-2}
Let $G$ be a graph with $m\ge 3$ edges.  If
$\lambda(G)\ge 1+\sqrt{m-2}$, 
then 
\[
        t(G)\ge m-2.
\]
Moreover, the equality holds if and only if
       $G=K_3\vee \frac{m-3}{3}K_1$. 
\end{theorem}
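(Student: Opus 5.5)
We will use the trace identity $6t(G)=\tr A^3=\sum_i\lambda_i^3$, together with the second-moment identity $\sum_i\lambda_i^2=2m$. Write $\lambda=\lambda_1$. The negative eigenvalues lower the cubic sum, so the task is to bound $-\sum_{i\ge2}\lambda_i^3$ from above in terms of $\lambda$ and $m$.

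The abstract indicates the intended tool. By Cauchy interlacing (deleting a vertex, or comparing with the star/bipartite structure), each $|\lambda_i|$ with $i\ge 2$ is at most $\lambda$; more usefully, the smallest eigenvalue satisfies $|\lambda_n|\le\lambda$. Power-mean monotonicity applied to the remaining eigenvalues then gives
\[
\sum_{i\ge2}|\lambda_i|^3\le \max_{i\ge 2}|\lambda_i|\cdot\sum_{i\ge 2}\lambda_i^2 = \max_{i\ge2}|\lambda_i|\,(2m-\lambda^2).
\]
The crude form of this is $6t\ge \lambda^3-\lambda(2m-\lambda^2)=2\lambda(\lambda^2-m)$, which is Bollob\'as--Nikiforov. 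To reach the third layer this has to be sharpened. Write $\mu=\max_{i\ge 2}|\lambda_i|$. Only negative eigenvalues hurt, and their squares sum to at most $2m-\lambda^2$. I would show $\mu\le\sqrt{2m-\lambda^2}$, which holds trivially, and then
\[
6t\ge \lambda^3-(2m-\lambda^2)^{3/2}.
\]
Setting $\lambda=1+\sqrt{m-2}$ with $s=\sqrt{m-2}$, we have $\lambda^2=s^2+2s+1$, so $2m-\lambda^2=s^2-2s+3$. We then need
\[
(1+s)^3-(s^2-2s+3)^{3/2}\ge 6s^2.
\]
Expanding $(s^2-2s+3)^{3/2}\approx (s-1)^3+3(s-1)\cdot\tfrac{2}{2}+\dots$, the left side is about $(1+s)^3-(s-1)^3-3s=6s^2+2-3s+\dots$. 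This is slightly short by a linear term, so the estimate alone does not close.

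The plan is therefore to refine the bound using the positive eigenvalues $\lambda_2,\dots$, which contribute nonnegatively, together with $\sum_i\lambda_i=0$. The negative eigenvalues sum to $-\bigl(\lambda+\sum_{\lambda_i>0,\,i\ge2}\lambda_i\bigr)$. Given the fixed sum of squares $S=2m-\lambda^2$, the cube sum of the negative part is maximized when it is concentrated in one eigenvalue. Bounding the least eigenvalue is the crux, and I would handle it with interlacing as follows. If $\lambda_n^2$ is close to $S$, then almost all of the non-Perron spectral mass sits in $\lambda_n$, and $G$ is nearly complete bipartite-like. In that case the trace of $A^2$ restricted to a large bipartite piece forces either very few edges outside it or a large contribution from the remaining eigenvalues. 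Concretely, I would order the eigenvalues, apply the power-mean inequality to $\{|\lambda_i|:\lambda_i<0,\ i\ne n\}$, and use interlacing with an induced subgraph on the neighbourhood of a max-degree vertex to get $|\lambda_n|\le \lambda-1$ or a comparable shift. Then
\[
6t\ge\lambda^3-(\lambda-1)\,(2m-\lambda^2),
\]
and at $\lambda=1+s$ this equals $(1+s)^3-s(s^2-2s+3)=6s^2+3+\dots$, essentially exactly $6(m-2)$. This strongly suggests that the intended inequality is $\sum_{i\ge2}|\lambda_i|^3\le(\lambda-1)(2m-\lambda^2)$, whenever $t$ is small.

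Finally, monotonicity in $\lambda$ handles $\lambda>1+\sqrt{m-2}$. For equality, all the inequalities must be tight: the non-Perron eigenvalues are $-1$ with multiplicity 2 and $-(\lambda-1)$, up to zeros. That is exactly the spectrum of $K_3\vee kK_1$, and the spectrum determines the graph via the triangle count and structure, giving $k=(m-3)/3$. The main obstacle is justifying $|\lambda_n|\le\lambda-1$, or more precisely the weighted cubic bound. The cases where $G$ is complete bipartite, where $\lambda_n=-\lambda$, are excluded since then $\lambda=\sqrt{m}<1+\sqrt{m-2}$. I would first try a Perron-vector argument showing that $\lambda_n=-\lambda+\epsilon$ forces $G$ bipartite-close and $\lambda\le\sqrt m$.
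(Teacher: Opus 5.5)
Your proposal has a genuine gap at the step you flag, and the proposed fix would not work even if proved. With $s=\sqrt{m-2}$ and $\lambda=1+s$ one has $2m-\lambda^2=s^2-2s+3$ and $\lambda^3-(\lambda-1)(2m-\lambda^2)=(1+s)^3-s(s^2-2s+3)=5s^2+1$, not $6s^2+3+\cdots$. This is below $6(m-2)=6s^2$ for every $m>3$. In fact $\lambda-1=s$ exceeds the trivial bound $\sqrt{2m-\lambda^2}=\sqrt{s^2-2s+3}$ once $s>3/2$, so $|\lambda_n|\le\lambda-1$ is weaker than what you already had.

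More importantly, the scheme $\sum_{i\ge2}|\lambda_i|^3\le\mu\,(2m-\lambda^2)$ is itself too lossy. The extremal graph $K_3\vee kK_1$ has nonzero spectrum $\lambda,-(\lambda-2),-1,-1$. So for $s\ge2$ we get $\mu=\lambda-2=s-1$ and $\mu(2m-\lambda^2)=(s-1)^3+2(s-1)$, which overshoots the true cube sum $(s-1)^3+2$ by $2s-4$. Any bound on $\mu$ in terms of $\lambda$ and $m$ that is valid under the hypothesis must allow the value $\lambda-2$. Hence the best this route can give is $6t\ge6s^2-2s+4<6s^2$ for $s>2$.

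Adding $\sum_i\lambda_i=0$ does not help either. Real sequences with the prescribed first two power sums can place essentially all non-Perron mass on one eigenvalue near $-\sqrt{2m-\lambda^2}$ plus many tiny negative ones. That returns you to the insufficient $\lambda^3-(2m-\lambda^2)^{3/2}$. Structural information about $G$ is indispensable, and your sketch (interlacing on a neighbourhood, a ``bipartite-close'' Perron argument) does not supply it.

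The missing idea is a case split on whether $G$ contains $K_4$.

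If it does, interlacing with the spectrum $(3,-1,-1,-1)$ of $K_4$ gives $\lambda_{n-2},\lambda_{n-1},\lambda_n\le-1$. The hypothesis $m\le\lambda^2-2\lambda+3$ gives $\sum_{i\ge2}\lambda_i^2\le(\lambda-2)^2+2$. Maximizing $\sum a_i^3$ over nonnegative $a_i$ with $\sum a_i^2\le L^2+2$ (where $L=\lambda-2$) and at least three $a_i\ge1$ yields at most $L^3+2$, attained only at $(L,1,1,0,\dots,0)$. Hence $6t\ge\lambda^3-(\lambda-2)^3-2=6(\lambda-1)^2\ge6(m-2)$. The two units of squared mass forced onto eigenvalues $\le-1$ besides $\lambda_n$ contribute only $1$ each to the cube sum instead of $\mu$. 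This is precisely what the bound $\mu(2m-\lambda^2)$ cannot see.

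If $G$ is $K_4$-free, Nikiforov's inequality $t\ge\lambda(\lambda^2-m)/2$ together with $\lambda^2-m\ge2\lambda-3$ gives $t\ge\lambda(2\lambda-3)/2\ge(\lambda-1)^2$.

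Your equality analysis must also be redone. The non-Perron nonzero eigenvalues are $-(\lambda-2),-1,-1$; your $-(\lambda-1),-1,-1$ violates $\tr A=0$. Also, ``the spectrum determines the graph'' is not a valid step. Instead, argue as follows:
\begin{itemize}
\item Having exactly one positive eigenvalue forces the non-isolated part to be complete multipartite.
\item $K_4\subseteq G$ and $t=m-2$ force exactly four parts, since five or more parts give $t\ge m$.
\item With part sizes $1+x_i$, the identity $t-m+2=\sum_{i<j}x_ix_j+\sum_{i<j<\ell}x_ix_jx_\ell$ forces sizes $(s_1,1,1,1)$.
\item The $K_4$-free branch gives equality only for $G=K_3$.
\end{itemize}
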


 \begin{remark} 
A heuristic explanation of these jump phenomena is structural:  
In order to increase the spectral radius 
further above $\sqrt{m}$, 
the extremal graph builds a larger clique into its core, 
and since the clique size is an integer, the number of triangles jumps in discrete steps: the first-layer forces $t(G)\approx \tfrac{1}{2}\sqrt{m}$; the second-layer forces $t(G)\approx \tfrac{1}{2}m$, which is extremized by $K_2\vee \tfrac{m-1}{2}K_1$; and the third-layer forces $t(G)\approx m$, which is extremized by $K_3\vee \tfrac{m-3}{3}K_1$. These results fit into a  layer-by-layer pattern, and   
it is meaningful to investigate the $k$-th 
layer for any $k$; see Figure \ref{fig:jump-layers}.  
 \end{remark}

 \vspace{-4mm}
 \begin{figure}[htbp]
\centering
\definecolor{cliquecol}{RGB}{31,72,135}
\definecolor{indepcol}{RGB}{38,38,44}
\definecolor{joincol}{RGB}{150,162,180}
\definecolor{blobfill}{RGB}{233,239,249}
\definecolor{blobdraw}{RGB}{120,150,200}

\begin{tikzpicture}[
  cv/.style={circle, fill=cliquecol, draw=cliquecol, minimum size=4.4pt, inner sep=0pt},
  iv/.style={circle, fill=indepcol, draw=indepcol, minimum size=4.4pt, inner sep=0pt},
  cedge/.style={draw=cliquecol, line width=1.0pt},
  jedge/.style={draw=joincol, line width=0.5pt},
  blob/.style={draw=blobdraw, line width=0.8pt, fill=blobfill, dash pattern=on 3pt off 2pt},
  lay/.style={font=\small},
  klab/.style={font=\small\bfseries, text=cliquecol},scale=0.85
]
\begin{scope}[shift={(0,0)}]
  \fill[blob] (-0.05,0) ellipse (0.82 and 1.18);
  \coordinate (Ac1) at (0,0.55);   
  \coordinate (Ac2) at (0,-0.55);
  \coordinate (Aw1) at (2.0,1.1);  
  \coordinate (Aw2) at (2.0,0);  
  \coordinate (Aw3) at (2.0,-1.1);
  \foreach \c in {Ac1,Ac2}\foreach \w in {Aw1,Aw2,Aw3}{\draw[jedge] (\c)--(\w);}
  \draw[cedge] (Ac1)--(Ac2);
  \node[cv] at (Ac1){}; \node[cv] at (Ac2){};
  \node[iv] at (Aw1){}; \node[iv] at (Aw2){}; \node[iv] at (Aw3){};
  \node at (2.0,-0.4){$\vdots$};
  \node[klab] at (-0.05,1.5){$K_2$};
  \node[lay] at (0.7,-1.7){$2$nd layer};
\end{scope}
\begin{scope}[shift={(5.2,0)}]
  \fill[blob] (-0.05,0) ellipse (0.82 and 1.18);
  \coordinate (Bc1) at (0,0.72); 
  \coordinate (Bc2) at (-0.42,-0.35); 
  \coordinate (Bc3) at (0.42,-0.35);
  \coordinate (Bw1) at (2.0,1.1); 
  \coordinate (Bw2) at (2.0,0.15);
   \coordinate (Bw3) at (2.0,-1.1);
  \foreach \c in {Bc1,Bc2,Bc3}\foreach \w in {Bw1,Bw2,Bw3}{\draw[jedge] (\c)--(\w);}
  \draw[cedge] (Bc1)--(Bc2)--(Bc3)--(Bc1);
  \node[cv] at (Bc1){}; \node[cv] at (Bc2){}; \node[cv] at (Bc3){};
  \node[iv] at (Bw1){}; \node[iv] at (Bw2){}; \node[iv] at (Bw3){};
  \node at (2.0,-0.4){$\vdots$};
  \node[klab] at (-0.05,1.5){$K_3$};
  \node[lay] at (0.7,-1.7){$3$rd layer};
\end{scope}
\node[font=\Large, text=indepcol] at (8.8,0){$\cdots$};
\begin{scope}[shift={(10.9,0)}]
  \fill[blob] (-0.02,0) ellipse (0.86 and 1.18);
  \coordinate (Cc1) at (0,0.8);    
  \coordinate (Cc2) at (-0.623,0.263);
  \coordinate (Cc3) at (-0.323,-0.688); 
  \coordinate (Cc4) at (0.323,-0.688);
  \coordinate (Cc5) at (0.623,0.263);
  \coordinate (Cw1) at (2.0,1.1); 
  \coordinate (Cw2) at (2.0,0.1); 
  \coordinate (Cw3) at (2.0,-1.1);
  \foreach \c in {Cc1,Cc2,Cc3,Cc4,Cc5}\foreach \w in {Cw1,Cw2,Cw3}{\draw[jedge] (\c)--(\w);}
  \foreach \i/\j in {1/2,1/3,1/4,1/5,2/3,2/4,2/5,3/4,3/5,4/5}{\draw[cedge] (Cc\i)--(Cc\j);}
  \foreach \i in {1,2,3,4,5}{\node[cv] at (Cc\i){};}
  \node[iv] at (Cw1){}; \node[iv] at (Cw2){}; \node[iv] at (Cw3){};
  \node at (2.0,-0.4){$\vdots$};
  \node[klab] at (-0.02,1.5){$K_k$};
  \node[lay] at (0.7,-1.7){$k$-th layer};
\end{scope}
\end{tikzpicture} 
\vspace{-2mm}
\caption{The layer-by-layer jump phenomenon with extremal split graphs.}
\label{fig:jump-layers}
\end{figure}

\vspace{-3mm}
\subsection{An edge-spectral Lov\'{a}sz--Simonovits theorem}

A classical theorem of Lov\'asz and Simonovits \cite{LovaszSimonovits1975,LS1983}
gives a sharp supersaturation for triangles, showing that for any positive integer $q< n/2$, if $G$ is an $n$-vertex graph with size $m\ge \lfloor n^2/4 \rfloor +q$, then $t(G)\ge q\lfloor n/2 \rfloor$; see \cite{Mubayi2010, PY2017, MY2023} for related extensions.   
Intuitively, once an $n$-vertex graph has more edges than the Tur\'{a}n number $\lfloor n^2/4\rfloor$, each additional edge forces at least $\lfloor n/2\rfloor$ additional triangles; this growth accumulates linearly, and the coefficient $\lfloor n/2\rfloor$ is sharp. 
The vertex-spectral counterparts of such triangle counting
results have been studied intensively; see, e.g., \cite{LiFengPeng2026, FLLM2025}. 
It is natural to ask whether the triangle count has a linear accumulation by the spectral surplus $\lambda(G)-\sqrt{m}$. 
The second main result of our paper answers this question and presents an edge-spectral counterpart of the Lov\'{a}sz--Simonovits supersaturation theorem.

\begin{theorem}  \label{thm-edge-spect-LS}
For any real $q> 0$, if $G$ is an $m$-edge graph with $\lambda (G) \ge \sqrt{m}+ q$, then 
$$t(G)> q\cdot m. $$ 
This bound is asymptotically tight as witnessed by the split graphs.
\end{theorem}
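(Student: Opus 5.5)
The plan is to prove the stronger linear inequality
\[
t(G)\ \ge\ m\bigl(\lambda-\sqrt m\,\bigr),\qquad \lambda=\lambda(G),
\]
which the introduction announces as Theorem~\ref{thm:strong}, with equality only for complete bipartite graphs. Theorem~\ref{thm-edge-spect-LS} then follows at once. If $\lambda\ge\sqrt m+q$ with $q>0$, then $t(G)\ge qm$. Moreover $G$ cannot be complete bipartite, because complete bipartite graphs have $\lambda=\sqrt m$ and hence surplus $0$. So the inequality is strict, $t(G)>qm$. For tightness I would use the split graphs $K_k\vee sK_1$. Their spectral radius solves a quadratic explicitly, which gives both $\lambda-\sqrt m$ and the triangle count $\binom k3+s\binom k2$, and I would check that $t(G)/\bigl(m(\lambda-\sqrt m)\bigr)\to 1$ along a suitable family.

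For the linear inequality I would follow the third-moment route indicated in the abstract. Let $\lambda=\lambda_1\ge\lambda_2\ge\dots\ge\lambda_n$ be the eigenvalues of $A(G)$. The moment identities are
\[
6t(G)=\sum_i\lambda_i^3,\qquad \sum_i\lambda_i^2=2m,\qquad \sum_i\lambda_i=0 .
\]
Write $x=\lambda^2-m\ge 0$. Since
\[
6m(\lambda-\sqrt m)=\frac{6m\,x}{\lambda+\sqrt m}\le 3\sqrt m\,x,
\]
it suffices to show
\[
\sum_{\lambda_i<0}|\lambda_i|^3\ \le\ \lambda^3+\sum_{i\ge2,\ \lambda_i>0}\lambda_i^3-3\sqrt m\,x .
\]

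The crude bounds are not enough. The Bollob\'as--Nikiforov bound $\sum_{\lambda_i<0}|\lambda_i|^3\le\lambda(2m-\lambda^2)$ gives only $6t\ge 2\lambda x$. The power-mean bound $\bigl(\sum_{\lambda_i<0}\lambda_i^2\bigr)^{3/2}\le(m-x)^{3/2}$ gives $(m+x)^{3/2}-(m-x)^{3/2}$, which falls short of $3\sqrt m\,x$ at third order in $x$. The extra information must therefore come from the other constraints:
\begin{itemize}
\item the trace condition, that the sum of the negative eigenvalues in absolute value is at least $\lambda$;
\item the bound $\lambda_n^2\le 2m-\lambda^2$;
\item the Cauchy interlacing theorem applied to a suitable induced subgraph, to control how many negative eigenvalues can be large.
\end{itemize}
Concretely, I would maximize $\sum a_j^3$ over nonnegative $a_j$ subject to $\sum a_j^2\le m-x$, $\sum a_j\ge\lambda$ and $a_j\le\sqrt{m-x}$, with the number of large $a_j$ bounded via interlacing. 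At the extremum one $a_j$ is large and the rest are equal. I would then verify that this extremal value is at most $\lambda^3-3\sqrt m\,x$.

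The main obstacle is exactly this optimization. With unboundedly many tiny negative eigenvalues the $\ell^1$ constraint costs almost nothing in $\ell^2$, so the constraints listed above may not suffice. If that happens, I would switch to the Perron-vector approach. Fix the unit Perron vector $\mathbf x$ and use the identity
\[
\lambda^3=\sum_{u}\lambda^2 x_u^2\cdot\lambda=\mathbf x^{T}A^3\mathbf x .
\]
Expand $\mathbf x^{T}A^3\mathbf x$ as walks of length three, separating closed triangles from walks along a single edge. Then bound $\sum_{uv\in E}|N(u)\cap N(v)|$ from below edge by edge using $\lambda^2 x_u=d_ux_u+\sum_w c(u,w)x_w$, where $c(u,w)$ is the number of common neighbours of $u$ and $w$. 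In both approaches the equality analysis should force all slack to vanish, which means the spectrum is $\{\pm\lambda,0,\dots\}$, that is, $G$ is complete bipartite.
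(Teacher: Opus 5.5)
The argument breaks at your reduction $6m(\lambda-\sqrt m)=\frac{6mx}{\lambda+\sqrt m}\le 3\sqrt m\,x$. It replaces the goal by $6t(G)\ge 3\sqrt m\,(\lambda^2-m)$, and this stronger inequality is \emph{false}, so no set of additional spectral constraints can prove it.

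For $K_{10}$ one has $m=45$, $\lambda=9$, $t=120$, so $6t=720<108\sqrt{45}\approx 724.5=3\sqrt m\,x$. In fact the inequality fails for every $K_n$ with $n\ge 10$. It also fails at every ratio $r=\lambda/\sqrt m\in(1,\sqrt2\,]$ along the split graphs $K_{a+1}\vee sK_1$ with $a\approx c\sqrt m$. For these, $6t/m^{3/2}\to r^3-s^3$ with $s=\sqrt{2-r^2}$, whereas $r^3-s^3<3(r^2-1)$. To see the last inequality, divide by $r-s$ and set $p=r+s\in[\sqrt2,2)$; it reduces to $(p-1)(p-2)<0$.

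The true spectra of these graphs satisfy the trace condition, $\lambda_n^2\le 2m-\lambda^2$, and whatever interlacing constraints hold. So your optimization must return a value larger than $\lambda^3-3\sqrt m\,x$. The Perron-vector fallback is not tied to any specific inequality, and it cannot rescue a false statement either. The third-order shortfall you found is a symptom of your relaxation, not a sign that the power-mean bound is too weak.

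Keep the exact target instead. The power-mean bound you set aside is precisely what suffices, and it is the paper's route. The inequality $\sum_{i\ge2}\lambda_i^3\ge-\bigl(\sum_{i\ge2}\lambda_i^2\bigr)^{3/2}$ gives $6t\ge\lambda^3-(2m-\lambda^2)^{3/2}=m^{3/2}\bigl(r^3-(2-r^2)^{3/2}\bigr)$ with $1\le r<\sqrt2$. It then remains to show $h(r):=r^3-(2-r^2)^{3/2}-6(r-1)\ge 0$. This follows from $h(1)=0$ and $h'(r)=3\bigl(r^2+r\sqrt{2-r^2}-2\bigr)\ge0$, since $r\ge 1\ge\sqrt{2-r^2}$ gives $r\sqrt{2-r^2}\ge 2-r^2$, strictly for $1<r<\sqrt2$.

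Near $r=1+\epsilon$ one has $h(r)=3\epsilon^2-\epsilon^3+O(\epsilon^4)$. Your relaxation discards exactly $3\sqrt m\,(\lambda-\sqrt m)^2=3\epsilon^2m^{3/2}$, i.e.\ $3\epsilon^2$ in the same normalization. That is why you came out $\epsilon^3$ short.

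Strictness for $q>0$ is then immediate from $h(r)>0$ for $r>1$. Alternatively, as in the paper, it follows from the equality case. Equality in the power-mean step leaves at most one nonzero non-principal eigenvalue, necessarily negative, so the spectrum is $\{\lambda,-\lambda,0,\dots,0\}$. Then $G$ is complete bipartite, contradicting $\lambda>\sqrt m$.

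Your deduction of the theorem from the linear bound is fine. So is your tightness check via $K_k\vee sK_1$ with fixed $k$.
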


  We point out that Theorem \ref{thm-edge-spect-LS} is applicable to sparse graphs, e.g., the split graphs, whereas the Lov\'{a}sz--Simonovits theorem is only applicable to those graphs with nearly quadratic edge density.  
Incidentally, one should not expect an exact equality case in Theorem \ref{thm-edge-spect-LS} for a general real $q>0$. 
Unlike the number of edge, which takes integer values, the spectral
radius $\lambda(G)$ is the largest root of the characteristic polynomial of the adjacency matrix, and it is usually irrational. Hence, a threshold of the form $\sqrt{m}+q$ cannot in general be exactly achieved by the spectral radius of a graph; it can only be approximated. This partly explains why Theorem~\ref{thm-edge-spect-LS} is stated in an 
asymptotically tight form, in contrast with the exact Lov\'{a}sz--Simonovits theorem.  

\smallskip 
We now state a stronger version of Theorem \ref{thm-edge-spect-LS} as follows. 

\begin{theorem} \label{thm:strong}
Let $G$ be an $m$-edge graph with spectral radius $\lambda$.   
Then
\[
    t(G)\geq m\bigl(\lambda -\sqrt m\, \bigr),
\]
with equality if and only if $G$ is a complete bipartite graph. 
\end{theorem}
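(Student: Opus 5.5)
The plan is to run the third-moment argument with a sharper control of the negative eigenvalues than the one behind the Bollob\'as--Nikiforov bound (\ref{eq:BN}), and to fall back on (\ref{eq:BN}) itself when $\lambda$ is large. Write $\lambda=\lambda_1\ge\lambda_2\ge\cdots\ge\lambda_n$ for the eigenvalues of $A(G)$. We use the two identities $6t(G)=\sum_i\lambda_i^3$ and $\sum_i\lambda_i^2=2m$.

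\textbf{Trivial range.} If $\lambda<\sqrt m$, the right-hand side $m(\lambda-\sqrt m)$ is negative. The inequality then holds strictly, since $t(G)\ge 0$. So from now on assume $\lambda\ge\sqrt m$.

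\textbf{Large $\lambda$.} Inequality (\ref{eq:BN}) gives $3t\ge\lambda(\lambda-\sqrt m)(\lambda+\sqrt m)$. This is at least $3m(\lambda-\sqrt m)$ as soon as $\lambda(\lambda+\sqrt m)\ge 3m$, i.e.\ $\lambda\ge\frac{\sqrt{13}-1}{2}\sqrt m\approx1.30\sqrt m$. In particular it covers the whole range $\lambda\ge\sqrt{2m}$.

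\textbf{Main range $\sqrt m\le\lambda\le\sqrt{2m}$.} Drop the positive terms among $\lambda_2,\dots,\lambda_n$ to get
\[
6t\ \ge\ \lambda^3-\sum_{\lambda_i<0}|\lambda_i|^3\ \ge\ \lambda^3-|\lambda_n|\sum_{\lambda_i<0}\lambda_i^2 .
\]
From $\sum_{i\ge2}\lambda_i^2=2m-\lambda^2$ we get both $\sum_{\lambda_i<0}\lambda_i^2\le 2m-\lambda^2$ and $|\lambda_n|\le\sqrt{2m-\lambda^2}$. Therefore
\[
6t\ \ge\ \lambda^3-(2m-\lambda^2)^{3/2}.
\]
The standard proof of (\ref{eq:BN}) only uses $|\lambda_n|\le\lambda$, and that loses exactly in the regime near $\sqrt m$ which we need. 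The cruder form $\sum_{\lambda_i<0}|\lambda_i|^3\le(\sum_{\lambda_i<0}\lambda_i^2)^{3/2}$, a power-mean estimate, gives the same bound.

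\textbf{Reduction to one variable.} Normalize $\lambda=s\sqrt m$ with $s\in[1,\sqrt2]$. It then suffices to show
\[
g(s):=s^3-(2-s^2)^{3/2}-6(s-1)\ \ge\ 0 \quad\text{on } [1,\sqrt2],
\]
with equality only at $s=1$. The local behaviour at $s=1$ is favourable:
\begin{itemize}
\item $g(1)=0$;
\item $g'(s)=3s^2+3s\sqrt{2-s^2}-6$, so $g'(1)=0$;
\item $g''(1)=6+3-3=6>0$.
\end{itemize}
At the other endpoint, $g(\sqrt2)=2\sqrt2-6\sqrt2+6>0$.

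The main obstacle is this calculus step: we must show $g$ has no other zero in $(1,\sqrt2]$. The first thing to try is to prove $g'(s)\ge0$ there. That amounts to $s\sqrt{2-s^2}\ge 2-s^2$, i.e.\ $s\ge\sqrt{2-s^2}$, which holds precisely because $s\ge1$. Hence $g'\ge 0$ on $[1,\sqrt2]$, with equality only at $s=1$. So $g$ is increasing and $g(s)>0$ for $s>1$. The overlap with the large-$\lambda$ range is harmless.

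\textbf{Equality.} By the analysis above, equality forces $\lambda=\sqrt m$ and hence $t(G)=0$. A triangle-free graph with $\lambda=\sqrt m$ is complete bipartite: this is the equality case of Nosal's theorem, also contained in the equality characterization of (\ref{eq:BN}) by Ning and Zhai. Conversely, a complete bipartite graph $K_{a,b}$ has $\lambda=\sqrt{ab}=\sqrt m$ and no triangles, so equality holds.
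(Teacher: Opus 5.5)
Your proposal is correct and is essentially the paper's proof: the same third-moment bound $6t(G)\ge\lambda^3-(2m-\lambda^2)^{3/2}$ (your $|\lambda_n|\sum_{\lambda_i<0}\lambda_i^2$ estimate gives the same bound as the paper's power-mean step), followed by the same one-variable inequality proved via $g'(s)\ge0$. Two small notes: the Bollob\'as--Nikiforov case is redundant, since $\lambda^2\le 2m$ always; and $g'(\sqrt2)=0$ as well, which is harmless because $g'>0$ on $(1,\sqrt2)$.

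The only real difference is the equality case. You reduce to $\lambda=\sqrt m$, $t(G)=0$ and cite the known equality characterization (Nosal/Ning--Zhai). The paper instead argues self-containedly: tightness of the power-mean step leaves at most one nonzero, negative, non-principal eigenvalue, and its rank-two lemma then gives a complete bipartite graph.
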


\begin{remark}   
We compare Theorem~\ref{thm:strong} with the Bollob\'{a}s--Nikiforov bound (\ref{eq:BN}). Both contain the factor $\lambda-\sqrt m$, 
with remaining coefficients $m$ and
$\frac13\lambda(\lambda+\sqrt m\,)$, respectively. 
Upon computation, Theorem~\ref{thm:strong} gives a better bound in the range $ \sqrt m\le \lambda\le \frac{\sqrt{13}-1}{2}\sqrt m\approx 1.3\sqrt{m}$, 
and the Bollob\'{a}s--Nikiforov bound is better in the remaining range $1.3\sqrt{m} < \lambda <  \sqrt{2m}$. 
The coefficient $\tfrac13\lambda(\lambda+\sqrt m\,)$
decays rapidly as $\lambda$ decreases from $\sqrt{2m}$, whereas our linear
coefficient $m$ is independent of $\lambda$ and hence remains effective in the
low spectral radius regime. 
In this regime $\lambda=\sqrt m+q$ with $q=o(\sqrt m\,)$, which can be realized by the split graphs,  
 Theorem \ref{thm:strong} gives the tight bound  $t(G)\ge qm$, 
improving the Bollob\'{a}s--Nikiforov bound  
$t(G)\ge \big(\frac23+o(1)\big)qm$ by a factor of $\frac{3}{2}$.  
\end{remark}

\subsection{Finding a large book in Nosal graphs}

A \emph{book of size $k$} is a graph $B_k$ consisting of $k$ triangles sharing a common edge, and $\bk(G)$ denotes the largest $k$ such that $G$ contains a copy of $B_k$. 
In 1962, Erd\H{o}s \cite{Erd1962} conjectured that any $n$-vertex graph $G$ with $m>n^2/4$ satisfies \( \bk (G) >  n/6 \). 
This was later confirmed by Edwards \cite{Edw1977} and independently by Khad\v{z}iivanov and Nikiforov  \cite{KN1979}. 
The manuscript of Edwards was never published, and the paper of Khad\v{z}iivanov and Nikiforov is not easily accessible;  
two different proofs of this result were  provided by Bollob\'{a}s and Nikiforov \cite{BN05}, and  Li, Feng and Peng \cite{LFP2024-triangular}. 
A large book is a much stronger conclusion than a single triangle: while Nosal's theorem only
guarantees one triangle, a book of size $q$ shows that $q$ triangles sit on a single
edge, which reveals a locally dense substructure of a graph; we refer the readers to \cite{Mubayi2012,CFS2020,ZL2022jgt,LiuMiao2025,MiaoLiuDam2026}. 

\smallskip  
The problem of forcing a large book under the spectral condition was initially raised by Zhai, Lin and Shu \cite{ZLS2021} as a variant of their study of  $K_{2,t}$, and then investigated by Nikiforov \cite{Nikiforov21}.  
First, it is interesting because the spectral statement, in its conjectured form $\bk (G)> \frac13\sqrt{m}$ stated in \cite{LiLiuZhang26}, 
would imply the classical result $\bk(G)>\frac16 n$ for every graph with $m>\frac{1}{4} n^2$, 
which is the Edwards/Khad\v{z}ivanov--Nikiforov theorem. Thus, the conjectured spectral bound would strengthen the classical one. 
Second, the conjectured constant $\frac13$ is best possible. Indeed, we define $C_3^{\square}$ to be the triangular prism, consisting of two vertex-disjoint triangles joined by a perfect matching. Let $G$ be the blow-up of $C_3^{\square}$ obtained by replacing each vertex of the upper triangle with an independent set of size $k+1$, each vertex of the lower triangle with an independent set of size $k-1$,
and each edge with a complete bipartite graph. Then $G$ is of order $n=6k$, and size $m=9k^2+3$, while $\bk(G)=k+1< \frac13\sqrt{m}+1$. The graph $G$ is 
Nosal since $\lambda (G)\ge \frac{2m}{n} =3k + \frac{1}{k} > \sqrt{m}$.  
Motivated by this sharp construction, Li, Liu
and Zhang \cite{LiLiuZhang26} proposed the following problem.

\begin{problem}[Li--Liu--Zhang \cite{LiLiuZhang26}] \label{prob-bk-13}
Does every $m$-edge Nosal graph have a book of size $\frac{1}{3}\sqrt{m}$? 
\end{problem}

Progress on Problem \ref{prob-bk-13} 
comes from improving the coefficient. 
Using the light-edge deletion method, 
Nikiforov \cite{Nikiforov21} proved $\bk(G)>\frac1{12}m^{1/4}$. 
The order was raised to the tight 
$\Omega (\sqrt{m})$ scale by Li, Liu and Zhang \cite{LiLiuZhang26}, who obtained   $\bk(G)>\frac1{24}\sqrt{m}$ by using a probabilistic argument. 
Zhai, Li and Lou \cite{ZhaiLiLou26} improved the constant to $\frac19$.  
Our third main result improves the constant to $\frac{1}{4}$. 

\begin{theorem} \label{thm-size-14}
Every $m$-edge Nosal graph $G$ satisfies
$$ \bk(G)>\frac14\sqrt{m}. $$
\end{theorem}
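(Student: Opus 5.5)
We argue by contradiction. Suppose $G$ is Nosal with $\bk(G)\le k$, where $k\le \frac14\sqrt m$; we aim to show $\lambda^2\le m$. Following the framework announced in the abstract, let $\lambda=\lambda(G)$ and let $x$ be the Perron vector, normalized so that its maximum coordinate is $x_u=1$ at some vertex $u$. Write $N=N(u)$, $d=|N|$, and $R=V\setminus(N\cup\{u\})$.

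\textbf{The exact identity.} The eigen-equations give $\lambda^2 x_u=\sum_{v\in N}\sum_{w\sim v}x_w$. Splitting the inner sum according to whether $w=u$, $w\in N$ or $w\in R$ yields
\[
\lambda^2 = d + 2\sum_{vw\in E(N)}(x_v+x_w) + \sum_{vw\in E(N,R)} x_w .
\]
Every edge of $G$ is either incident to $u$, inside $N$, between $N$ and $R$, or inside $R$. Subtracting $m$ therefore gives the surplus identity
\[
\lambda^2-m=\sum_{vw\in E(N)}\bigl(2x_v+2x_w-1\bigr)-\sum_{vw\in E(N,R)}(1-x_w)-e(R).
\]
So $\lambda^2>m$ forces the triangle weight inside $N$ to dominate the missing weights $1-x_w$ on $N$--$R$ edges, plus the edges of $R$.

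\textbf{Using book-freeness.} Each edge $uv$ with $v\in N$ lies in $d_N(v)$ triangles, so $d_N(v)\le k$. More generally, for each $vw\in E(N)$ the common neighbourhood has size at most $k$, which also bounds the $R$-neighbours shared by $v$ and $w$. To bound $\sum_{vw\in E(N)}(x_v+x_w)=\sum_{v\in N}d_N(v)x_v$, I would use $\lambda x_v=\sum_{w\sim v}x_w\le 1+d_N(v)+\sum_{w\in N_R(v)}x_w$. The key estimate is a \emph{half-missing-weights} bound: an $R$-neighbour $w$ of $v$ contributes $x_w$ to $\lambda x_v$ but is charged $1-x_w$ in the surplus identity. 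Pairing these via $x_w+(1-x_w)=1$ and averaging, at least half of the missing weight should survive. Combining this with $d_N(v)\le k$, the positive term should come out at most about $\frac{2k}{\lambda}\bigl(\cdots\bigr)$, and the final comparison should read roughly $\lambda^2-m\le (4k/\lambda-1)\cdot(\text{something nonnegative})$. With $\lambda>\sqrt m\ge 4k$ this makes the right side nonpositive, a contradiction. The constant $4$ comes from the factor $2$ in $2x_v+2x_w$ together with the factor $2$ lost in the half-missing-weight pairing.

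\textbf{Main obstacle.} The hard step is making the half-missing-weights estimate precise. One must bound $\sum_{v\in N}d_N(v)x_v$ by the missing weights $\sum_{E(N,R)}(1-x_w)$ and $e(R)$ in a way that uses only $d_N(v)\le k$ and $\lambda>\sqrt m$. Vertices $v\in N$ with many $R$-neighbours of large weight are problematic, since then $x_v$ can be near $1$ without paying much missing weight.

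For these vertices I would try the following. Double-count $\lambda^2 x_v$, using that each $w\in R$ with $x_w$ large has $\lambda x_w$ bounded by its own neighbourhood. Alternatively, apply the identity to $\lambda^2 x_v$ at $v$ rather than at $u$, and use $x_v\le 1$ to absorb the error. If this fails, a fallback is to weaken the target to $\bk(G)>c\sqrt m$ for a slightly smaller $c$ via a cruder bound $x_w\le 1$, and then tighten.
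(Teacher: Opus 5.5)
There is a genuine gap, and it starts with your identity. In $\sum_{v\in N}\sum_{w\sim v}x_w$ each edge $vw\in E(N)$ is met once from each endpoint, so it contributes $x_v+x_w$, not $2(x_v+x_w)$. For $K_3$ your first display gives $\lambda^2=6$. The correct identity is $\lambda^2-m=\sum_{vw\in E(N)}(x_v+x_w-1)-\Phi$ with $\Phi=\sum_{vy\in E(N,R),\,y\in R}(1-x_y)+e(R)$.

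The error is not cosmetic. In your version an edge of $N$ with $x_v=1$ and $x_w$ near $0$ contributes about $1$, and nothing in book-freeness can pay for it. Concretely, let $G$ be $K_{16t,t}$ plus one edge inside the part of size $16t$:
\begin{itemize}
\item $G$ is $B_{t+1}$-free;
\item $\lambda^2=16t^2+\frac{2t}{\lambda-1}$, so $4t<\lambda<4t+1$ and $\lambda^2<m$;
\item the maximum weight $1$ sits in the part of size $t$, and both ends of the added edge have weight $\frac{t}{\lambda-1}$.
\end{itemize}
Your right-hand side then equals $\frac{4t}{\lambda-1}-1>0$. So the inequality you set out to prove is false: your right-hand side need not be $\le 0$ under $B_{k+1}$-freeness and $\lambda\ge 4k$. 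Your explanation of the constant $4$ also rests on this spurious factor.

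Even with the identity corrected, the decisive step is left open, and pairing $x_y$ with $1-x_y$ along existing $N$--$R$ edges is not the mechanism. The missing idea is to convert $\Phi$ into the weight of \emph{non-edges} between $N$ and $R$, using the eigen-equation at vertices of $R$. This is how the paper proves Theorem~\ref{thm:book-free}, which gives the statement.
\begin{itemize}
\item For $y\in R$ let $X(y)=\sum_{v\in N\setminus N(y)}x_v$. Since $\sum_{v\in N}x_v=\lambda$, one has $\lambda x_y=(\lambda-X(y))+\sum_{z\in N_R(y)}x_z$.
\item Using $d_N(y)\ge\lambda-X(y)$ and $d_R(y)\ge\sum_{z\in N_R(y)}x_z$, this gives $d_N(y)(1-x_y)+\frac12 d_R(y)\ge\frac12 x_yX(y)$.
\item Summing over $y$ gives $\Phi\ge\frac12\sum_{v\in N}x_vY(v)$, where $Y(v)=\sum_{y\in R\setminus N(v)}x_y$.
\end{itemize}

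This disposes of exactly the vertices you flag. Take $vw\in E(N)$ with $x_v+x_w>1$. The eigen-equation at $w$, together with $d_N(w)\le k$ and $|N_R(v)\cap N_R(w)|\le k-1$ (the vertex $u$ is already a common neighbour), gives $Y(v)\ge\lambda x_w-2k$. So heavy $R$-neighbours of $w$ are charged as non-edges at the \emph{other} endpoint.

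For $k\ge 1$, use the inequality $a+b-1\le\frac12\bigl(a(4b-2)_{+}+b(4a-2)_{+}\bigr)$ for $a,b\in[0,1]$ with $a+b>1$. Together with $\lambda\ge 4k$ it yields $x_v+x_w-1\le\frac{1}{2k}\bigl(x_vY(v)+x_wY(w)\bigr)$. Sum over these edges; the other edges of $N$ contribute $\le 0$. Using $d_N\le k$, you get $\sum_{vw\in E(N)}(x_v+x_w-1)\le\frac12\sum_{v}x_vY(v)\le\Phi$, i.e.\ $\lambda^2\le m$.

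The $4$ comes from the case $x_v=1$ with $x_w$ small. There one needs $(\lambda-2k)/(2k)\ge 1$: the loss $2k$ in the local bound must beat the factor $\frac12$ in the half-missing estimate.
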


As an application of Theorem \ref{thm-size-14}, 
we turn to the problem of counting  triangular edges. 
We say that an edge is \emph{triangular} if it lies in a triangle.
Extending Mantel's theorem, Erd\H{o}s, Faudree and Rousseau \cite{EFR1992}
proved that every $n$-vertex graph with more than $\lfloor n^2/4\rfloor$ edges
contains at least $2\lfloor n/2\rfloor +1$ triangular edges. We refer to \cite{FM2017, GL2018, GHV2019} for related results. 
Recently, Li, Feng and Peng \cite{LFP2024-triangular} studied the spectral 
problem of counting triangular edges and established a spectral version of
the Erd\H{o}s--Faudree--Rousseau theorem. Moreover, they proposed the
following conjecture.

\begin{conjecture}[Li--Feng--Peng \cite{LFP2024-triangular}]
\label{thm-main2}
If $G$ is a graph with $m$ edges and $\lambda (G)\ge \sqrt{m}$, then $G$ contains at least $\sqrt{m}$ triangular edges, unless $G$ is a complete bipartite graph. 
\end{conjecture}

If Conjecture~\ref{thm-main2} holds, then the bound $\sqrt{m}$ is best
possible. 
For positive integers $s,t\ge 2$, 
let $K_{s,t}^{+}$ denote the
graph obtained from $K_{s,t}$ by adding one
edge inside the part of size $s$. Taking $m$ as an integer such that $\sqrt{m}$ is odd, and setting  $s:=2(\sqrt{m}+1)$ and $t:=\tfrac{1}{2}(\sqrt{m}-1)$, one can verify that $K_{s,t}^+$ satisfies $\lambda (K_{s,t}^+) = \sqrt{m}$. 
The added edge of $K_{s,t}^+$  
together with the $t$ vertices in the opposite
part span a book of size $t$, which   
yields exactly $2t+1$ triangular edges. So $K_{s,t}^+$ contains exactly $2t+1=\sqrt{m}$ triangular edges. Thus, the conjectured bound is best possible. 

\medskip 
Theorem~\ref{thm-size-14} implies a weak form of
Conjecture~\ref{thm-main2}, losing only a factor of two. 

\begin{corollary}
Every $m$-edge Nosal graph contains more than $\tfrac{1}{2}\sqrt{m}$
triangular edges. 
\end{corollary}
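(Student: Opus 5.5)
The corollary should follow directly from Theorem~\ref{thm-size-14}. Let $G$ be an $m$-edge Nosal graph. By Theorem~\ref{thm-size-14}, $G$ contains a book $B_k$ with $k=\bk(G)>\frac14\sqrt{m}$. Such a book consists of a spine edge $uv$ and $k$ distinct common neighbours $w_1,\dots,w_k$ of $u$ and $v$. I would count the triangular edges inside this book.

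The spine $uv$ lies in every triangle $uvw_i$, so it is triangular. Each edge $uw_i$ and $vw_i$ lies in the triangle $uvw_i$, so all of them are triangular too. These $2k+1$ edges are pairwise distinct, because the $w_i$ are distinct and differ from $u$ and $v$. Hence $G$ has at least $2k+1$ triangular edges.

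The estimate is then $2k+1>2k>\frac12\sqrt{m}$, which already gives the strict inequality. No step is a real obstacle; the only point to check is that the $2k$ side edges are distinct from each other and from $uv$, which is immediate from the definition of a book.
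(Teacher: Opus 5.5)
Your proposal is correct and matches the paper's intended argument: Theorem~\ref{thm-size-14} gives a book $B_k$ with $k=\bk(G)>\tfrac14\sqrt{m}$, and its $2k+1$ edges are all triangular (the same count the paper uses for $K_{s,t}^+$), so there are more than $\tfrac12\sqrt{m}$ triangular edges. Your check that the spine and the $2k$ side edges are pairwise distinct is the only detail needed, and you handled it.
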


For Theorem \ref{thm-size-14}, 
we in fact prove the following slightly stronger statement. 

\begin{theorem} \label{thm:book-free}
If $G$ is a $B_{r+1}$-free graph with $m$ edges, where $m\ge (4r)^2$, then
$$ \lambda(G)\le\sqrt{m}, $$
with equality if and only if $G$ is a complete bipartite
graph.
\end{theorem}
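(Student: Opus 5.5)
Let $x$ be the unit Perron vector of $A(G)$, $\lambda=\lambda(G)$, and $u$ a vertex with $x_u=\max_v x_v$. We argue by contradiction: assume $G$ is $B_{r+1}$-free, $m\ge (4r)^2$, and $\lambda\ge\sqrt m$. The goal is to show that then $\lambda=\sqrt m$ and $G$ is complete bipartite. Write $N=N(u)$, $d=|N|$, and $R=V\setminus(N\cup\{u\})$.

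\textbf{Step 1: an exact identity.} From $\lambda^2x_u=\sum_{v\in N}d_N(v)x_v+\sum_{w\in R}d_N(w)x_w+d\,x_u$ we subtract $m x_u$. We split $m=e(N)+e(N,R)+e(R)+d$ and charge each edge with the coordinate it would need. This gives
\[
(\lambda^2-m)x_u=\sum_{v\in N}d_N(v)x_v-e(N)x_u-\sum_{vw\in E(N,R)}(x_u-x_w)-e(R)x_u .
\]
The terms involving $R$ are nonpositive since $x_w\le x_u$. It is cleaner to expand $\sum_{v\in N}d_N(v)x_v=\sum_{vv'\in E(N)}(x_v+x_{v'})$, which yields
\[
0\le(\lambda^2-m)x_u\le\sum_{vv'\in E(N)}\bigl(x_v+x_{v'}-x_u\bigr)-\sum_{vw\in E(N,R)}(x_u-x_w)-e(R)x_u .
\]
Thus the edges inside $N$ (the triangles through $u$) must pay for all the missing weight.

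\textbf{Step 2: use book-freeness inside $N$.} Each edge $uv$ lies in at most $r$ triangles, so $d_N(v)\le r$ for every $v\in N$. Hence $e(N)\le rd/2$, and $\sum_{vv'\in E(N)}(x_v+x_{v'}-x_u)\le\sum_{v\in N}d_N(v)(x_v-x_u/2)$. Only vertices with $x_v>x_u/2$ contribute positively; this is the ``half-missing-weights'' estimate. Let $H=\{v\in N:x_v>x_u/2\}$. The positive part is then at most $r\sum_{v\in H}(x_v-x_u/2)\le \tfrac r2 |H|x_u$.

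\textbf{Step 3: bound $|H|$ and get the contradiction.} We must show that the negative terms, namely $\sum_{E(N,R)}(x_u-x_w)+e(R)x_u$, together with the deficit from vertices in $N\setminus H$, exceed $\tfrac r2|H|x_u$ unless $e(N)=0$. For $v\in H$ we have $\lambda x_u/2<\lambda x_v=\sum_{w\sim v}x_w\le d_v x_u$, so $d_v>\lambda/2\ge\sqrt m/2\ge 2r$. Each such $v$ has at most $r$ neighbours in $N$ and one neighbour $u$. So $v$ sends more than $\sqrt m/2-r-1$ edges into $R$, and each of these edges carries deficit $x_u-x_w$. I expect the main obstacle here: these deficits can be small if $R$ contains high-weight vertices. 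To handle that case, we pair high-weight vertices $w\in R$ with their own neighbourhoods, again applying the eigen-equation $\lambda x_w\le d_wx_u$. Alternatively, we can re-run the identity with $\lambda^2x_u=\sum_v|N(u)\cap N(v)|x_v$ and use that the codegree of $u$ with any neighbour is at most $r$. Using $m\ge(4r)^2$, i.e.\ $r\le\sqrt m/4$, the total deficit should be strictly larger than the surplus $\tfrac r2|H|x_u$ whenever $E(N)\neq\emptyset$.

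\textbf{Step 4: equality.} If $E(N)=\emptyset$, the identity forces $\lambda^2=m$, $e(R)=0$, and $x_w=x_u$ for every $w\in R$ adjacent to $N$. This is the Nosal equality configuration. By Nosal's theorem, or the Ning--Zhai characterization of equality in \eqref{eq:BN} with $t(G)=0$, $G$ is then complete bipartite.
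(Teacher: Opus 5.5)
Your Step 1 identity is correct. With $x_u=1$ it is exactly the paper's surplus identity $\lambda^2-m=\sum_{vv'\in E(N)}(x_v+x_{v'}-1)-\Phi$, where $\Phi=\sum_{vw\in E(N,R)}(1-x_w)+e(R)$. But Step 3, which carries the entire content of the theorem, is not proved. You name the obstacle (heavy vertices of $R$ make the per-edge deficits $x_u-x_w$ negligible) and then only assert that the deficit ``should'' win.

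Worse, the target you set in Step 2 is false, because $\frac r2|H|x_u$ charges $r$ to every $v\in H$ whether or not $v$ meets an edge of $N$. Here is a counterexample. Take $K_{3t,t}$ ($t$ odd) with $u$ on the side $T$ of size $t$, and split $T\setminus\{u\}$ into halves $T_1,T_2$. Pick $v,v'$ on the large side, delete the edges from $v$ to $T_2$ and from $v'$ to $T_1$, and add $vv'$. The only triangle is $uvv'$, so $G$ is $B_2$-free with $m=3t^2-t+2\ge 16$. The vertex $u$ has the largest coordinate, and every vertex of $N\setminus\{v,v'\}$ has coordinate about $x_u/\sqrt3>x_u/2$. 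So with $r=1$ you would need a deficit of about $\frac32 t\,x_u$. However, $e(R)=0$ and each $w\in R$ has $x_u-x_w\in\{x_v/\lambda,\,x_{v'}/\lambda\}$ with $x_v=x_{v'}\approx x_u/(2\sqrt3)$. Hence the total deficit, including the $O(1)$ coming from $v,v'\in N\setminus H$, is only about $\frac12 t\,x_u$. So Step 3 cannot be proved as you state it. The surplus must be compared with the deficit edge by edge, and only over bad edges ($x_v+x_{v'}>x_u$).

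What is missing is a way to convert missing weight in $R$ into deficit even when the vertices of $R$ are heavy. This is exactly what the paper's half-missing-weights estimate does; note that it is not the threshold $x_v>x_u/2$ from your Step 2. Normalize $x_u=1$, and set $Y(v)=\sum_{w\in R\setminus N(v)}x_w$ for $v\in N$ and $X(w)=\sum_{z\in N\setminus N(w)}x_z$ for $w\in R$.

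\begin{itemize}
\item \emph{Perron equation at $w\in R$.} Since $\sum_{z\in N}x_z=\lambda$, it reads $\sum_{z\in N(w)\cap R}x_z=X(w)-\lambda(1-x_w)$. So a heavy $w$ that misses weight in $N$ is forced to have neighbours in $R$, and these pay through $e(R)$. This gives $d_N(w)(1-x_w)+\frac12 d_R(w)\ge\frac12x_wX(w)$, and summing yields $\Phi\ge\frac12\sum_{v\in N}x_vY(v)$.
\item \emph{Codegree of the bad edge itself.} For a bad edge $vv'\in E(N)$ you must use $|N(v)\cap N(v')\cap R|\le r-1$; you only used the codegree of $uv$. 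Together with the Perron equation at $v$, this gives $Y(v')\ge\lambda x_v-2r$, and symmetrically for $Y(v)$.
\item \emph{Combining.} The inequality $a+b-1\le\frac12\bigl(a(4b-2)_++b(4a-2)_+\bigr)$ for $a,b\in[0,1]$ with $a+b>1$, together with $\lambda\ge 4r$, yields $x_v+x_{v'}-1\le\frac1{2r}\bigl(x_vY(v)+x_{v'}Y(v')\bigr)$. Summing over bad edges, with multiplicity $d_N(v)\le r$, gives $\sum_{E(N)}(x_v+x_{v'}-1)\le\frac12\sum_{v\in N}x_vY(v)\le\Phi$, hence $\lambda^2\le m$.
\end{itemize}

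Finally, your Step 4 only treats $E(N)=\emptyset$. Ruling out $E(N)\ne\emptyset$ when $\lambda^2=m$ needs an actual strictness argument, which you assert but do not give. The paper does it as follows: there are then no bad edges, so $\Phi=0$. This forces $R$ to be complete to $N$ with all coordinates equal to $1$, and $|R|+1\le r$. The Perron equations then give $\lambda\le 2(|R|+1)+r\le 3r<4r$, a contradiction.
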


 Theorem~\ref{thm:book-free} fits into the Brualdi--Hoffman--Tur\'{a}n type
problems: for a forbidden subgraph $F$, one asks how large $\lambda(G)$
can be among all $F$-free graphs with $m$ edges, and which graphs attain the
maximum. 
This problem has received considerable
attention; see, e.g.,  
\cite{ZLS2021, LZS2024, LZZ2025jgt, LLLY2026aam, LLZ-edge-spectral}.

\subsection{Counting kites in Nosal graphs}

Beyond triangles, it is important to count other  substructures in Nosal graphs. 
 Ning and Zhai \cite{NZ2021b} studied the spectral supersaturation of $4$-cycles. 
Theorem~\ref{thm-size-14} shows that every Nosal graph contains not just a $C_3, C_4$ but a
copy of $B_2$. This makes $B_2$
the natural candidate to count. Let $C_4^+$ be the \emph{kite} obtained from $C_4$ by adding a chord (center edge); equivalently, $C_4^+$ is the book $B_2$. 
Here, copies of \(C_4^+\) are counted in the
usual non-induced sense. 
Since $C_4^+$ is color-critical (deleting the chord leaves a bipartite
$C_4$), a general result of Mubayi \cite{Mubayi2010} implies that 
every $n$-vertex graph with $m> \lfloor n^2/4\rfloor$ edges contains at least ${\lfloor n/2 \rfloor \choose 2}$ copies of $C_4^+$. However, 
this is an order-based count governed by the edge bound $\lfloor n^2/4\rfloor$.
The spectral condition $\lambda(G)>\sqrt{m}$ is of a different nature and is measured in terms of the number of edges $m$.   
Along this line, Li, Liu and Zhang~\cite{LiLiuZhang26} obtained the correct order of magnitude, proving that every Nosal graph contains $\Omega(m)$ copies of $C_4^+$. 
Recently, Fang, Lin and Zhai~\cite{FangLinZhai} studied edge-spectral
supersaturation results for color-critical graphs of chromatic number at least
$4$. The kite $C_4^+$ has chromatic number $3$, so this case falls outside the scope of their results, and this is the situation we treat here.

\smallskip 

As a direct application, Theorem~\ref{thm-size-14} already gives at least
$\big(\frac{1}{32}-o(1) \big)m$ copies of $C_4^+$ in every $m$-edge Nosal graph, since any two triangles in a largest
book form a kite. 
The fourth main result of this paper determines the 
optimal multiplicative constant $\tfrac18$.  

\begin{theorem} \label{thm-C4+}
Every $m$-edge Nosal graph $G$ contains at least 
$\left( \frac{1}{8} - o(1)\right) m$ copies of $C_4^+$. Moreover, the constant $\frac{1}{8}$ is best possible. In other words, we have 
\[  \lim\limits_{m \to \infty} \min_{\lambda (G)> \sqrt{m}} \frac{\texttt{\#}\,C_4^+(G)}{m} = \frac{1}{8}.  \]
\end{theorem}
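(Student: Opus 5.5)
The upper bound on the constant comes from an explicit construction, and the lower bound from a split on the triangle count $t(G)$. Throughout I use the identity
\[
\texttt{\#}\,C_4^+(G)=\sum_{uv\in E(G)}\binom{d(uv)}{2},
\]
where $d(uv)=|N(u)\cap N(v)|$ is the codegree of the edge $uv$. A kite is determined by its chord $uv$ together with an unordered pair of common neighbours of $u$ and $v$.

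\emph{Sharpness.} I would use the graphs $K_{s,t}^+$ from the discussion after Conjecture~\ref{thm-main2}. Take $t=\tfrac12(\sqrt m-1)$ and $s\approx 2(\sqrt m+1)$, and perturb the parameters by $O(1)$ (for instance lower $s$ by one, or add a pendant edge) so that $\lambda>\sqrt m$ holds strictly while the edge count stays $(1+o(1))m$. In $K_{s,t}^+$ every triangle contains the added edge $uv$. Any two triangles $uvw$ and $uvw'$ share only that edge. So the kites are exactly the pairs of triangles on the spine $uv$, and there are $\binom{t}{2}=(\tfrac18+o(1))m$ of them. Checking the perturbation is routine: the Perron value changes continuously, and the surplus can be made positive with $O(1)$ changes.

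\emph{Lower bound, dense-triangle case.} Since $\sum_{uv}d(uv)=3t(G)$, convexity gives
\[
\texttt{\#}\,C_4^+\ \ge\ m\binom{3t/m}{2}\ \approx\ \frac{9t^2}{2m}.
\]
This is already at least $(\tfrac18-o(1))m$ once $t(G)\ge(\tfrac16+\varepsilon)m$. So I may assume $t(G)<(\tfrac16+\varepsilon)m$. Theorem~\ref{thm:strong} then gives $\lambda-\sqrt m<\tfrac16+\varepsilon$, so $G$ sits only barely above the Nosal threshold.

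\emph{Lower bound, near-threshold case (the main obstacle).} Here I would use the book framework behind Theorem~\ref{thm:book-free}. Let $x$ be the Perron vector, normalized so that $x_u=1$ at a vertex $u$ of maximum coordinate. Expanding $\lambda^2x_u=\sum_{v\in N(u)}\sum_{w\in N(v)}x_w$ and comparing with $m$ yields an exact identity. It expresses $\lambda^2-m>0$ as edges inside $N(u)$, weighted by Perron coordinates, minus the "missing weight" of edges not incident to $N(u)$ and of coordinates below $1$. The aim is to extract, from positivity of the surplus, a structured family of triangles. Concretely, I want edges $vw$ inside $N(u)$ (or, more generally, edges) whose codegrees $d_1,d_2,\dots$ satisfy $\sum_i d_i\ge(1-o(1))\sqrt m$ with the mass essentially concentrated on few edges. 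Then $\sum_i\binom{d_i}{2}$ is bounded below using the half-missing-weights estimate. The point is that each codegree $d(vw)$ counts common neighbours, which receive weight from both $v$ and $w$; this is where the factor $\tfrac12$ enters, turning $\sqrt m$ triangle-endpoints into a book-like structure of total size about $\tfrac12\sqrt m$.

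The difficulty is that the triangles may spread over several spines, as in the prism blow-up. So rather than a single book of size $\tfrac12\sqrt m$ (impossible in general, since the optimal single book is conjecturally $\tfrac13\sqrt m$), I need a quadratic lower bound for $\sum\binom{d(e)}{2}$. I would try to prove the weighted inequality $\sum_{e}d(e)^2\ge(\tfrac14-o(1))m$ directly from the surplus identity, using Cauchy--Schwarz with weights $x_vx_w$ on the edges. In the few-triangles regime, the hope is that the Perron weights force the codegree distribution to be concentrated enough that the quadratic sum matches the single-book value $\tfrac18 m$ asymptotically.
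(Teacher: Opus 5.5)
\textbf{The sharpness part is fine. The lower bound has a genuine gap.} Your sharpness example is the paper's family ($K_{4t,t}$ plus one edge inside the large part) up to $O(1)$ changes in the parameters. It works because $K_{s,t}^+$ is Nosal whenever $s<4t+4$, so lowering $s$ by one suffices. Adding a pendant edge would not work: it raises $\sqrt m$ by about $1/(2\sqrt m)$ but raises $\lambda$ only by $O(1/m)$.

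For the lower bound, the near-threshold case is a hope rather than an argument, and the target you set is not sufficient. Since $\texttt{\#}\,C_4^+(G)=\frac12\sum_e d(e)^2-\frac32\,t(G)$ and your split still allows $t(G)$ up to $(\frac16+\varepsilon)m$, the inequality $\sum_e d(e)^2\ge(\frac14-o(1))m$ would imply nothing. The same linear term was dropped in your dense case. There $m\binom{3t(G)/m}{2}=\frac{9t(G)^2}{2m}-\frac32\,t(G)$ is negative at $t(G)=m/6$, and convexity over integer codegrees only gives $(\frac18-o(1))m$ once $t(G)\ge\frac38 m$. This does not hurt the reduction, but the reduction is not needed anyway.

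More seriously, consider even the structure that can actually be extracted from the surplus identity, and only after real work: a single edge $uv\in E(U)$, with $U=N(u_*)$, satisfying $d_U(u)+d_U(v)+2|Q_{uv}|\ge\lambda-O(1)$. Counting kites on a few high-codegree chords does not give $\frac18$ from this. The kites on the three chords $u_*u,u_*v,uv$ are guaranteed only about $\min\{\frac12(a^2+b^2+c^2):a+b+2c=\lambda\}=\lambda^2/12$, attained at $(a,b,c)=(\frac\lambda6,\frac\lambda6,\frac\lambda3)$.

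\textbf{What is missing} is the mechanism that makes such an edge appear and that supplies the remaining kites.
\begin{itemize}
\item \emph{Contradiction hypothesis.} Argue from $\texttt{\#}\,C_4^+(G)\le(\frac18-\varepsilon)\lambda^2$. This forces $d_U(u)\le(\frac12-\gamma)\lambda$ for every $u\in U$.
\item \emph{Absorbing ordinary edges.} Use the local bound $Y(u)\ge(\lambda x_v-L_v(uv))_+$, where $L_v(uv)=1+\sum_{z\in N_U(v)}x_z+\sum_{w\in Q_{uv}}x_w$. Combine it with the weighted inequality $a+b-1\le\frac{a(b-\beta)_+}{1-\beta}+\frac{b(a-\alpha)_+}{1-\alpha}$ for $\alpha+\beta\le1$. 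Via the half-missing-weights estimate, every ordinary bad edge (one with $L_u(uv)+L_v(uv)\le\lambda$) is absorbed into $(1-2\delta)\Phi$.
\item \emph{An exceptional edge exists.} Hence some edge is exceptional, giving $A+B+2C\ge\lambda-O(1)$, where $A,B,C$ are the Perron weights of $N_U(u)$, $N_U(v)$, $Q_{uv}$.
\item \emph{Near-completeness.} Bound the number of exceptional edges by $O(\lambda)$. Then $\Phi=O(\lambda)$ and $\sum_{u\in U}x_uY(u)=O(\lambda)$, so $U$ and $W$ are almost completely joined in Perron weight.
\item \emph{Cross terms.} This near-completeness produces the kites with chords $uw$ and $vw$ for $w\in Q_{uv}$, worth $AC+BC$. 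Only with these cross terms does one get $\frac12(A^2+B^2+C^2)+AC+BC\ge\frac14T^2-\frac12C^2\ge\frac18T^2$ for $T=A+B+2C$.
\end{itemize}
Your proposal has neither the ordinary/exceptional absorption nor the $\Phi=O(\lambda)$ step, so it cannot reach the sharp constant.
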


\begin{remark}
We mention that although $C_4^+$ contains both $C_3$ and $C_4$ as subgraphs, 
the corresponding supersaturation phenomena differ substantially. 
Indeed, every Nosal graph contains at least about $\frac{1}{2}\sqrt{m}$ copies of $C_3$, 
$\frac{1}{8} m^2$ copies of $C_4$, and $\frac{1}{8}m$ copies of $C_4^+$. 
Thus, the number of $C_4^+$ lies between the numbers of its two subgraphs $C_3$ and $C_4$, so the extremal count of $C_4^+$ follows a pattern of its own, instead of being inherited  from that of $C_3$ or $C_4$. 
\end{remark}

\smallskip
For counting $C_3$ and $C_4$, there are two exact formulas: 
$$\texttt{\#}C_3(G)=\frac{1}{6} \sum_{i=1}^n \lambda_i^3$$ 
and (see \cite{LL2009DM}) 
$$\texttt{\#}C_4(G) = \frac{1}{8} \sum_{i=1}^n (\lambda_i^4 + \lambda_i^2) - \frac{1}{4} \sum_{i=1}^n d_i^2,$$
 where $\lambda_i$ and $d_i$ are eigenvalues and degrees of $G$, respectively. We refer to \cite{LNW2021, NingZhai2023, NZ2021b,LiLiuZhang26} for related  applications. 
A spectral Sidorenko-type bound for $K_{t,t}$ was recently established in \cite{LLLZ2026}. 
However, for the kite $C_4^+$ and the book $B_t\, (t\ge 3)$, no analogous formula is available for counting copies directly from the spectrum of $G$. 
This makes the problem of counting kites more challenging. 
Instead, we count the number of kites through the codegree formula: 
$$\texttt{\#}\,C_4^+(G)=\sum_{uv\in E(G)}\binom{\tau(uv)}{2}, \quad \text{where}~~\tau(uv):=|N(u)\cap N(v)|. $$ 

\paragraph{Proof overview.}
Both proofs of Theorems \ref{thm:book-free} and \ref{thm-C4+} share the same approach that is based on a local structural decomposition around a vertex of
maximum Perron weight, combining with an identity for the spectral surplus and a half-missing-weights estimate.   
Let $(x_v)_{v\in V(G)}$ be the Perron
eigenvector of $A(G)$ satisfying $\max_{v\in V(G)} \{x_v\}=1$, attained at a vertex
$u_*$. We denote 
\[ U:=N(u_*), \qquad W:=V(G)\setminus(\{u_*\}\cup U). \] 
To begin with, 
a standard step is to establish a formula that localizes the {spectral surplus}
$\lambda^2-m$. 
The Perron equation at $u_*$
gives $\lambda =\sum_{u\in U}x_u$. Multiplying both sides by $\lambda$ and substituting $\lambda x_u =\sum_{w\in N(u)} x_w$, the double counting then yields the following exact identity (Theorem \ref{thm:identity}) 
\begin{equation} \label{eq-dengshi}
\lambda^2-m=\sum_{uv\in E(U)}(x_u+x_v-1)-\Phi, 
\end{equation}
where $\Phi := \sum_{w\in W} \big(d_U(w)(1-x_w) + \frac{1}{2}d_W(w) \big) \ge 0$ is called a \emph{deficit term} supported on $W$. 
Thus, the spectral surplus $\lambda^2-m$ reflects the competition between the edges
inside $U$ and the deficit $\Phi$. 

Only \emph{bad} edges $uv\in E(U)$, those with $x_u+x_v >1$, 
contribute positively, and the remaining edges may be discarded. 
The missing-weight of $u\in U$ in $W$ is denoted by 
$$ Y(u):=\sum_{w\in W\setminus N(u)}x_w .$$ 
A key ingredient in our proofs is to establish a general estimate on missing-weights. In Theorem \ref{thm:halfmissing}, we shall show that $\Phi$ already controls
half of the missing-weights between $U$ and $W$:  
\begin{equation} \label{eq-half-miss}
     \frac12\sum_{u\in U}x_uY(u) \le \Phi. 
\end{equation} 
Therefore, the heart of the remaining argument is to link (\ref{eq-dengshi}) and (\ref{eq-half-miss}),  showing that the contribution of bad edges inside $U$ is dominated by half of the missing-weights:  
\begin{equation} \label{bad-edge-upper-bound}
  \sum_{uv\in E(U)}(x_u+x_v-1)\le \frac12\sum_{u\in U}x_uY(u). 
\end{equation}
The bound (\ref{bad-edge-upper-bound}) relies on the local structural properties of $G$. So the hypotheses of the two theorems enter solely through
bounds on the contribution of edges of $U$. 

\smallskip
\noindent
\boxed{\text{Sketch of Theorem~\ref{thm:book-free}}}
Being $B_{r+1}$-free forces a single constraint:  $d_U(u)\le r$ for all $u\in U$
(otherwise $u_*u$ would span a book of size $r+1$). Applying this together with the bound
$|N_W(u)\cap N_W(v)|\le r-1$ on the triangle $u_*uv$, we obtain the local 
bound $Y(u)\ge(\lambda x_v-2r)_+$ and 
$ Y(v)\ge (\lambda x_u -2r)_+$ for each bad edge $uv$, where  $(t)_+:=\max\{t,0\}$. 
An elementary inequality, 
$a+b-1\le\tfrac12\bigl(a(4b-2)_++b(4a-2)_+\bigr)$, together with the assumption $\lambda\ge 4r$,  yields the  bound
$$ x_u+x_v-1\le\frac1{2r}\bigl(x_uY(u)+x_vY(v)\bigr). $$  Summing over bad edges and
using $d_U (u)\le r$ again, we get 
$\sum_{\text{bad}} (x_u+x_v -1) \le  \tfrac12\sum_{u}x_uY(u)$, as desired in (\ref{bad-edge-upper-bound}). Combining with (\ref{eq-dengshi}) and (\ref{eq-half-miss}), 
we obtain $\lambda ^2\le m$. For the equality case, $\lambda^2=m$ first forces no bad edges in $U$ and then $\Phi=0$, so every vertex $w\in W$ has weight $1$ and
is adjacent to all vertices of $U$. A short computation with the Perron equations shows that any
edge inside $U$ would force $\lambda\le 3r<4r$, a contradiction, so $e(U)=0$ and
$G$ is complete bipartite.

\smallskip
\noindent
\boxed{\text{Sketch of Theorem~\ref{thm-C4+}}}
Sharpness comes from $G_t$, obtained from $K_{4t,t}$ by adding one
edge inside the part of size $4t$: the Rayleigh quotient shows that $G_t$ is Nosal, while all copies of $C_4^+$ contain the added edge as a chord, which yields 
$\texttt{\#}\,C_4^+(G_t)=\binom t2=(\tfrac18-o(1))e(G_t)$. For the lower bound, we argue by
contradiction, assuming that $\lambda^2>m$, but 
$\texttt{\#}\, C_4^+(G)\le(\tfrac18-\varepsilon)\lambda^2$.

 First, 
we sharpen the two local estimates to weighted forms
$Y(u)\ge \big(\lambda x_v-L_v(uv) \big)_+$, where  $L_v(uv):=1+\sum_{z\in N_U(v)} x_z + \sum_{w\in Q_{uv}} x_w$ with $Q_{uv}:=N_W(u)\cap N_W(v)$.   Second, we split the bad
edges of $U$ into \emph{ordinary} ones (that is, $L_u(uv)+L_v(uv)\le\lambda$) and
\emph{exceptional} ones. A weighted version of the elementary inequality in Lemma \ref{lem:kite-algebra}, combined with
a local coefficient estimate in Claim \ref{lem:kite-local-coeff}, shows that the ordinary edges are still absorbed
by the deficit $\Phi$:
\begin{equation} \label{bound-ordinary}  \sum_{\text{ordinary $uv$}}(x_u+x_v-1)\le \left( \frac{1}{2}-\delta_{\varepsilon}+o(1) \right) \sum_{u\in U} x_uY(u) \le \big( 1- 2\delta_{\varepsilon} + o(1) \big)\Phi, 
\end{equation} 
where the last inequality follows by (\ref{eq-half-miss}), and the gain
$\delta_{\varepsilon}$ is supplied by the contradiction hypothesis, which captures 
$d_U(u)\le(\tfrac12-\gamma_{\varepsilon} )\lambda$. 
If there is no exceptional edge, then
 (\ref{eq-dengshi}) and (\ref{bound-ordinary})  would give $\lambda^2-m\le-(2\delta_{\varepsilon} -o(1))\Phi\le0$, a contradiction.
Hence, an exceptional edge must exist. 
We write $\mathcal E$ for the set of exceptional bad edges. One verifies $|\mathcal E|=O(\lambda)$, which yields 
$\Phi=O(\lambda)$. Using (\ref{eq-half-miss}), the sum of missing-weights between $U$ and $W$ is $O(\lambda)$. A single
exceptional edge $uv$ then forces
\begin{equation} \label{eq-count-lower}
\texttt{\#} \,C_4^+(G)\ge 
\frac{1}{2} (A^2+B^2+C^2)+AC+BC-O(\lambda),
\end{equation}
where $A,B,C$ are the sums of weights of $N_U(u),N_U(v)$ and
$Q_{uv}$, respectively. 
The first three terms in (\ref{eq-count-lower}) come from the three center
edges $u_*u,u_*v,uv$, and the cross terms $AC,BC$ from two ``rectangles''
pairing $N_U(u),N_U(v)$ with $Q_{uv}$, all of whose center edges are distinct.
Minimizing the expression of (\ref{eq-count-lower}) under the exceptional constraint
$A+B+2C\ge\lambda-O(1)$ gives at least $\tfrac18\lambda^2-O(\lambda)$ copies of
$C_4^+$, contradicting the hypothesis. 
Therefore, we establish the sharp constant
$\tfrac18$. 

\smallskip 
\noindent
\boxed{\text{A more general result}} 
We will demonstrate in Section \ref{sec:large-books} that the above approach also works for counting books $B_t$ of arbitrary size $t \ge 3$, not just the book of size two. 
More generally, in Theorem \ref{thm:Bt-count}, we will prove the following stronger result for every $t\ge 1$,   
\begin{equation} \label{eq-heart} \tag{$\heartsuit$}
   \lim_{m\to\infty}\ \min_{\lambda(G)>\sqrt m}\
   \frac{\texttt{\#} B_t(G)}{m^{t/2}}\;=\;\frac{1}{t!\,2^{t}} .
\end{equation}
The two smallest cases are already known: when $t=1$, the count is simply the number of triangles, of order $\tfrac12\sqrt m$ by Ning and Zhai \cite{NingZhai2023},
and the case $t=2$ is Theorem~\ref{thm-C4+} with constant $\tfrac18$.

\paragraph{Organization.}
The rest of the paper is organized as follows. 
In Section~\ref{sec:2}, we prove Theorem~\ref{thm:m-2}. 
Section~\ref{sec:3} is devoted to the proofs of Theorems~\ref{thm-edge-spect-LS} and~\ref{thm:strong}, 
and Section~\ref{sec:4} to those of Theorems~\ref{thm:book-free}, \ref{thm-C4+} 
and (\ref{eq-heart}). 
We conclude in Section~\ref{sec:concluding} with an open problem for interested readers.

\section{The third-layer supersaturation}

\label{sec:2}
The proof of Theorem \ref{thm:m-2} separates into two cases according to
whether $G$ contains a copy of $K_4$.  In the first case,  we use a special case of
Nikiforov's inequality.  In the second case, interlacing gives
three eigenvalues at most $-1$, and a third-moment estimate forces enough
triangles.

First, the coefficient $\frac{1}{3}$ of Bollob\'{a}s--Nikiforov's bound (\ref{eq:BN}) can be improved to $\frac{1}{2}$ under a specific structural assumption. Let $T_i$ denote the number of all $i$-cliques in $G$.  
An inequality of Nikiforov \cite{Nikiforov2002} states that if $G$ is a $K_{k+1}$-free graph with spectral radius $\lambda$, then
\begin{equation*} 
\lambda^k \le T_2 \lambda^{k-2} + \cdots + (i-1)T_i \lambda^{k-i} + \cdots + (k-1)T_k.  
\end{equation*}

We shall use the following $k=3$ case of Nikiforov's inequality above.

\begin{lemma}[Nikiforov \cite{Nikiforov2002}]
\label{lem:K4free}
Let $G$ be an $m$-edge $K_4$-free graph with  spectral radius $\lambda $.  Then
\[
    t(G)\ge \frac{\lambda (\lambda^2-m)}{2}.
\]
\end{lemma}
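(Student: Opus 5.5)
The plan is to read the statement off from Nikiforov's inequality quoted just above, specialized to $k=3$. A $K_4$-free graph is $K_{k+1}$-free with $k=3$. In that case the right-hand side has only two terms, corresponding to $i=2$ and $i=3$:
\[
\lambda^3\le T_2\,\lambda^{3-2}+(3-1)\,T_3 = m\lambda+2\,t(G),
\]
because $T_2$ counts edges, so $T_2=m$, and $T_3$ counts triangles, so $T_3=t(G)$. Rearranging gives $2t(G)\ge \lambda^3-m\lambda=\lambda(\lambda^2-m)$, which is exactly the claimed bound $t(G)\ge \tfrac12\lambda(\lambda^2-m)$. The key steps, in order, are:
\begin{enumerate}
\item check that $K_4$-freeness is precisely the hypothesis of the $k=3$ case;
\item identify $T_2=m$ and $T_3=t(G)$;
\item keep track of the coefficient $(i-1)$, which equals $2$ in front of $T_3$;
\item divide by $2$.
\end{enumerate}
No sign issues arise. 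If $\lambda^2\le m$ the bound is trivial since $t(G)\ge 0$, and otherwise the rearrangement is the whole argument.

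If a self-contained argument were preferred over citing \cite{Nikiforov2002}, I would prove the $k=3$ inequality directly with the unit Perron vector $\mathbf{x}$ of $A(G)$. I would write $\lambda^3=\mathbf{x}^{\top}A^3\mathbf{x}$, expand $\lambda^2=\sum_{u}\bigl(\sum_{v\in N(u)}x_v\bigr)^2$, and split the square into three parts: diagonal terms, pairs of neighbours of $u$ that are adjacent (these are triangles through $u$), and pairs that are non-adjacent. I would then try to bound the non-adjacent pairs using the fact that $N(u)$ is triangle-free when $G$ is $K_4$-free, aiming for a Motzkin--Straus-type estimate on each neighbourhood.

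This direct route is the only real obstacle: converting the weighted neighbourhood sums back into the unweighted counts $m$ and $t(G)$ is delicate. I therefore expect to simply invoke the cited inequality, so the proof of Lemma~\ref{lem:K4free} reduces to the one-line specialization above.
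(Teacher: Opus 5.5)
Your proposal is correct and matches the paper: the paper gives no separate proof and simply invokes the $k=3$ case of Nikiforov's inequality, $\lambda^3\le T_2\lambda+2T_3=m\lambda+2t(G)$, which rearranges to the stated bound. The self-contained Perron-vector route you sketch is not needed, and the paper does not pursue it either.
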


We also need the following elementary inequality.

\begin{lemma}
\label{lem:cubes}
Let $L\ge 1$, and let $a_1,a_2,\dots,a_s$ be nonnegative real numbers such that at
least three of them are at least $1$ and
$\sum_{i=1}^s a_i^2\le L^2+2$. 
Then
\[
        \sum_{i=1}^s a_i^3\le L^3+2.
\]
Moreover, equality occurs only when, up to permutation, $ (a_1,a_2,\dots,a_s)=(L,1,1,0,\ldots ,0)$.
\end{lemma}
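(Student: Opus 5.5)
We will prove the lemma by passing to the squares $b_i:=a_i^2$ and maximizing a convex function over a polytope. After relabelling, assume $a_1,a_2,a_3\ge 1$. The problem becomes: maximize
\[
F(b):=\sum_{i=1}^s b_i^{3/2}
\]
over the polytope
\[
P:=\Bigl\{b\in\mathbb R^s:\ b_1,b_2,b_3\ge 1,\ b_i\ge 0\ (i\ge 4),\ \sum_{i=1}^s b_i\le L^2+2\Bigr\}.
\]
Since $L\ge 1$, $P$ is nonempty. It is compact, because every coordinate is bounded below and the sum is bounded above. The function $t\mapsto t^{3/2}$ is strictly convex on $[0,\infty)$, so $F$ is strictly convex on $P$.

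The first step is the standard fact that a strictly convex function attains its maximum over a polytope only at vertices. Indeed, a non-vertex point $b$ is the midpoint of a nondegenerate segment $[b',b'']\subseteq P$, and strict convexity gives $F(b)<\tfrac12\bigl(F(b')+F(b'')\bigr)\le\max\{F(b'),F(b'')\}$. So it suffices to list the vertices of $P$ and evaluate $F$ there.

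The second step is to identify the vertices. A vertex has $s$ linearly independent active constraints among the $s+1$ constraints defining $P$. Hence either every lower bound is tight, or exactly one coordinate is free and the sum constraint is tight. This gives three types of vertices.

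\begin{itemize}
\item The point $(1,1,1,0,\dots,0)$ (all lower bounds tight), with $F=3\le L^3+2$. Equality here forces $L=1$, and then this point coincides with $(L^2,1,1,0,\dots,0)$.
\item Points where the free coordinate is one of $b_1,b_2,b_3$, equal to $L^2$, with the other two equal to $1$ and the rest $0$. Here $F=L^3+2$.
\item Points with $b_1=b_2=b_3=1$ and one $b_j=L^2-1$ for some $j\ge 4$. Here $F=3+(L^2-1)^{3/2}$. Since
\[
(L^2-1)^{3/2}\le (L^2-1)L=L^3-L\le L^3-1,
\]
we get $F\le L^3+2$. Equality forces $L=1$, in which case this vertex is again $(1,1,1,0,\dots,0)$, a permutation of $(L,1,1,0,\dots,0)$ after taking square roots.
\end{itemize}

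Therefore $\max_P F=L^3+2$. Since the maximum of a strictly convex function is attained only at vertices, equality forces $b$ to be a vertex with $F=L^3+2$. By the list above, every such vertex is a permutation of $(L^2,1,1,0,\dots,0)$. Taking square roots yields the stated equality case $(a_1,\dots,a_s)=(L,1,1,0,\dots,0)$ up to permutation.

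The only delicate points are the vertex enumeration and the boundary case $L=1$, where several vertex types coincide. Neither is a real obstacle.
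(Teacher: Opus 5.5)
Your proof is correct, and it takes a different route from the paper.

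The paper uses explicit smoothing. After sorting so that $a_1\ge a_2\ge\cdots$, it merges each $a_j$ ($j\ge 4$) into $a_1$ using $(a^2+b^2)^{3/2}>a^3+b^3$. It then lowers $a_2,a_3$ to $1$ using $(x^2+y^2-1)^{3/2}+1>x^3+y^3$ for $x,y>1$, keeping $\sum_i a_i^2$ fixed throughout. The equality case is read off from the strictness of each move.

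You instead pass to $b_i=a_i^2$ and observe that the feasible region is a simplex, namely a translate of $\{c\ge 0,\ \sum_i c_i\le L^2-1\}$. On it the objective $\sum_i b_i^{3/2}$ is strictly convex, so you only need to check the $s+1$ vertices. The only real computation is $(L^2-1)^{3/2}\le L^3-1$.

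Both arguments rest on the same fact: the paper's two inequalities are majorization consequences of the convexity of $t\mapsto t^{3/2}$ in the squared variables. What your extreme-point formulation buys:
\begin{itemize}
\item it needs no ordering and no sequence of moves;
\item the equality characterization comes directly from strict convexity;
\item the degenerate case $L=1$, where the simplex collapses to a point, is handled automatically.
\end{itemize}
The paper's version is more hands-on and avoids polyhedral language.
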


\begin{proof}
We denote $f(a_1,\ldots ,a_s)=\sum_{i=1}^s a_i^3$. 
    Without loss of generality, we may assume that 
    $a_1\ge \cdots \ge a_s\ge 0$ and $a_1,a_2,a_3\ge 1$. 
    First, it is easy to verify the following two inequalities: 
   \begin{equation} \label{eq-ele-1}
   (a^2+b^2)^{3/2} > (a^3+b^3) \qquad  \text{for all $a,b> 0$}; 
   \end{equation}
   and 
   \begin{equation} \label{eq-ele-2}
   (x^2+y^2-1)^{3/2} +1 > x^3+y^3 \qquad 
   \text{for all $x,y> 1$}.
   \end{equation}
   Starting from the tuple $(a_1,a_2,\ldots,a_s)$, we move each weight $a_j$, where $j\ge 4$ and $a_j >0$, to $a_1$ by replacing $(a_1,a_j)$ with $\big((a_1^2+a_j^2)^{1/2},0 \big)$. Using (\ref{eq-ele-1}), we see that this replacement 
   does not decrease the function $f$, and does not change the sum of squares of the weights. 
   After the operation for all $j>3$, we are left with three nonzero weights at least $1$.

For $i=2,3$, if $a_i>1$, then replacing $(a_1,a_i)$ with 
$\big( (a_1^2+a_i^2 -1)^{1/2},1 \big)$. Again, the sum of squares of weights is unchanged, and (\ref{eq-ele-2}) implies that the function $f$ does not decrease. After these operations, the resulting tuple has the form $(a_1^*,1,1,0,\ldots ,0)$, where $(a_1^*)^2+2 =\sum_{i=1}^s a_i^2 \le L^2+2$, which yields $a_1^* \le L$. Thus, we conclude that $f(a_1,a_2,\ldots ,a_s)\le f(a_1^*,1,1,0,\ldots ,0)\le L^3+2$, as needed.  
For the equality case, since the operations using inequality (\ref{eq-ele-1}) or (\ref{eq-ele-2}) strictly increase the value of $f$ whenever they change the tuple, the equality $\sum_{i=1}^s a_i^3 =L^3+2$ forces no such operation to be performed. Hence, the tuple is already of the form $(L,1,1,0,\ldots ,0)$. 
\end{proof}

We also need the following standard spectral characterization.

\begin{lemma} \label{lem-one-positive}
If a connected graph $H$ has exactly one positive  eigenvalue, then $H$ is complete multipartite. Moreover, a complete multipartite graph with $s\ge 2$ parts has rank $s$. 
\end{lemma}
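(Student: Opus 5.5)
Both parts are standard facts, and I will prove them in reverse order, since the rank statement for complete multipartite graphs is a useful model for the first part.

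\emph{Rank of a complete multipartite graph.} Let $H=K_{n_1,\dots,n_s}$ with $s\ge 2$. Then $A(H)=J-B$, where $B$ is the block-diagonal matrix with all-ones blocks $J_{n_i}$. The row space of $A(H)$ lies in the span of the $s$ part-indicator vectors $\mathbf 1_{V_1},\dots,\mathbf 1_{V_s}$, because two vertices in the same part have identical rows. So $\operatorname{rank}A(H)\le s$. For the reverse inequality, restrict to one representative vertex from each part. The resulting principal submatrix is $J_s-I_s$, whose eigenvalues are $s-1$ and $-1$ (the latter with multiplicity $s-1$). It is therefore nonsingular, and $\operatorname{rank}A(H)\ge s$.

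\emph{Exactly one positive eigenvalue forces complete multipartite.} Let $H$ be connected with exactly one positive eigenvalue. By Cauchy interlacing, every induced subgraph $F$ of $H$ has at most one positive eigenvalue: if $F$ has $k$ vertices, then $\lambda_2(F)\le\lambda_2(H)\le 0$. I will show that non-adjacency is an equivalence relation on $V(H)$. Its classes are then independent sets, and any two vertices in different classes are adjacent, which is exactly the statement that $H$ is complete multipartite. Since non-adjacency is reflexive (no loops) and symmetric, it suffices to prove transitivity. Suppose it fails: there are vertices $a,b,c$ with $a\not\sim b$, $b\not\sim c$, but $a\sim c$. This means $H$ contains an induced $K_2\cup K_1$. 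I then use connectivity to extend this to a forbidden induced subgraph.

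Since $H$ is connected, $b$ has some neighbor $d$, and $d\notin\{a,c\}$. There are four cases, according to the adjacency of $d$ to $a$ and $c$.
\begin{itemize}
\item $d$ is adjacent to neither: then $\{a,c,b,d\}$ induces $2K_2$, which has eigenvalues $1,1,-1,-1$, so two positive eigenvalues.
\item $d$ is adjacent to exactly one of them: then $\{a,c,d,b\}$ induces the path $P_4$. Its eigenvalues are $\pm 1.618$ and $\pm 0.618$, so again two positive eigenvalues.
\item $d$ is adjacent to both: then $\{a,b,c,d\}$ induces the paw (a triangle $acd$ with a pendant edge $db$). A check of its characteristic polynomial $x^4-4x^2-2x+1$ shows two positive roots: the polynomial equals $1$ at $0$, is negative at $1$, and tends to $+\infty$.
\end{itemize}
Every case contradicts the interlacing conclusion, so transitivity holds and the proof is complete.

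The only step with real content is the case analysis at the end. The point to watch is that the polynomial check for the paw genuinely yields two positive roots; the sign pattern at $0$, $1$ and $\infty$ confirms this.
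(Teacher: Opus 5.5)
Your proof is correct. The rank part coincides with the paper's: identical rows within a part give rank at most $s$, and the principal submatrix $J_s-I_s$ gives rank at least $s$. For the first part, however, you take a genuinely different route.

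The paper argues through the Perron vector $\bm{z}>0$. Since $\lambda$ is the only positive eigenvalue, $A$ is negative semidefinite on $\bm{z}^\perp$. For non-adjacent $u,v$ the vector $\bm{y}=z_v\bm{e}_u-z_u\bm{e}_v\in\bm{z}^\perp$ satisfies $\bm{y}^TA\bm{y}=0$, hence $A\bm{y}=0$, which forces $N(u)=N(v)$. Transitivity of non-adjacency is then immediate.

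You instead use Cauchy interlacing to pass the condition $\lambda_2\le 0$ to every induced subgraph. You then rule out an induced $K_2\cup K_1$ by extending it, via a neighbor $d$ of $b$, to an induced $2K_2$, $P_4$ or paw, each of which has two positive eigenvalues. Your computations are right: the spectra of $2K_2$ and $P_4$ are as stated, and the paw polynomial $x^4-4x^2-2x+1$ changes sign on $(0,1)$ and on $(1,\infty)$.

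This is Smith's classical forbidden-induced-subgraph argument. It is fully elementary and makes the obstruction explicit, at the cost of a case analysis and some small spectral computations. The paper's argument is case-free and gives a stronger local conclusion: any two non-adjacent vertices are twins, with $A(z_v\bm{e}_u-z_u\bm{e}_v)=0$. There, connectivity enters only through the positivity of the Perron vector. In your proof, connectivity is used only to guarantee that $b$ has a neighbor. Both arguments use only $\lambda_2(H)\le 0$, not the full hypothesis of exactly one positive eigenvalue.
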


\begin{proof}
Let $A=(a_{ij})$ be the adjacency matrix of $H$, let $\lambda$ be its spectral radius, and let $\bm{z}>0$ be the positive Perron eigenvector. Since $\lambda$ is the only positive eigenvalue, 
$A$ is negative semidefinite on the subspace $\bm{z}^\perp$. 
If $u$ and $v$ are non-adjacent, by denoting $\bm{y}=z_v \bm{e}_u-z_u \bm{e}_v$,  
then $\bm{y} \perp \bm{z}$ and $\bm{y}^T A \bm{y}=0$, so $A\bm{y}=0$. Hence, for every vertex $w$, we have $z_v a_{wu}=z_u a_{wv}$. 
As \(z_u,z_v>0\), we have $a_{wu}=a_{wv}\in\{0,1\}$, so 
 $u$ and $v$ have the same neighborhood. 
Thus, non-adjacency relation is transitive: 
its equivalence classes are independent sets, and any two distinct classes are joined completely. Hence, \(H\) is a
complete multipartite graph on the equivalence classes.

For the rank assertion, vertices in the same part give identical columns, so the rank is at
most $s$. Taking one representative from each part gives the principal submatrix $J_s -I_s$, which is nonsingular for $s\ge 2$. Hence the rank of $A(H)$ is exactly $s$.
\end{proof} 

\begin{lemma}\label{lem:rank-two-bipartite}
If a graph \(G\) has exactly two nonzero adjacency eigenvalues, one positive
and one negative, then the non-isolated part of \(G\) is a complete bipartite
graph.
\end{lemma}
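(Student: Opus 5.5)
We first pass to the non-isolated part of $G$ and show it is connected, and then show it has exactly one positive eigenvalue. Once both facts are in hand, Lemma~\ref{lem-one-positive} applies and gives the complete bipartite structure.

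\textbf{Step 1: no isolated edges and a single component.} Let $H$ be the subgraph of $G$ obtained by deleting all isolated vertices. Since $\operatorname{rank} A(G)=2$, the spectrum of $G$ consists of one positive eigenvalue $\mu$, one negative eigenvalue $\nu$, and zeros.
\begin{itemize}
\item Every connected component of $H$ has at least one edge. A nonempty graph has trace zero and a nonzero sum of squared eigenvalues, namely twice its number of edges. Hence each component carries a positive eigenvalue, and a negative one as well.
\item The spectrum of $H$ is the union of the spectra of its components. If $H$ had two components, there would be at least two positive eigenvalues, which is impossible. So $H$ is connected.
\end{itemize}

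\textbf{Step 2: complete multipartite with rank two.} Since $H$ is connected with exactly one positive eigenvalue $\mu$, Lemma~\ref{lem-one-positive} shows that $H$ is complete multipartite with some number $s$ of parts. We have $s\ge 2$, because $H$ has an edge. The same lemma says the rank of $A(H)$ equals $s$.

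\textbf{Step 3: conclusion.} The rank of $A(H)$ equals the number of nonzero eigenvalues of $G$, which is $2$. Therefore $s=2$, and $H$ is complete bipartite.

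\textbf{Where care is needed.} The only delicate point is the rank count in Step 3. We must make sure that the nonzero eigenvalues of $G$, counted with multiplicity, number exactly $2$. This holds because $\mu$ and $\nu$ are each simple, as the hypothesis says there is exactly one positive and one negative eigenvalue. Adding or removing isolated vertices only adds or removes zero eigenvalues, so it does not affect this count.
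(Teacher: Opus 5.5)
Your proof is correct. It shares the paper's main step, which is applying Lemma~\ref{lem-one-positive} to the unique nontrivial component to get a complete multipartite graph. Where you differ is in how you pin down the number of parts.

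The paper argues via traces. Rank two and $\tr A=0$ force the nonzero eigenvalues to be $\lambda$ and $-\lambda$, so $\tr A^3=0$ and $G$ is triangle-free. A triangle-free complete multipartite graph has at most two parts. The paper's proof of this lemma never uses the rank half of Lemma~\ref{lem-one-positive}.

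You instead use exactly that rank half: $s=\operatorname{rank}A(H)=2$. Your ending is more direct and avoids the cubic trace. The paper's ending, in exchange, needs only the structural half of Lemma~\ref{lem-one-positive}.

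Your Step~1 also justifies something the paper leaves implicit when it speaks of ``its unique nontrivial component''. Each nontrivial component has trace zero and nonzero $\tr A^2$, so it contributes its own positive eigenvalue. Hence there can be only one such component.

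Both arguments count the two nonzero eigenvalues with multiplicity (rank two), as they must. Under a distinct-values reading the statement fails: $K_3$ has spectrum $2,-1,-1$, so it has only two distinct nonzero eigenvalues, yet it is not bipartite.
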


\begin{proof}
The adjacency matrix has rank two and trace zero, so its two nonzero eigenvalues
are \(\lambda\) and \(-\lambda\). In particular \(\operatorname{tr}A^3=0\), so
\(G\) is triangle-free. Also \(G\) has exactly one positive eigenvalue, hence
its unique nontrivial component is complete multipartite by
Lemma~\ref{lem-one-positive}. A complete multipartite triangle-free graph has
at most two parts, and hence is complete bipartite.
\end{proof}

We are now ready to prove Theorem \ref{thm:m-2}. 

\begin{proof}[{\bf Proof of Theorem \ref{thm:m-2}}]
We denote $ \lambda=\lambda(G)$ and $t=t(G)$. 
The assumption is equivalent to 
\begin{equation} \label{eq:assumption}
        m\le \lambda^2-2\lambda+3.
\end{equation} 
We first consider the case where $G$ is $K_4$-free.  Applying Lemma
\ref{lem:K4free}, we have 
\[
        t \ge \frac{\lambda(\lambda^2-m)}{2}.
\]
Using (\ref{eq:assumption}), we get
$\lambda^2-m\ge 2\lambda-3$. 
Hence, we get 
\[
        t\ge \frac{\lambda(2\lambda-3)}{2} 
        \ge (\lambda -1)^2,
\]
where the last inequality holds since $\lambda \ge 1+ \sqrt{m-2} \ge 2$. 
Then we have $ t\ge (\lambda-1)^2\ge m-2$. 

Now suppose that $G$ contains a copy of $K_4$.  Let $ \lambda_1=\lambda\ge \lambda_2\ge \cdots\ge \lambda_n$ be the eigenvalues of the
adjacency matrix $A$ of $G$. 
The eigenvalues of $K_4$ are $3,-1,-1,-1$.  By Cauchy interlacing, applied to
the principal submatrix induced by a copy of $K_4$, we have
\begin{equation} \label{eq:three}
        \lambda_{n-2},\ \lambda_{n-1},\ \lambda_n\le -1.
\end{equation}
We denote  
\[
        L:=\lambda-2.
\]
Since $G$ contains $K_4$, we have $m\ge 6$, and so the hypothesis gives
$\lambda\ge 3$.  Thus $L\ge 1$.

Using $\sum_{i=1}^n \lambda_i^2=\tr(A^2)=2m$ and (\ref{eq:assumption}), we obtain
\begin{equation} \label{eq-square}
\begin{aligned}
        \sum_{i=2}^n\lambda_i^2
        =2m-\lambda^2                           \le 2(\lambda^2-2\lambda+3)-\lambda^2   =(\lambda-2)^2+2                        =L^2+2.
\end{aligned}
\end{equation}
Among the values $\lambda_2,\lambda_3,\ldots ,\lambda_n$, 
we write the negative eigenvalues as 
$ -a_1,\dots,-a_s$, where each $a_i>0$, 
and the positive eigenvalues as 
$ b_1,\dots,b_r$, where each $b_j>0$. 
By (\ref{eq:three}), at least three of the $a_i$ are at least $1$.  From (\ref{eq-square}), we see that  
\[
        \sum_i a_i^2+\sum_j b_j^2\le L^2+2.
\]
Applying Lemma \ref{lem:cubes} gives
$  \sum_{i} a_i^3\le L^3+2$. 
Therefore, we get 
\[
        \sum_{i=2}^n\lambda_i^3 
        =
        \sum_j b_j^3-\sum_i a_i^3
        \ge -\sum_i a_i^3 \ge 
        -(L^3+2).
\]
Since $\tr(A^3)=6t$, we get
\[
\begin{aligned}
        6t
        =\sum_{i=1}^n\lambda_i^3                \ge \lambda^3-(\lambda-2)^3-2       =6(\lambda-1)^2.
\end{aligned}
\]
By the hypothesis $\lambda \ge 1+ \sqrt{m-2}$, we conclude $ t\ge m-2$, as desired. 

\smallskip 
It remains to determine the equality case.  

Suppose first that $G$ is
$K_4$-free and $t=m-2$.  In the chain of inequalities above, equality must hold
throughout.  In particular,
we must have $   \frac{\lambda(2\lambda-3)}{2}=(\lambda-1)^2$, 
so $\lambda=2$.  Since
$  2=\lambda\ge 1+\sqrt{m-2}$, 
we get $m\le 3$.  As $m\ge 3$, it follows that $m=3$, $t=1$ and $G=K_3$.

Now suppose that $G$ contains $K_4$ and $t=m-2$.  From the proof, the equality forces
\[
        (\lambda-1)^2=m-2. 
\]
It also forces equality in the third-moment estimate.  In the chain
\[
        \sum_{i=2}^n\lambda_i^3
        =
        \sum_j b_j^3-\sum_i a_i^3
        \ge
        -\sum_i a_i^3
        \ge
        -(L^3+2),
\]
both inequalities must therefore be equalities.  Hence $ \sum_j b_j^3=0$, 
so there are no positive non-principal eigenvalues.  Since $ \sum_i a_i^3=L^3+2$,  
 the equality statement in Lemma \ref{lem:cubes} gives, up to order,  
$ a_1=L$ and $a_2=a_3=1$, 
and there are no further negative non-principal eigenvalues. Thus, all
remaining non-principal eigenvalues are $0$.  Then the nonzero eigenvalues
of $G$ are $-(\lambda-2), -1,-1,\lambda$.  
In particular, $G$ has exactly one positive adjacency eigenvalue.

By Lemma \ref{lem-one-positive}, 
$G$ is a complete multipartite graph.  Let its part sizes be $  s_1,\dots,s_p \ge 1$. 
Since $G$ contains $K_4$, we have $p\ge 4$. 
If $p\ge 5$, then every edge lies in at least $p-2\ge 3$ triangles. Indeed, an
edge joins two different parts, and a third vertex may be chosen from any of
the other $p-2$ nonempty parts.  Hence
by double counting, we get $3t(G)\ge m(p-2)\ge 3m$, so $t\ge m$, contradicting $t=m-2$.  Therefore, we must have $p=4$, so $G$ is a complete four-partite graph
with part sizes $s_1,s_2,s_3,s_4$.  Its number of edges and triangles are
\[
        m=\sum_{1\le i<j\le 4}s_is_j,
        \qquad
        t=\sum_{1\le i<j<\ell\le 4}s_is_js_\ell.
\]
We denote $ s_i:=1+x_i$ for some $ x_i\ge 0$. 
A direct expansion gives
\begin{equation} \label{eq-zero}
\begin{aligned}
        t-m+2
        &=
        \sum_{1\le i<j\le 4}x_ix_j
        +
        \sum_{1\le i<j<\ell\le 4}x_ix_jx_\ell .
\end{aligned}
\end{equation}
Since $t=m-2$, the left-hand side of (\ref{eq-zero}) is $0$.  The right-hand side is a sum
of nonnegative integers, so every term is $0$.  Hence at most one of the
$x_i$ is nonzero.  Therefore the four part sizes are $s_1,1,1,1$ 
for some integer $s_1\ge 1$.  Equivalently, we get $G= K_3\vee s_1K_1$, where $s_1=\frac{m-3}{3}$. 
\end{proof}

\section{The edge-spectral Lov\'{a}sz--Simonovits theorem}

\label{sec:3}

In this section, we first prove Theorem \ref{thm:strong}, and then apply it to prove Theorem \ref{thm-edge-spect-LS}.  
To prove Theorem \ref{thm:strong}, we need to use the following power-mean monotonicity of Jensen; see, e.g., \cite{Beck1946}. 

\begin{lemma} \label{lem:Jensen}
Suppose that $a_1,\ldots ,a_n$ are nonnegative numbers. Then for $s\ge t>0$,  
\[ \left(\sum_{i=1}^n a_i^s \right)^{\!1/s} \le 
\left(\sum_{i=1}^n a_i^t \right)^{\!1/t} .  \] 
\end{lemma}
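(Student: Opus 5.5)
Since both sides are positively homogeneous of degree one in $(a_1,\ldots,a_n)$, we can normalize. If all $a_i=0$, both sides vanish and there is nothing to prove. Otherwise we set
\[
S:=\Bigl(\sum_{i=1}^n a_i^t\Bigr)^{1/t}>0, \qquad b_i:=a_i/S .
\]
Then $\sum_i b_i^t=1$, and the claimed inequality for $(a_i)$ is equivalent to $\bigl(\sum_i b_i^s\bigr)^{1/s}\le 1$, because scaling every $a_i$ by $S$ scales both sides by $S$.

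The key step is the pointwise comparison. Since $\sum_i b_i^t=1$ with every term nonnegative, each $b_i^t\le 1$, and hence $0\le b_i\le 1$. For $0\le b\le 1$ and $s\ge t>0$ we have $b^s=b^t\cdot b^{s-t}\le b^t$, since $b^{s-t}\le 1$; when $s=t$ this is trivially an equality. Summing over $i$ gives
\[
\sum_i b_i^s\le \sum_i b_i^t=1,
\]
so $\bigl(\sum_i b_i^s\bigr)^{1/s}\le 1$. Undoing the normalization yields the stated inequality.

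There is no real obstacle here. The only points needing care are the degenerate case where all $a_i=0$ and the observation that the normalization forces each $b_i\le 1$; the latter is exactly what makes the termwise bound $b_i^s\le b_i^t$ valid. If an equality case were wanted later (for instance, to characterize equality in Theorem~\ref{thm:strong}), the same argument shows that for $s>t$ equality requires $b_i^{s-t}=1$ whenever $b_i>0$. This means at most one $a_i$ is nonzero.
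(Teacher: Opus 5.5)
Your proof is correct. The normalization $\sum_i b_i^t=1$ forces $0\le b_i\le 1$, so $b_i^s\le b_i^t$ termwise, which gives $\sum_i b_i^s\le 1$, and homogeneity undoes the scaling.

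The paper does not prove this lemma at all. It quotes it as a standard fact, attributed to Jensen with a reference to Beck, so your argument is a self-contained replacement for a citation rather than a competing route.

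Your version also supplies the equality analysis. For $s>t$, equality forces every $b_i\in\{0,1\}$, and together with $\sum_i b_i^t=1$ this means exactly one $b_i=1$, i.e.\ at most one $a_i$ is nonzero. The paper invokes exactly this fact without justification in the equality case of Theorem~\ref{thm:strong}. There it applies the lemma with $s=3$, $t=2$ to $|\lambda_2|,\dots,|\lambda_n|$ and concludes that at most one of $\lambda_2,\dots,\lambda_n$ is nonzero. Your remark closes that small gap.
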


The following elementary inequality is also needed for our purpose.

\begin{lemma}\label{lem:scalar}
For every real number $r$ satisfying
$ 1\leq r< \sqrt 2$, 
one has
\[
    r^3-(2-r^2)^{3/2}\geq 6(r-1),
\]
with equality if and only if $r=1$. 
\end{lemma}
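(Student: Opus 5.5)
We need to prove that for $1\le r<\sqrt2$, $f(r):=r^3-(2-r^2)^{3/2}-6(r-1)\ge 0$, with equality only at $r=1$. The plan is to check the base point, show $f$ is increasing on the interval, and conclude by monotonicity.

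At $r=1$ we have $f(1)=1-1-0=0$, so equality holds there. Differentiating on $[1,\sqrt2)$ gives
\[
f'(r)=3r^2+3r(2-r^2)^{1/2}-6 .
\]
Since $r\ge 1$, the first term already satisfies $3r^2\ge 3$, and we want the remaining terms to make up the difference.

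The key claim is $g(r):=r^2+r\sqrt{2-r^2}\ge 2$ on $[1,\sqrt2)$. To prove it, substitute $r=\sqrt2\cos\theta$ with $\theta\in(0,\pi/4]$, so that $\sqrt{2-r^2}=\sqrt2\sin\theta$. Then
\[
g=2\cos^2\theta+2\sin\theta\cos\theta=1+\cos2\theta+\sin2\theta=1+\sqrt2\sin(2\theta+\pi/4).
\]
For $2\theta+\pi/4\in(\pi/4,3\pi/4]$ we have $\sin(2\theta+\pi/4)\ge\sqrt2/2$, hence $g\ge 2$. Equality occurs only at the endpoint $\theta=\pi/4$, which corresponds to $r=1$ (the other endpoint $\theta=0$, i.e.\ $r=\sqrt2$, is excluded).

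It follows that $f'(r)=3(g(r)-2)>0$ for $r\in(1,\sqrt2)$. Therefore $f$ is strictly increasing on $[1,\sqrt2)$. Combined with $f(1)=0$, this gives $f(r)>0$ for $1<r<\sqrt2$, so the inequality holds with equality exactly at $r=1$.

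An alternative route would be to isolate the root term and square both sides, but the trigonometric substitution avoids a quartic sign analysis. The only point needing care is the derivative bound, and the substitution handles it directly.
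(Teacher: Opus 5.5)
Your proof is correct and follows the paper's argument: $h(1)=0$, and $h'(r)=3\bigl(r^2+r\sqrt{2-r^2}-2\bigr)>0$ on $(1,\sqrt2)$. The only difference is how you verify $r^2+r\sqrt{2-r^2}\ge 2$: the paper notes that $r\ge 1\ge\sqrt{2-r^2}$ gives $r\sqrt{2-r^2}\ge 2-r^2$ in one line, while your trigonometric substitution reaches the same bound, with the same equality case $r=1$, somewhat more laboriously.
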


\begin{proof}
Let $h(r)=r^3-(2-r^2)^{3/2}-6(r-1)$. Then $h(1)=0$, and
\[
    h'(r)=3r^2+3r\sqrt{2-r^2}-6=3\bigl(r^2+r\sqrt{2-r^2}-2\bigr).
\]
For $r\in[1,\sqrt2 \, )$, we have $r\ge1 \ge\sqrt{2-r^2}$, hence
$r\sqrt{2-r^2}\ge 2-r^2$ and therefore $r^2+r\sqrt{2-r^2}\ge 2$, so
$h'(r)\ge 0$ for every $r\in [1,\sqrt2 \, )$. Thus, $h(r)$ is increasing and $h(r)\ge h(1)=0$.

For the equality case, note that $r\sqrt{2-r^2}= 2-r^2$ forces $r=\sqrt{2-r^2}$, i.e., $r=1$. So $h'(r)=0$ only at
$r=1$; in particular $h'(r)>0$ and $h$ is
strictly increasing on $[1,\sqrt2\,)$, and therefore $h(r)>h(1)=0$ for every
$r\in(1,\sqrt2\,)$. Thus, the equality holds if and only if $r=1$. 
\end{proof}

Now, we are ready to prove Theorem \ref{thm:strong}. 

\begin{proof}[{\bf Proof of Theorem~\ref{thm:strong}}] 
We denote $ \lambda=\lambda(G)$.  
If $\lambda < \sqrt m$, there is nothing to show. We now assume that $\lambda \geq \sqrt m$. 
Applying Lemma~\ref{lem:Jensen}, we have 
\[ -\left( \sum_{i=2}^n\lambda_i^3 \right)^{1/3} \le \left( \sum_{i=2}^n |\lambda_i|^3 \right)^{1/3} \le \left( \sum_{i=2}^n\lambda_i^2 \right)^{1/2}, \]
which gives 
\[
    \sum_{i=2}^n\lambda_i^3
    \geq
    -\left(\sum_{i=2}^n\lambda_i^2\right)^{3/2}.
\]
Combining with $ 6t(G)= \mathrm{tr}\,A(G)^3 = \lambda^3+\sum_{i=2}^n\lambda_i^3$, 
we get 
\[
    6t(G)
    \geq
    \lambda^3-
    \left(\sum_{i=2}^n\lambda_i^2\right)^{3/2}.
\]
Since $  \sum_{i=2}^n\lambda_i^2=2m-\lambda^2$, the above inequality can be written as  
\begin{equation}\label{eq:spectral-lower}
    6t(G)
    \geq
    \lambda^3-(2m-\lambda^2)^{3/2}.
\end{equation}
We denote $ \lambda:= r{\sqrt m}$ for some $r >0$.  
The hypothesis of Theorem \ref{thm:strong} gives $r\ge 1$. Moreover, it is well-known that $\lambda <\sqrt{2m}$, which yields $r< \sqrt{2}$. Thus, 
we conclude that $$ 1\le r< \sqrt{2}. $$ 
Note that the right-hand side of \eqref{eq:spectral-lower} 
is 
\[
    \lambda^3-(2m-\lambda^2)^{3/2}
    =m^{3/2}\left(r^3-(2-r^2)^{3/2}\right).
\]
Applying Lemma~\ref{lem:scalar} gives 
$ r^3-(2-r^2)^{3/2}\geq 6(r-1)$.  
Hence
\begin{equation*} 
      \lambda^3-(2m-\lambda^2)^{3/2}
    \geq
    m^{3/2} \cdot 6(r-1) = 6m \big(\lambda - \sqrt{m}\, \big).
\end{equation*}
Combining with (\ref{eq:spectral-lower}), we obtain that 
$ t(G)\geq m(\lambda-\sqrt m\,)$, as needed. 

It remains to characterize the equality. If $t(G)=m(\lambda-\sqrt m\,)$, then
 $-\sum_{i=2}^n\lambda_i^3 = \sum_{i=2}^n |\lambda_i|^3=(\sum_{i=2}^n\lambda_i^2)^{3/2}$, which forces at most one of $\lambda_2,\dots,\lambda_n$ to be nonzero, and that nonzero eigenvalue is negative. 
Thus, $A(G)$ has at most two nonzero eigenvalues, the one is positive and the other is negative. By Lemma \ref{lem:rank-two-bipartite}, $G$ is a complete bipartite
graph. In this case, $\lambda=\sqrt m$ and $t(G)=0$. 
\end{proof}

Next, we are in a position to prove Theorem \ref{thm-edge-spect-LS}. 

\begin{proof}[{\bf Proof of Theorem \ref{thm-edge-spect-LS}}]
    Assume that $G$ is an $m$-edge graph with $\lambda (G)\ge \sqrt{m} +q$ for some real number $q>0$. Then $G$ is not bipartite since $\lambda (G)> \sqrt{m}$. 
    Applying Theorem \ref{thm:strong}, we get 
    $$ t (G)> m \big( \lambda (G)- \sqrt{m}\, \big)\ge qm. $$
    Next, we show that 
the above bound is tight. The split graph $S_{k,m}$ is defined as follows: For each $1\le k<m$, let $r,t$ be integers such that $m- {k \choose 2}= k t + r$, where $0\le r\le k-1$. 
Let $S_{k,m}$ be an $m$-edge graph 
obtained from the join graph $K_{k}\vee tK_1$ by adding an extra vertex that has exactly $r$ neighbors in the vertex set of $K_k$. By computation (see, e.g., \cite[Lemma 3.3]{LLLZ2026}), we know that 
$$ \lambda (S_{k,m})=\sqrt{m} + \frac{k-1}{2} + O_k(m^{-1/2}). $$
Clearly, we have  $t(S_{k,m})= {k \choose 3} + t {k \choose 2} + {r \choose 2} = \frac{k-1}{2}m-O_k(1)$.  
Comparing these with Theorem \ref{thm-edge-spect-LS}, the split graph  construction corresponds to $q\approx \frac{k-1}{2}$, while $t(S_{k,m})\approx \frac{k-1}{2}m = q\cdot m$. The two sides are asymptotically equal, 
so the coefficient $q$ in the bound $t(G)$ cannot be improved. 
\end{proof}

\section{Books and kites in Nosal graphs}
\label{sec:4}

In this section, we prove Theorem~\ref{thm:book-free} and Theorem~\ref{thm-C4+}.
We begin with two theorems that will be used in both proofs. 
Let $G$ be a graph with $m$ edges, and let $(x_v)_{v\in V(G)}$ be the Perron eigenvector for $A(G)$, normalized so that $\| \bm{x}\|_{\infty} = \max_{v\in V(G)}x_v=1$. 
Fix a vertex $u_*$ with $x_{u_*}=1$ and denote $U:=N(u_*)$ and $W:=V(G)\setminus (\{u_*\}\cup U)$.  
For a vertex $v$ and a set $U$, we write $d_U(v):=|N(v)\cap U|$.   
We will frequently use the fact that if $G$ is $B_{r+1}$-free, then  $d_U(u)\le r$ for every $u\in U$. Otherwise, if $d_U(u)\ge r+1$, then the edge $u_*u$ spans a book of size $r+1$, a contradiction. 
The starting point is to estimate the contribution of edges of $U$ to the spectral surplus $\lambda^2(G)-m$.

\begin{theorem} \label{thm:identity}
With the notation above, we have 
\[
\lambda^2(G)-m=\sum_{uv\in E(U)}(x_u+x_v-1)-\Phi,
\]
where
\[
\Phi:=\sum_{w\in W}\Bigl(d_U(w)(1-x_w)+\frac12 d_W(w)\Bigr) \ge 0.
\]
\end{theorem}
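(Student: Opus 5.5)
The plan is to expand $\lambda^2$ through two applications of the eigenvalue equation, count $m$ by splitting the edges according to the partition $\{u_*\}\cup U\cup W$, and subtract. Only the eigen-equations $\lambda x_v=\sum_{y\in N(v)}x_y$ are needed, together with $0\le x_v\le 1$. These hold for a nonnegative Perron eigenvector even when $G$ is disconnected; vertices outside the component of $u_*$ simply lie in $W$ with weight possibly $0$.

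First, the eigen-equation at $u_*$, with $x_{u_*}=1$, gives $\lambda=\sum_{u\in U}x_u$. Multiplying by $\lambda$ and applying the eigen-equation at each $u\in U$ gives
\[
\lambda^2=\sum_{u\in U}\lambda x_u=\sum_{u\in U}\sum_{y\in N(u)}x_y .
\]
Every neighbour $y$ of $u\in U$ is either $u_*$, a vertex of $U$, or a vertex of $W$. I then split the double sum into three parts:
\begin{itemize}
\item the terms $y=u_*$ contribute $|U|$;
\item the terms $y\in U$ contribute $\sum_{uv\in E(U)}(x_u+x_v)$, since each edge inside $U$ is seen once from each endpoint;
\item the terms $y\in W$ contribute $\sum_{w\in W}d_U(w)x_w$, after swapping the order of summation.
\end{itemize}
Hence
\[
\lambda^2=|U|+\sum_{uv\in E(U)}(x_u+x_v)+\sum_{w\in W}d_U(w)\,x_w .
\]

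Second, I count the edges. No edge joins $u_*$ to $W$, so every edge is of one of four types: between $u_*$ and $U$, inside $U$, between $U$ and $W$, or inside $W$. Therefore
\[
m=|U|+e(U)+\sum_{w\in W}d_U(w)+\frac12\sum_{w\in W}d_W(w).
\]

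Subtracting the second display from the first, the terms $|U|$ cancel. Writing $e(U)=\sum_{uv\in E(U)}1$, the edge terms combine into $\sum_{uv\in E(U)}(x_u+x_v-1)$, and the $W$-terms combine into $-\sum_{w\in W}\bigl(d_U(w)(1-x_w)+\tfrac12 d_W(w)\bigr)=-\Phi$, which is exactly the claimed identity.

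Finally, $\Phi\ge 0$ because each summand is nonnegative, using $x_w\le \max_v x_v=1$. There is no real obstacle here; the only care needed is the bookkeeping, namely that $W$ has no neighbours of $u_*$ and that edges inside $W$ are counted twice in $\sum_{w\in W} d_W(w)$, which accounts for the factor $\tfrac12$.
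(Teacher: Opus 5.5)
Your proposal is correct and follows essentially the same route as the paper: you expand $\lambda^2=\sum_{u\in U}\lambda x_u$ via the Perron equations at $u_*$ and at each $u\in U$, split the neighbours into $u_*$, $U$, $W$, and subtract the edge count $m=|U|+e(U)+e(U,W)+e(W)$. The paper writes the $U$-term as $\sum_{v\in U}d_U(v)x_v$ before regrouping over edges, which is only a cosmetic difference, and your remark about nonnegative eigenvectors for disconnected $G$ is a harmless extra.
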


\begin{proof}
Since $x_{u_*}=1$, the Perron equation at $u_*$ gives
\[
\lambda(G)=\sum_{u\in U}x_u.
\]
Now sum the Perron equations over all vertices of $U$. For each $u\in U$,
\[
\lambda(G)x_u=1+\sum_{v\in N_U(u)}x_v+\sum_{w\in N_W(u)}x_w,
\]
because each $u\in U$ is adjacent to $u_*$, its neighbors inside $U$ are exactly $N_U(u)$, and its neighbors inside $W$ are exactly $N_W(u)$.
Summing over $u\in U$ gives
\[ \lambda^2 (G) = 
\lambda(G)\sum_{u\in U}x_u
=
|U|+\sum_{u\in U}\sum_{v\in N_U(u)}x_v+\sum_{u\in U}\sum_{w\in N_W(u)}x_w.
\]
By the double counting, in the second term, the weight $x_v$ appears exactly $d_U(v)$ times, and in the third term, the weight $x_w$ appears exactly $d_U(w)$ times. 
Hence
\[
\lambda^2 (G)=|U|+\sum_{v\in U}d_U(v)x_v+\sum_{w\in W}d_U(w)x_w.
\] 
Moreover, we observe that 
\[
m=|U|+e(U)+e(U,W)+e(W).
\] 
On the other hand, we have
\[
\sum_{uv\in E(U)}(x_u+x_v-1)=\sum_{v\in U}d_U(v) x_v -e(U),
\]
and 
\[ \Phi =e(U,W) - \sum_{w\in W} d_U(w)x_w + e(W). \]
From the above four identities, it is easy to see that 
\[
\lambda^2(G)-m=\sum_{uv\in E(U)}(x_u+x_v-1)- \Phi,
\]
which is the required identity.
\end{proof}

We now show that $\Phi$ controls a half amount of missing weights of vertices of $U$.

\begin{theorem} \label{thm:halfmissing}
For each $u\in U$, we define the missing-weights of $u$ as  
$$ Y(u):=\sum_{w\in W\setminus N(u)}x_w. $$
Then  
\[
\Phi\ge \frac12\sum_{u\in U}x_uY(u).
\]
\end{theorem}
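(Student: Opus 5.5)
The plan is to rewrite the right-hand side as a sum over $W$ instead of over $U$, and then bound it vertex by vertex in $W$ using the Perron equation at each $w\in W$. The argument uses only $0< x_v\le 1$ and the eigenvector equation; no structural hypothesis on $G$ is needed.

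\emph{Step 1 (double counting).} By definition of $Y(u)$,
\[
\sum_{u\in U}x_uY(u)=\sum_{u\in U}\ \sum_{w\in W\setminus N(u)}x_ux_w=\sum_{w\in W}x_w\sum_{u\in U\setminus N(w)}x_u .
\]
Set $S_w:=\sum_{u\in N_U(w)}x_u$. Since $\lambda=\sum_{u\in U}x_u$ (Perron equation at $u_*$, as in the proof of Theorem~\ref{thm:identity}), the inner sum equals $\lambda-S_w$. Therefore
\[
\frac12\sum_{u\in U}x_uY(u)=\frac12\sum_{w\in W}x_w(\lambda-S_w).
\]

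\emph{Step 2 (Perron equation at $w\in W$).} The vertex $w$ is not adjacent to $u_*$, so its neighbours lie in $U\cup W$. Hence $\lambda x_w=S_w+T_w$, where $T_w:=\sum_{w'\in N_W(w)}x_{w'}$. Since every coordinate is at most $1$, we have $T_w\le d_W(w)$ and $S_w\le d_U(w)$. This gives
\[
x_w(\lambda-S_w)=S_w+T_w-x_wS_w=S_w(1-x_w)+T_w\le d_U(w)(1-x_w)+d_W(w).
\]

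\emph{Step 3 (comparison term by term).} Halving the bound from Step 2 gives
\[
\tfrac12 x_w(\lambda-S_w)\le \tfrac12 d_U(w)(1-x_w)+\tfrac12 d_W(w)\le d_U(w)(1-x_w)+\tfrac12 d_W(w),
\]
where the last inequality uses $1-x_w\ge 0$. Summing over $w\in W$ yields $\frac12\sum_{u}x_uY(u)\le\Phi$.

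There is no genuine obstacle here. The only point needing care is the exchange of summation in Step 1: it turns the missing weights into $\lambda-S_w$, and this is what makes the Perron equation at $w$ directly applicable. The factor $\tfrac12$ leaves slack, since the argument actually controls the full quantity up to a $\tfrac12 d_U(w)(1-x_w)$ excess.
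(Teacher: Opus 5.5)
Your proof is correct and takes essentially the same route as the paper. Both arguments exchange the order of summation to get $\sum_{u\in U}x_uY(u)=\sum_{w\in W}x_w\bigl(\lambda-S_w\bigr)$, then apply the Perron equation at each $w\in W$ with $S_w\le d_U(w)$ and $T_w\le d_W(w)$; the only difference is that the paper discards the slack $\tfrac12(1-x_w)S_w$ where you discard $\tfrac12 d_U(w)(1-x_w)$.
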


\begin{proof}
Fix a vertex $w\in W$, we write
\[ X(w):=\sum_{u\in U\setminus N(w)}x_u,\qquad 
S(w):=\sum_{z\in N_W(w)}x_z.
\]
Since $\sum_{u\in U}x_u=\lambda(G)$, the Perron equation at $w$ becomes
\[
\lambda(G)x_w=\sum_{u\in N_U(w)}x_u+S(w)=\bigl(\lambda(G)-X(w)\bigr)+S(w).
\]
Hence
\[
S(w)=X(w)-\lambda(G)(1-x_w).
\]
Because every Perron coordinate is at most $1$, we have
\[
d_U(w)\ge \sum_{u\in N_U(w)}x_u=\lambda(G)-X(w),
\qquad
 d_W(w)\ge S(w).
\]
Therefore
\begin{align*}
d_U(w)(1-x_w)+\frac12 d_W(w) 
&\ge (\lambda(G)-X(w))(1-x_w)+\frac12 S(w)\\
&=(\lambda(G)-X(w))(1-x_w)+\frac12\bigl(X(w)-\lambda(G)(1-x_w)\bigr)\\
&=\frac12 x_wX(w)+\frac12(1-x_w)\bigl(\lambda(G)-X(w)\bigr)\\
&\ge \frac12 x_wX(w),
\end{align*}
where the last inequality holds since $X(w)\le \sum_{u\in U}x_u=\lambda(G)$.

Now summing over all $w\in W$, we obtain 
\[
\Phi \ge \frac12\sum_{w\in W}x_wX(w).
\]
Expanding the last sum and switching the order of summation,
\begin{align*}
\sum_{w\in W}x_wX(w)
=\sum_{w\in W}x_w\sum_{u\in U\setminus N(w)}x_u 
=\sum_{u\in U}x_u\sum_{w\in W\setminus N(u)}x_w =\sum_{u\in U}x_uY(u).
\end{align*} 
Thus, we conclude that 
$\Phi\ge \frac12\sum_{u\in U}x_uY(u)$, 
as claimed.
\end{proof}

\begin{remark}
    The above Theorem~\ref{thm:halfmissing}, as well as the previous Theorem~\ref{thm:identity}, do not use the $B_{r+1}$-freeness of $G$. Both of them rely
only on the Perron equation and the normalization $\|\bm{x}\|_{\infty} =1$.  
\end{remark}

\subsection{Proof of Theorem \ref{thm:book-free}}

We are going to prove Theorem \ref{thm:book-free}. 
An edge $uv\in E(U)$ is called \emph{bad} if
$x_u+x_v>1$. Let $\mathcal{B}$ be the set of all bad edges. 
Only bad edges of $U$ contribute positively to the identity in Theorem~\ref{thm:identity}.

\begin{lemma} \label{lem:Yu-lower}
Let $G$ be a $B_{r+1}$-free graph, and let $uv\in E(U)$ be a bad edge. Then
\[
Y(u)\ge (\lambda(G)x_v-2r)_+
\quad \text{and} \quad 
Y(v)\ge (\lambda(G)x_u-2r)_+,
\]
where $(t)_+:=\max\{t,0\}$.
\end{lemma}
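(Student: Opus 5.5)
By the symmetry between $u$ and $v$, it suffices to prove $Y(u)\ge \lambda(G)x_v-2r$; the bound with the positive part then follows because $Y(u)\ge 0$ trivially. The plan is to write $\lambda(G)x_v$ via the Perron equation at $v$ and split the neighbourhood of $v$ according to the partition $\{u_*\}\cup U\cup W$:
\[
\lambda(G)x_v = x_{u_*}+\sum_{z\in N_U(v)}x_z+\sum_{w\in N_W(v)}x_w = 1+\sum_{z\in N_U(v)}x_z+\sum_{w\in N_W(v)}x_w .
\]
The idea is that the $W$-part of $N(v)$ either lies outside $N(u)$, and so contributes to $Y(u)$, or lies in $N_W(u)\cap N_W(v)$, which is small because of the forbidden book.

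First, the $U$-part. Since $G$ is $B_{r+1}$-free, $d_U(v)\le r$, and every coordinate is at most $1$. Hence
\[
\sum_{z\in N_U(v)}x_z\le r .
\]

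Second, the $W$-part. Split $N_W(v)$ into $N_W(v)\setminus N(u)$ and $Q_{uv}:=N_W(u)\cap N_W(v)$. The first piece is a subset of $W\setminus N(u)$, so its weight is at most $Y(u)$. For the second piece, every common neighbour of $u$ and $v$ forms a triangle on the edge $uv$. These common neighbours include $u_*$, the vertices of $N_U(u)\cap N_U(v)$, and the vertices of $Q_{uv}$. So $B_{r+1}$-freeness gives
\[
1+|N_U(u)\cap N_U(v)|+|Q_{uv}|\le r ,
\]
and in particular $|Q_{uv}|\le r-1$. Since each $x_w\le 1$, the weight of $Q_{uv}$ is at most $r-1$.

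Combining the two parts,
\[
\lambda(G)x_v\le 1+r+Y(u)+(r-1)=2r+Y(u),
\]
which is the claimed bound $Y(u)\ge\lambda(G)x_v-2r$.

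No step looks like a real obstacle. The only care needed is in the book count: $u_*$ must be included among the common neighbours of $u$ and $v$, and that is exactly what brings the bound on $|Q_{uv}|$ down to $r-1$. The badness assumption $x_u+x_v>1$ is not needed for this inequality; presumably it is stated only because the lemma will be applied to bad edges later.
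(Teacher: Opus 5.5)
Your proof is correct and follows the paper's argument essentially step for step: you use the Perron equation at $v$, the bound $\sum_{z\in N_U(v)}x_z\le d_U(v)\le r$, the bound $|N_W(u)\cap N_W(v)|\le r-1$ coming from the triangle $u_*uv$, and the inclusion $N_W(v)\setminus N_W(u)\subseteq W\setminus N(u)$. Your remark that the badness of $uv$ is never used is also accurate, since the paper's proof does not use it either.
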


\begin{proof}
We prove the first inequality; the second follows by symmetry.

Apply the Perron equation at the vertex $v$. Since $v\in U$, $u_*v\in E(G)$ and $x_{u_*}=1$, we have
\[
\lambda(G)x_v=1+\sum_{z\in N_U(v)}x_z+\sum_{w\in N_W(v)}x_w.
\]
Since $G$ is $B_{r+1}$-free, we have $\sum_{z\in N_U(v)}x_z\le d_U(v)\le r$. 
Then 
\[
\sum_{w\in N_W(v)}x_w\ge \lambda(G) x_v -1-r.
\]
Since the edge $uv$ already lies in the triangle $u_*uv$, and $G$ is $B_{r+1}$-free, we get $|N_W(u)\cap N_W(v)|\le r-1$. 
Every Perron coordinate is at most $1$, so 
\begin{align*}
\sum_{w\in N_W(v)\setminus N_W(u)}x_w
\ge \sum_{w\in N_W(v)}x_w-(r-1) 
\ge \lambda(G)x_v -2r.
\end{align*}
But $N_W(v)\setminus N_W(u)\subseteq W\setminus N(u)$, so
\[
Y(u)=\sum_{w\in W\setminus N(u)}x_w\ge \lambda(G) x_v-2r.
\]
Since $Y(u)\ge 0$ automatically, this implies
$Y(u)\ge (\lambda(G)x_v-2r)_+$. 
\end{proof}

The next lemma is an elementary inequality. 
This lemma indicates that the assumption $m\ge (4r)^2$ in Theorem \ref{thm:book-free} is the natural limit of this particular estimate. 

\begin{lemma} \label{lem:algebra}
For all $a,b\in[0,1]$ with $a+b>1$, we have 
\[
a+b-1
\le
\frac12\Bigl(a(4b-2)_+ + b(4a-2)_+\Bigr).
\]
\end{lemma}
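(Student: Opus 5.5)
By the symmetry of the statement in $a$ and $b$, we may assume $a\ge b$. Since $a+b>1$ and $a\le 1$, we get $b>0$. Also $a\ge b$ and $a+b>1$ give $a>\tfrac12$, so $(4a-2)_+=4a-2>0$. We then split into cases according to whether $b\ge\tfrac12$ or $b<\tfrac12$, since this decides whether the term $(4b-2)_+$ is active.

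\emph{Case $b<\tfrac12$.} Here the right-hand side equals $\tfrac12 b(4a-2)=b(2a-1)$, so it suffices to show $a+b-1\le b(2a-1)$. This is equivalent to $a-1-2ab+2b\le 0$, which factors as $(a-1)+2b(1-a)=(1-a)(2b-1)\le 0$. The factorization holds because $a\le 1$ and $b<\tfrac12$.

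\emph{Case $b\ge\tfrac12$.} Both positive parts are now active, and the right-hand side equals
\[
\tfrac12\bigl(a(4b-2)+b(4a-2)\bigr)=4ab-a-b.
\]
The inequality therefore becomes $a+b-1\le 4ab-a-b$, that is, $4ab-2a-2b+1\ge 0$. The left side is $(2a-1)(2b-1)$, which is nonnegative because $a,b\ge\tfrac12$.

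Each case reduces to a factored expression of fixed sign, so the lemma follows. I do not expect any obstacle. The only point needing care is the reduction to $a\ge b$, which is what lets us use $a>\tfrac12$ to remove the positive part in $(4a-2)_+$ in both cases.
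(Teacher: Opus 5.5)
Your proof is correct and matches the paper's argument: the paper also splits on whether $a,b\ge\tfrac12$ (with the mixed case handled by symmetry) and uses the same two factorizations $(2a-1)(2b-1)\ge 0$ and $(1-a)(1-2b)\ge 0$. Your initial reduction to $a\ge b$ simply packages the paper's symmetric third case up front.
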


\begin{proof}
Since $a+b>1$ excludes the case $a<\tfrac12$ and $b<\tfrac12$,  
we distinguish three cases.

\smallskip
\noindent\textbf{Case 1:} $a\ge \tfrac12$ and $b\ge \tfrac12$.
Then both positive parts are active, so
\[
\frac12\Bigl(a(4b-2)+b(4a-2)\Bigr)=4ab-a-b.
\]
The desired inequality follows by $(4ab-a-b)-(a+b-1)=(2a-1)(2b-1)\ge 0$.

\smallskip
\noindent\textbf{Case 2:} $a\ge \tfrac12$ and $b<\tfrac12$.
Then $(4b-2)_+=0$, so the right-hand side becomes
\[
\frac12\,b(4a-2)=b(2a-1).
\]
Again the inequality holds since $b(2a-1)-(a+b-1)=(1-a)(1-2b)\ge 0$.

\smallskip
\noindent\textbf{Case 3:} $a<\tfrac12$ and $b\ge \tfrac12$.
This is symmetric to Case 2.
\end{proof}

Now, we are ready to prove Theorem~\ref{thm:book-free}, and list the key steps in Figure \ref{fig:book-proof}. 

\begin{figure}[htbp]
\centering
\begin{tikzpicture}[
    node distance=5mm and 12mm,
    box/.style={rectangle, rounded corners, draw, align=center,
                text width=66mm, inner sep=3pt, font=\footnotesize},
    arr/.style={-{Stealth[length=1.8mm]}, semithick}
  ]

  \node[box] (setup)
        {$G$ is $B_{r+1}$-free, $m\ge(4r)^2$; WLOG, $\lambda:=\lambda(G)\ge 4r$.
         Take a vertex $u_*$ with $x_{u_*}=1$, set $U=N(u_*)$ and $W=V\setminus(\{u_*\}\cup U)$};
  \node[box, below=of setup] (tools)
        {Two general tools (Theorems~\ref{thm:identity} and \ref{thm:halfmissing}):
         Surplus identity: $\! \lambda^2-m=\sum_{uv}(x_u+x_v-1)-\Phi$
         \newline Half missing-weights: $\Phi\ge\frac12\sum_{u\in U}x_uY(u)$};

  \node[box, anchor=north, at={($(setup.north)+(78mm,0)$)}] (alg) {Using the inequality (Lemma~\ref{lem:algebra}) and local bound (Lemma~\ref{lem:Yu-lower}): for each bad edge $uv$, \\  
         $x_u+x_v-1\le\frac1{2r}\big(x_uY(u)+x_vY(v)\big)$};
         
  \node[box, below=of alg] (sum)
        {Summing over $\mathcal B$ and 
        using $ d_U(u)\le r$ gives 
         $\sum_{uv}(x_u+x_v-1)\le\frac12\sum_{u\in U}x_uY(u)\le\Phi$. \\
        The surplus identity then gives $\lambda\le\sqrt m$}; 

  \draw[arr] (setup)--(tools);

  \draw[arr] (alg)--(sum);

  \draw[arr] (tools.east) -| ($(tools.east)!0.5!(alg.west)$) |- (alg.west);
\end{tikzpicture}
\caption{The flowchart of the proof of Theorem~\ref{thm:book-free}.}
\label{fig:book-proof}
\end{figure}

\begin{proof}[{\bf Proof of Theorem~\ref{thm:book-free}}]
Assume that $m\ge (4r)^2$ and $G$ is a $B_{r+1}$-free graph with $m$ edges. Throughout the proof, we may assume that $G$ has no isolated vertices. 
We may assume that $\lambda (G)\ge 4r$, since otherwise $\lambda(G)< 4r\le \sqrt{m}$, we are done. 
Our goal is to prove that $\lambda(G)\le \sqrt{m}$, with equality if and only if $G$ is a complete bipartite graph. We denote $\lambda :=\lambda (G)$. 

If $r=0$, then $G$ is triangle-free, so $E(U)=\emptyset$. 
Theorem~\ref{thm:identity} then yields 
\[ \lambda^2-m=-\Phi\le 0, \] 
so $\lambda \le \sqrt{m}$, with equality if and only if $\Phi =0$. 
If $W=\emptyset$, then we are done. 
If $W\neq \emptyset$, then $\Phi =0$ forces $e(W)=0$ and $d_U(w)(1-x_w)=0$ for each $w\in W$. The latter gives $d_U(w)=0$ or $x_w=1$.
If $d_U(w)=0$, then $w$ is isolated, a contradiction. 
Hence, $x_w=1$ and the Perron equation at $w$ forces $N(w)=U$.  
Thus, $G$ is a complete bipartite graph with parts $U$ and $\{u_*\}\cup W$.  

Now we assume that $r\ge 1$. 
Recall that $\mathcal{B}$ denotes the set of bad edges of $U$, that is,
\[
\mathcal{B}:=\{uv\in E(U):x_u+x_v>1\}.
\]
For each bad edge $uv\in\mathcal{B}$, setting $a:=x_u$ and $b:=x_v$ in Lemma~\ref{lem:algebra}, we get 
\begin{equation} \label{eq-after-4-4}
x_u + x_v-1 
\le
\frac12\Bigl(x_u(4x_v-2)_+ + x_v(4x_u-2)_+\Bigr) 
\le \frac{1}{2r} \Big( x_u(\lambda x_v-2r)_+ + x_v(\lambda x_u-2r)_+\Big),
\end{equation}
where the second inequality holds since $\lambda \ge 4r$.

Applying Lemma~\ref{lem:Yu-lower} gives 
$(\lambda x_v-2r)_+\le Y(u)$ and $(\lambda x_u -2r)_+\le Y(v)$. 
Hence, for each bad edge $uv$, we conclude that 
\[
x_u+x_v-1\le \frac{1}{2r}\Big( x_uY(u)+x_vY(v) \Big).
\]
Summing this over all bad edges gives
\begin{align*}
\sum_{uv\in \mathcal{B}}(x_u+x_v-1)
\le \frac1{2r}\sum_{uv\in \mathcal{B}}\bigl(x_uY(u)+x_vY(v)\bigr) 
=\frac1{2r}\sum_{u\in U}x_uY(u)\,d_{\mathcal{B}}(u),
\end{align*}
where $d_{\mathcal{B}}(u)$ is the number of bad edges of $\mathcal{B}$ incident with $u$. 

Since $G$ is $B_{r+1}$-free, we get $d_{\mathcal{B}}(u)\le d_U(u)\le r$, so 
\[
\sum_{uv\in \mathcal{B}}(x_u+x_v-1)
\le
\frac12\sum_{u\in U}x_uY(u) \le \Phi,
\]
where the last inequality holds by  Theorem~\ref{thm:halfmissing}.

Note that every edge of $E(U)\setminus\mathcal{B}$ satisfies $x_u+x_v-1\le 0$. We conclude that 
\[
\sum_{uv\in E(U)}(x_u+x_v-1)
\le
\sum_{uv\in \mathcal{B}}(x_u+x_v-1)
\le \Phi.
\]
Finally, applying Theorem~\ref{thm:identity} yields 
$
\lambda^2-m 
\le 0$. 
Thus, we get $\lambda \le \sqrt{m}$.

\smallskip 
\noindent 
{\bf Equality case.}
It remains to characterize the equality case. Suppose that $\lambda^2=m$. First, 
we claim that there are no bad edges in $U$. Suppose for a contradiction that $uv\in\mathcal B$. Since $x_u+ x_v>1$, one of $x_u,x_v$ is larger than $1/2$. By symmetry, we may assume $x_v>1/2$. Then
\[
\lambda(G)x_v-2r\ge 4rx_v-2r>0.
\]
In the proof of Lemma~\ref{lem:Yu-lower}, the estimate
$Y(u)\ge \lambda(G)x_v-2r$ 
used the inequality
\[
\sum_{z\in N_U(v)}x_z\le d_U(v)\le r.
\]
Tightness of the local estimate for the bad edge $uv$ therefore forces
\[
\sum_{z\in N_U(v)}x_z=r.
\]
Since $d_U(v)\le r$ and every Perron coordinate is at most $1$, this implies that every neighbor of $v$ inside $U$ has Perron coordinate $1$. In particular, $u\in N_U(v)$ gives $x_u=1$. 
But then the inequality in Lemma~\ref{lem:algebra} is strict for the pair $(x_u,x_v)$, since $x_u=1$ and $x_v>1/2$. 
This contradicts the required tightness of the local inequality for the bad edge $uv$. Hence
$\mathcal B= \emptyset$. 

Second, we will derive a contradiction with $\lambda \ge 4r$. 
Since there are no bad edges, every edge $uv\in E(U)$ satisfies $x_u+x_v-1\le 0$. Equality in Theorem~\ref{thm:identity} and $\Phi\ge 0$ now imply
\[
\Phi=0
\qquad\text{and}\qquad
x_u+x_v=1\quad\text{for every }uv\in E(U).
\]
As discussed above, the condition $\Phi=0$ implies that $e(W)=0$ and $d_U(w)(1-x_w)=0$ for every $w\in W$. 
Since $G$ has no isolated vertices, every vertex $w\in W$ has Perron coordinate $1$ and is adjacent to every vertex of $U$. Therefore, 
the set $W\cup\{u_*\}$ is independent, $W\cup\{u_*\}$ is complete to $U$, and the only possible remaining edges are the edges inside $U$.

If $e(U)=0$, then $G$ is complete bipartite, as desired. We now consider the case that $e(U)\ge 1$.  Every edge of $U$ has all vertices of $W\cup\{u_*\}$ as common neighbors, so
\[
|W| +1 \le r.
\]  
Using the Perron equations on the vertex $u_*$, we get
\[ \lambda^2= 
\lambda^2 x_{u_*}=\sum_{v\in U} \sum_{w\in N(v)} x_w 
=|U| (|W| +1)+ \sum_{uv\in E(U)} (x_u+x_v).
\]
Since $m=|U| (|W| +1)+ e(U)$ and 
 $\lambda^2=m$, it follows that
\begin{equation} \label{eq:eU}
\sum_{uv\in E(U)} (x_u+x_v)=e(U).
\end{equation}
Next, we will multiply $\lambda$ both sides of (\ref{eq:eU}). 
For every $u\in U$, we have
\[
\lambda \, x_u=|W| +1 +\sum_{z\in N_U(u)}x_z.
\]
Therefore, we get from (\ref{eq:eU}) that 
\begin{equation} \label{eq:lambda-mul} 
 \lambda \sum_{uv\in E(U)}\bigl( x_u+ x_v\bigr) 
=
e(U) \cdot 2(|W| +1)+
\sum_{uv\in E(U)}\left(\sum_{z\in N_U(u)}x_z+
\sum_{z\in N_U(v)}x_z\right).
\end{equation}
The last double sum can be regrouped as
\[
\sum_{uv\in E(U)}\bigl(d_U(u)x_v+d_U(v)x_u\bigr) \le r\sum_{uv\in E(U)}(x_u+x_v).
\]
where the last inequality holds since $d_U(u), d_U(v)\le r$. 
Thus, the previous (\ref{eq:lambda-mul}) yields 
\[
\lambda \sum_{uv\in E(U)}(x_u+x_v) \le e(U)\cdot 2(|W| +1)+r \sum_{uv\in E(U)}(x_u+x_v).
\]
Using (\ref{eq:eU}) and $e(U)\ge 1$, the above   simplifies to 
$\lambda \le 2(|W| +1) +r \le 3r$ since $|W|+1\le r$. 
This contradicts the assumption $\lambda(G)\ge 4r$. Hence, the case $e(U)\ge 1$ does not occur. 
\end{proof}

\subsection{Proof of Theorem \ref{thm-C4+}}

\label{sec:kites}

First, we show that the constant $\frac{1}{8}$ in Theorem \ref{thm-C4+} is best possible.

\begin{example} \label{exam-sharp-constant}
Let $s,t$ be positive integers with $t\to \infty$ and $s=4t$.
Let $G_t$ be obtained from $K_{s,t}$ by adding an edge inside   
the vertex part of size $s$. Then 
$G_t$ is Nosal and 
$$\texttt{\#}  C_4^+(G_t)= 
\left(\frac{1}{8}- o(1) \right)e(G_t). $$
\end{example}

\begin{proof}
Clearly, we have 
$      e(G_t)=st+1=4t^2+1.$ 
Let \(S,T\) be the two parts of \(K_{s,t}\), where \(|S|=s\) and \(|T|=t\).
Consider the unit vector \( \bm{y}\) defined by
\[
        y_v:=\frac{1}{\sqrt{2s}}\quad(\text{if}~v\in S),
        \qquad
        y_v:=\frac{1}{\sqrt{2t}}\quad(\text{if}~v\in T).
\]
The Rayleigh quotient gives
\[
\begin{aligned}
        \lambda(G_t)
        \ge
        \bm{y}^T A(G_t)\bm{y}                               =
        \bm{y}^T A(K_{s,t}) \bm{y}+\frac1s                   =
        \sqrt{st}+\frac1s
        =
        2t+\frac{1}{4t}.
\end{aligned}
\]
Therefore, we have 
\[
        \lambda(G_t)^2
        \ge
        \left(2t+\frac{1}{4t}\right)^2
        =
        4t^2+1+\frac{1}{16t^2}
        >
        4t^2+1
        =
        e(G_t).
\]
Thus \(G_t\) is a Nosal graph.

Finally, the only edge of \(G_t\) with at least two common neighbors is the
added edge inside \(S\), and its common neighbors are precisely all the \(t\)
vertices of \(T\).  Hence, we get 
\[
        \texttt{\#}{C_4^+}(G_t)
        =
        {t\choose2}
        =
        \left(\frac18 - o(1)\right)e(G_t).
\]
Thus, the constant \(1/8\) in Theorem \ref{thm-C4+} cannot be improved. 
\end{proof}

We summarize the structure of the lower bound of Theorem \ref{thm-C4+}  in Figure~\ref{fig:kite-proof}.

\begin{figure}[htbp]
\centering
\begin{tikzpicture}[
    node distance=5mm and 12mm,
    box/.style={rectangle, rounded corners, draw, align=center,
                text width=66mm, inner sep=3pt, font=\footnotesize},
    arr/.style={-{Stealth[length=1.8mm]}, semithick}
  ]

  \node[box] (assume)
        {Setup: Assume for contradiction that $\lambda^2>m$ but $\texttt{\#}\,C_4^+(G)\le(\tfrac18-\varepsilon)\lambda^2$};
  \node[box, below=of assume] (perron)
        {Perron decomposition: take $u_*$ with $x_{u^*}=1$, set $U=N(u_*)$ and $W=V\setminus(\{u_*\}\cup U)$};
  \node[box, below=of perron] (tools)
        {Two general tools (Theorems~\ref{thm:identity} and \ref{thm:halfmissing}): 
       Surplus identity: $\lambda^2 -m= \sum_{uv} 
       (x_u+x_v -1) - \Phi$
   \newline Half missing-weights:  $\Phi \ge \frac{1}{2}\sum_{u\in U} x_u Y(u)$ };
  \node[box, below=of tools] (ord)
        {Split bad edges of $U$ into ordinary and exceptional; ordinary edges are absorbed: $\sum_{\mathrm{ord}}(x_u+x_v-1)\le(1-2\delta+o(1))\Phi$};
  \node[box, below=of ord] (dec)
        {If $\mathcal E=\emptyset$, i.e., all bad edges ordinary, then $\lambda^2-m< 0$, a contradiction. Hence $\mathcal E\neq\emptyset$};

  \node[box, right=of assume, anchor=north,
        at={($(assume.north)+(66mm,0)$)}] (exc)
        {An exceptional edge must exist: $1\le |\mathcal E|=O(\lambda)$ and $\Phi=O(\lambda)$, so missing weights between $U$ and $W$ is $O(\lambda)$ (Claim~\ref{cl:big-O})};
  \node[box, below=of exc] (abc)
        {Fix an exceptional edge $uv \in E(U)$;  
         denote \\
          $A\!= \!\!\!\!\sum\limits_{z\in U,z\sim u} \!\!\! x_z,~
        B \!=\!\!\!\!\sum\limits_{z\in U,z\sim v} \!\!\! x_z ,~
        C\!= \!\!\!\!\!\sum\limits_{w\in W,w\sim u,v} \!\!\!\!\! x_w$. Then $A+B+2C\ge\lambda-O(1)$};
  \node[box, below=of abc] (count)
        {Establish local kite-count: $\texttt{\#}\,C_4^+\ge\tfrac12 \big(A^2+B^2+C^2 \big)+AC+BC-O(\lambda)$ (Claim~\ref{cl:lower-copies})};
  \node[box, below=of count] (min)
        {Minimizing the above under $A+B+2C \ge \lambda -O(1)$ yields $\texttt{\#}\,C_4^+ \ge\tfrac18\lambda^2-O(\lambda)$};
  \node[box, below=of min] (contra)
        {Contradiction; the sharp constant $\tfrac18$ is proved};

  \draw[arr] (assume)--(perron);
  \draw[arr] (perron)--(tools);
  \draw[arr] (tools)--(ord);
  \draw[arr] (ord)--(dec);

  \draw[arr] (exc)--(abc);
  \draw[arr] (abc)--(count);
  \draw[arr] (count)--(min);
  \draw[arr] (min)--(contra);

  \draw[arr] (dec.east) -| ($(dec.east)!0.5!(abc.west)$) |- (exc.west);
\end{tikzpicture}
\caption{The flowchart of the lower bound of Theorem~\ref{thm-C4+}.}
\label{fig:kite-proof}
\end{figure}

\smallskip 
Denote $  \tau(uv):=|N(u)\cap N(v)|$ for every  $uv\in E(G)$. 
Then
\begin{equation}\label{eq:kite-book-count}
        \texttt{\#}{C_4^+}(G)=\sum_{uv\in E(G)}{\tau(uv)\choose 2}.
\end{equation}  
For an edge \(uv\in E(U)\), we denote their  common neighbors in $W$ by 
$ Q_{uv}:=N_W(u)\cap N_W(v)$ 
and their common weights in $W$ by 
$C_{uv}:=\sum_{w\in Q_{uv}}x_w$. 
Moreover, we denote 
\[
        L_u(uv):=1+\sum_{z\in N_U(u)}x_z +C_{uv},
        \qquad
        L_v(uv):=1+ \sum_{z\in N_U(v)}x_z +C_{uv}.
\]
Recall that for each $u\in U$, the missing-weights of $u$ in $W$ is denoted by 
$$ Y(u) :=\sum_{w\in W\setminus N(u)}x_w. $$ 

\begin{lemma} \label{lem:Yu-Yv-lower-weighted}
    We have $Y(u)\ge \bigl(\lambda x_v-L_v(uv)\bigr)_+$ and $Y(v)\ge \bigl(\lambda x_u-L_u(uv)\bigr)_+$. 
\end{lemma}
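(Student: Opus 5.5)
The argument mirrors the proof of Lemma~\ref{lem:Yu-lower}, keeping the exact weights instead of the crude bounds coming from $B_{r+1}$-freeness. We prove only the first inequality; the second follows by exchanging the roles of $u$ and $v$. Since $Y(u)\ge 0$ trivially, it suffices to show $Y(u)\ge \lambda x_v-L_v(uv)$.

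First write the Perron equation at $v$. Because $v\in U$, it is adjacent to $u_*$, and $x_{u_*}=1$. Hence
\[
\lambda x_v = 1+\sum_{z\in N_U(v)}x_z+\sum_{w\in N_W(v)}x_w .
\]
Next split $N_W(v)$ into two disjoint parts: $N_W(v)\cap N_W(u)=Q_{uv}$ and $N_W(v)\setminus N_W(u)$. The first part has total weight exactly $C_{uv}$. Therefore
\[
\sum_{w\in N_W(v)\setminus N_W(u)}x_w=\lambda x_v-1-\sum_{z\in N_U(v)}x_z-C_{uv}=\lambda x_v-L_v(uv).
\]

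Finally, every vertex of $N_W(v)\setminus N_W(u)$ lies in $W$ and is not adjacent to $u$, so $N_W(v)\setminus N_W(u)\subseteq W\setminus N(u)$. All Perron coordinates are nonnegative, so
\[
Y(u)\ge \sum_{w\in N_W(v)\setminus N_W(u)}x_w=\lambda x_v-L_v(uv).
\]
Combining this with $Y(u)\ge 0$ gives $Y(u)\ge(\lambda x_v-L_v(uv))_+$.

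There is no real obstacle here. The only point needing care is that the splitting step is an exact identity: we never bound $d_U(v)$ or $|Q_{uv}|$, which is why no book-freeness hypothesis is needed. Nothing in the argument uses that $uv$ is bad, so the statement holds for every edge $uv\in E(U)$.
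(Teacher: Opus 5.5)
Your proof is correct and follows the paper's argument essentially verbatim: the Perron equation at $v$, splitting $N_W(v)$ into $Q_{uv}$ and $N_W(v)\setminus N_W(u)\subseteq W\setminus N(u)$, and then using nonnegativity of the coordinates together with $Y(u)\ge 0$. Your remark that the bad-edge hypothesis is never used is also accurate, since the paper states the lemma for an arbitrary edge $uv\in E(U)$.
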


\begin{proof} 
Applying the Perron equation at the vertex \(v\), we get
\[
        \lambda x_v
        =
        1+\sum_{z\in N_U(v)}x_z+\sum_{w\in N_W(v)}x_w .
\]
Therefore
\[
\begin{aligned}
        Y(u)     
        \ge
        \sum_{w\in N_W(v)\setminus N_W(u)}x_w     =
        \sum_{w\in N_W(v)}x_w-\sum_{w\in Q_{uv}}x_w  =
        \lambda x_v-1- \sum_{z\in N_U(v)}x_z -C_{uv}.
\end{aligned}
\]
Thus, we have 
\begin{equation*}
        Y(u)\ge \bigl(\lambda x_v-L_v(uv)\bigr)_+.
\end{equation*}
By symmetry, we get 
\begin{equation*}
        Y(v)\ge \bigl(\lambda x_u-L_u(uv)\bigr)_+. \qedhere 
\end{equation*} 
\end{proof}

To prove Theorem \ref{thm-C4+}, 
we shall use the following elementary inequality.

\begin{lemma}\label{lem:kite-algebra}
Let \(a,b\in[0,1]\), and let \(\alpha,\beta\in[0,1)\) satisfy
\(\alpha+\beta\le1\).  If \(a+b>1\), then
\[
        a+b-1
        \le
        \frac{a(b-\beta)_+}{1-\beta}
        +
        \frac{b(a-\alpha)_+}{1-\alpha},
\]
where we use the notation \((t)_+:=\max\{t,0\}\).
\end{lemma}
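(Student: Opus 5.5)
The plan is a direct case analysis modeled on the proof of Lemma~\ref{lem:algebra}, with $\alpha,\beta$ taking the role of the threshold $\tfrac12$. First I note that at least one positive part is active. If $a\le\alpha$ and $b\le\beta$, then $a+b\le\alpha+\beta\le 1$, contradicting $a+b>1$. This leaves three cases.

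\textbf{Case 1: $a>\alpha$ and $b\le\beta$.} The right-hand side is $\frac{b(a-\alpha)}{1-\alpha}$. Multiplying by $1-\alpha>0$, the claim becomes
\[
b(a-\alpha)-(1-\alpha)(a+b-1)\ge 0 .
\]
Expanding the left side gives $(1-a)(1-b)-\alpha(1-a)=(1-a)(1-\alpha-b)$. This is nonnegative because $a\le1$ and $b\le\beta\le1-\alpha$.

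\textbf{Case 2: $a\le\alpha$ and $b>\beta$.} This is symmetric to Case 1, with the roles of $(a,\alpha)$ and $(b,\beta)$ exchanged.

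\textbf{Case 3: $a>\alpha$ and $b>\beta$.} Both positive parts are active. Rewrite $\frac{a(b-\beta)}{1-\beta}=a-\frac{a(1-b)}{1-\beta}$, and similarly for the other term. The inequality then reduces to
\[
F(a,b):=\frac{a(1-b)}{1-\beta}+\frac{b(1-a)}{1-\alpha}\le 1 .
\]
$F$ is bilinear, so over the feasible region $R=\{\alpha\le a\le1,\ \beta\le b\le1,\ a+b\ge1\}$ (closed up, by continuity) its maximum lies on the boundary of $R$. I will check each boundary piece.
\begin{itemize}
\item On $a=1$: $F=\frac{1-b}{1-\beta}\le1$ because $b\ge\beta$. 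The edge $b=1$ is symmetric.
\item On $a=\alpha$ or $b=\beta$: $F$ is linear along these segments, so it suffices to check their endpoints. Those endpoints lie either on $a=1$, on $b=1$, or on the line $a+b=1$.
\item On the segment $a+b=1$: here $F=\frac{a^2}{1-\beta}+\frac{b^2}{1-\alpha}$, which is convex in $a$. So it is enough to check the endpoints $(a,b)=(\alpha,1-\alpha)$ and $(1-\beta,\beta)$. At $(\alpha,1-\alpha)$ one gets $F=\frac{\alpha^2}{1-\beta}+1-\alpha\le1$, because $\alpha\le1-\beta$. The other endpoint is symmetric.
\end{itemize}

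The main obstacle is Case 3, namely organising the boundary check cleanly, including the degenerate situation $\alpha=0$, where the segment $a=\alpha$ is outside the feasible set. Once one observes that $F$ is bilinear and convex along $a+b=1$, only these finitely many checks remain.
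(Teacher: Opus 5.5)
Your proof is correct. Your Cases 1 and 2 match the paper's, with the same factorization $(1-a)(1-\alpha-b)/(1-\alpha)$. The only difference there is phrasing: the paper says ``$b\le\beta$ forces $a>\alpha$,'' whereas you exclude the pattern $a\le\alpha$, $b\le\beta$ up front.

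Your Case 3 takes a genuinely different route. The paper substitutes $a=\alpha+(1-\alpha)s$, $b=\beta+(1-\beta)t$ with $s,t\in(0,1]$. It then verifies that the right-hand side minus $(a+b-1)$ equals $st+(1-\alpha-\beta)(1-s)(1-t)$, which is visibly nonnegative. You instead reduce to $F(a,b)\le 1$ and argue by extremality: bilinearity pushes the maximum to the boundary, and convexity along $a+b=1$ pushes it to the endpoints.

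The two arguments control the same quantity, since $1-F(a,b)$ is exactly the paper's expression. The substitution gives a one-line check that also exhibits the equality cases. Your approach shows where the bound is tight (the corners $(1,\beta)$ and $(\alpha,1)$), at the cost of some boundary bookkeeping.

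That bookkeeping can be shortened. The constraint $a+b\ge 1$ is not needed in Case 3: on the full rectangle $[\alpha,1]\times[\beta,1]$, a bilinear function is maximized at a corner, and the corner values are $F(\alpha,\beta)=\alpha+\beta\le 1$, $F(1,\beta)=F(\alpha,1)=1$ and $F(1,1)=0$.

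One small inaccuracy: when $\alpha=0$, the segment $a=\alpha$ is not outside the feasible set. It degenerates to the point $(0,1)$, which does lie in $R$ and has $F=1$. Your endpoint checks already cover it, so the argument is unaffected.
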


\begin{proof}
There are three cases.  If \(b\le \beta\), then
\(a>1-b\ge 1-\beta\ge\alpha\), so only the second positive part is needed.
We have
\[
\frac{b(a-\alpha)}{1-\alpha}-(a+b-1)
=
\frac{(1-a)(1-\alpha-b)}{1-\alpha}\ge0,
\]
because \(b\le\beta\le 1-\alpha\).  The case \(a\le\alpha\) is symmetric.

It remains to consider the case \(a>\alpha\) and \(b>\beta\).  In this case, we can write
\[
        a=\alpha+(1-\alpha)s,
        \qquad
        b=\beta+(1-\beta)t,
\]
where \(s,t\in(0,1]\).  
A direct calculation gives
\[
\begin{aligned}
\frac{a(b-\beta)}{1-\beta}
+
\frac{b(a-\alpha)}{1-\alpha}
-(a+b-1)                              =
        st+ (1-\alpha-\beta)(1-s)(1-t)\ge0.
\end{aligned}
\]
This proves the lemma.
\end{proof}

Now, we are going to prove Theorem \ref{thm-C4+}. 

\begin{proof}[{\bf Proof of Theorem \ref{thm-C4+}}] 
Let $ \lambda=\lambda(G)$. 
Since $  \lambda^2\le 2m$, 
the assumptions \(\lambda^2>m\) and \(m\to\infty\) imply
\(\lambda\to\infty\), and conversely \(\lambda\to\infty\) implies
\(m\to\infty\). We need to show 
$$ 
\texttt{\#}\,C_4^+ (G)\ge \left( 
\frac{1}{8}-o(1) \right)m.$$ 
Suppose to the contrary that the lower bound fails.  Then, after decreasing
\(\varepsilon\) if necessary, there is a fixed
\(0<\varepsilon<1/8\) and a sequence of \(m\)-edge graphs with \(m\to\infty\)
such that
\begin{equation*}
        \lambda^2>m
        \qquad\text{and}\qquad
        \texttt{\#}\,C_4^+ (G) \le \left(\frac18-\varepsilon\right)m .
\end{equation*} 
Since \(m<\lambda^2\), we also have
\begin{equation}\label{eq:kite-K-lambda-upper}
        \texttt{\#}\,C_4^+ (G) \le \left(\frac18-\varepsilon\right)\lambda^2.
\end{equation}

We call an edge \(uv\in E(U)\) \emph{bad} if
$   x_u+x_v>1$. 
Only bad edges can contribute positively to the sum in Theorem \ref{thm:identity}. 
We divide bad edges within $U$ into two classes.  A bad edge \(uv\in E(U)\) is called
\emph{ordinary} if
$   L_u(uv)+L_v(uv)\le \lambda$; 
and \emph{exceptional} otherwise.

We next prove a local coefficient estimate for ordinary bad edges.

\begin{claim}\label{lem:kite-local-coeff}
For every fixed \(\varepsilon>0\), there exists
\(\delta=\delta(\varepsilon)>0\) such that, under the assumption
\eqref{eq:kite-K-lambda-upper}, for every \(u\in U\), we have 
\[
        \sum_{\substack{v\in N_U(u)\\ uv\ {\rm ordinary\ bad}}}
        \frac{1}{\lambda-L_v(uv)}
        \le
        \frac12-\delta+o(1).
\]
\end{claim}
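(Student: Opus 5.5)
The plan is to turn the contradiction hypothesis \eqref{eq:kite-K-lambda-upper} into degree and codegree caps measured in units of $\lambda$. Then, for each ordinary bad edge, I would combine the ordinary inequality $L_u(uv)+L_v(uv)\le\lambda$ with a direct lower bound on $\lambda-L_v(uv)$, which makes each summand of order $1/\lambda$ with an explicit constant. The sum over $v$ has at most $d_U(u)$ terms, so it is then bounded by a simple extremal problem whose value is strictly below $\tfrac12$.

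\emph{Step 1 (caps from the kite count).} The edge $u_*u$ has $\tau(u_*u)\ge d_U(u)$, so \eqref{eq:kite-book-count} and \eqref{eq:kite-K-lambda-upper} give $\binom{d_U(u)}{2}\le(\tfrac18-\varepsilon)\lambda^2$. Hence $d_U(u)\le(\tfrac12-\gamma)\lambda+O(1)$ with $\gamma=\gamma(\varepsilon)>0$, for every $u\in U$. Similarly $\tau(uv)\ge 1+|Q_{uv}|$ gives $C_{uv}\le|Q_{uv}|\le(\tfrac12-\gamma)\lambda+O(1)$.

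I would also record the aggregate constraint over all $v\in N_U(u)$:
\[
\sum_{v\in N_U(u)}\binom{\tau(uv)}{2}+\binom{d_U(u)}{2}\le\Bigl(\tfrac18-\varepsilon\Bigr)\lambda^2 .
\]
Since every center edge is counted once, this shows that the codegrees along the star at $u$ cannot all be large simultaneously.

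\emph{Step 2 (per-term bound).} For an ordinary bad edge $uv$, the ordinary condition gives $\lambda-L_v(uv)\ge L_u(uv)$. The caps of Step~1 give
\[
\lambda-L_v(uv)\ge\lambda-1-d_U(v)-C_{uv}\ge\bigl(\tfrac12+\gamma\bigr)\lambda-C_{uv}-O(1).
\]
Averaging the two bounds yields $\lambda-L_v(uv)\ge\tfrac12(\lambda-d_U(v))\ge(\tfrac14+\tfrac\gamma2)\lambda$. This alone is too weak, since summing over up to $(\tfrac12-\gamma)\lambda$ terms loses a factor of about $2$.

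So I would keep both bounds, in the form
\[
\frac{1}{\lambda-L_v(uv)}\le\min\Bigl\{\frac{1}{L_u(uv)},\ \frac{1}{(\tfrac12+\gamma)\lambda-C_{uv}-O(1)}\Bigr\}.
\]
The key is then to use that $L_u(uv)\ge 1+\sum_{z\in N_U(u)}x_z+C_{uv}$ is common to all terms apart from $C_{uv}$. The bad neighbours $v$ of $u$ satisfy $x_v>1-x_u$, so $\sum_{z\in N_U(u)}x_z$ grows with their number $k$.

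\emph{Step 3 (optimization).} Let $k$ be the number of ordinary bad neighbours of $u$ and write $c_v:=C_{uv}/\lambda$. I would bound the sum by
\[
\sum_{v}\min\Bigl\{\frac{1}{\lambda(s+c_v)},\ \frac{1}{\lambda(\tfrac12+\gamma-c_v)}\Bigr\},
\]
where $s\lambda\approx\sum_{z\in N_U(u)}x_z$, subject to $k\le(\tfrac12-\gamma)\lambda$ and the aggregate codegree constraint $\sum_v c_v^2\lambda^2/2\lesssim(\tfrac18-\varepsilon)\lambda^2$. The aggregate constraint prevents many $c_v$ from being near the balancing point $c_v\approx\tfrac14$, which is where the minimum is largest.

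I expect this step to be the main obstacle. In particular, the regime $x_u\approx 1$ makes $s$ possibly small compared with $k/\lambda$. There I would instead use that $x_u\approx1$ forces $\lambda x_u\approx\lambda$, hence $\lambda-L_u(uv)$ and $Y(v)$ are large. I would then check that the maximizer of this finite-dimensional problem equals $\tfrac12$ only at $\gamma=\varepsilon=0$, which by compactness gives $\tfrac12-\delta(\varepsilon)+o(1)$ with the $O(1)$ errors absorbed into $o(1)$. If this fails in some regime, I would fall back on counting kites with center $uv$ and with the $u$-neighbours as wings, to sharpen the upper bound on $d_U(u)$ as a weighted quantity.
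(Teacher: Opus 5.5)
Your Step~3 relaxation cannot give a bound below $\tfrac12$: its value is close to $1$. The loss happens in Step~2. Your second bound, $\lambda-L_v(uv)\ge(\tfrac12+\gamma)\lambda-C_{uv}-O(1)$, replaces $d_U(v)$ by its individual worst-case cap $(\tfrac12-\gamma)\lambda$ for every $v$ at once, and $d_U(v)$ then disappears from the optimization. Take $c_v=0$ for all $v$ and $k=(\tfrac12-\gamma)\lambda$. Your aggregate codegree constraint is then vacuous. With $C_{uv}=0$ you have $L_u(uv)\le 1+d_U(u)<(\tfrac12+\gamma)\lambda$ for large $\lambda$, so the minimum is always the second expression. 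Your bound therefore evaluates to $k/((\tfrac12+\gamma)\lambda)=(\tfrac12-\gamma)/(\tfrac12+\gamma)\approx 1-4\gamma$. This is the same factor-$2$ loss you already noticed for the averaged bound. So the assertion that the maximizer equals $\tfrac12$ only at $\gamma=\varepsilon=0$ is false. Neither the regime $x_u\approx 1$ nor sharpening the bound on $d_U(u)$ addresses the real issue, which is $d_U(v)$.

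The missing idea is that the edges $u_*v$, $v\in N_U(u)$, are pairwise distinct center edges, and distinct from the edges $uv$. So the kite count controls $d_U(v)$ in aggregate over $v$, exactly as your aggregate constraint controls $\tau(uv)$. Fix a small $\xi>0$.

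If $L_v(uv)>\xi\lambda$, then $d_U(v)+|Q_{uv}|\ge\xi\lambda-1$. The two edges $u_*v$ and $uv$ then carry $\binom{d_U(v)}{2}+\binom{|Q_{uv}|+1}{2}\ge c_\xi\lambda^2$ kites, and summing over such $v$ shows there are only $O_\xi(1)$ of them. For each of them, the same two-edge count against $(\tfrac18-\varepsilon)\lambda^2$, together with Cauchy--Schwarz, gives $L_v(uv)\le 1+d_U(v)+|Q_{uv}|\le(1/\sqrt{2}+o(1))\lambda$. Each such term is therefore $O(1/\lambda)$, and together they contribute $o(1)$.

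Every remaining $v$ satisfies $\lambda-L_v(uv)\ge(1-\xi)\lambda$, and there are at most $d_U(u)\le(\tfrac12-\gamma+o(1))\lambda$ of them. They contribute at most $(\tfrac12-\gamma)/(1-\xi)+o(1)\le\tfrac12-\tfrac{\gamma}{2}+o(1)$ once $\xi\le\gamma$.

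This is the paper's argument. It never uses the ordinary condition $L_u(uv)+L_v(uv)\le\lambda$, so your first bound $1/L_u(uv)$ is not needed.
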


\begin{proof}[Proof of claim]
First, note that the edge \(u_*u\) has \(d_U(u)\) common neighbors.  Hence, by
\eqref{eq:kite-book-count},
\[
     {d_U(u)\choose2} \le \texttt{\#}\,C_4^+ (G).
\]
Together with \eqref{eq:kite-K-lambda-upper}, this implies that
$   d_U(u)\le \left(\frac12-\gamma+o(1)\right)\lambda$ 
for some \(\gamma=\gamma(\varepsilon)>0\).

Fix a small constant \(\xi>0\), to be chosen later. We split the ordinary bad
neighbors \(v\) of \(u\) into two sets:
\[
        I_0=\{v:L_v(uv)\le \xi\lambda\},
        \qquad
        I_1=\{v:L_v(uv)>\xi\lambda\}.
\]
For every \(v\in I_0\), we have 
$  \frac{1}{\lambda-L_v(uv)}
        \le
        \frac{1}{(1-\xi)\lambda}$. 
Therefore, we get 
\[
        \sum_{v\in I_0}\frac{1}{\lambda-L_v(uv)}
        \le
        \frac{d_U(u)}{(1-\xi)\lambda}
        \le
        \frac{\frac12-\gamma+o(1)}{1-\xi} \le   \frac12-\frac{\gamma}{2}+o(1),
\]
where the last inequality holds by 
choosing \(\xi>0\) sufficiently small in terms of \(\varepsilon\). 

Now consider \(v\in I_1\).  Since
$   L_v(uv)=1+ \sum_{z\in N_U(v)}x_z +C_{uv}\le 1+d_U(v)+|Q_{uv}|$, 
we have
\[
        d_U(v)+|Q_{uv}| \ge \xi\lambda-O(1).
\]
For notational convenience, we denote 
$ d:=d_U(v)$ and  $q:=|Q_{uv}|$. 
The two center edges \(u_*v\) and \(uv\) give two distinct terms in the
kite-counting sum \eqref{eq:kite-book-count}; namely, they contribute at least
$  {d\choose2}+{q+1\choose2}$ 
copies of $C_4^+$.  Since \(d+q\ge\xi\lambda-O(1)\), we have
\[
        {d\choose2}+{q+1\choose2}
        \ge
        \frac14(d+q)^2-O(d+q)
        \ge
        c_\xi\lambda^2
\]
for all sufficiently large \(\lambda\), where \(c_\xi>0\) depends only on
\(\xi\).  For every $v\in I_1$, all the center edges \(u_*v\) and \(uv\) are
distinct.  Hence summing over \(v\in I_1\) gives
$  |I_1|c_\xi\lambda^2\le  \texttt{\#}{C_4^+}(G)$.
Thus \(|I_1|=O_{\varepsilon,\xi}(1)\).

Moreover, for each \(v\in I_1\), the inequality
\[
        {d\choose2}+{q+1\choose2}
        \le  \texttt{\#}{C_4^+}(G)
        \le
        \left(\frac18-\varepsilon\right)\lambda^2
\]
first gives \(d,q=O(\lambda)\), and then gives
\[
        d^2+q^2
        \le
        \left(\frac14-2\varepsilon+o(1)\right)\lambda^2.
\]
Therefore, by Cauchy--Schwarz's inequality, we obtain 
\[
        d+q
        \le
        \sqrt{2(d^2+q^2)}
        \le
        \kappa\lambda+o(\lambda)
\]
for some constant \(\kappa=\kappa(\varepsilon)<0.8\).  Hence
\[
        L_v(uv)\le 1+d+q\le \kappa\lambda+o(\lambda).
\]
Consequently every term $\frac{1}{\lambda - L_v(uv)}$ with \(v\in I_1\) is \(O_\varepsilon(1/\lambda)\).
Since \(|I_1|=O_{\varepsilon,\xi}(1)\), we get
\[
        \sum_{v\in I_1}\frac{1}{\lambda-L_v(uv)}=o(1).
\]
Combining the estimates for \(I_0\) and \(I_1\), the claim follows by setting \(\delta=\gamma/4\).
\end{proof}

We now show that all ordinary bad edges are absorbed by the deficit term \(\Phi\) and  $G$ must contain an exceptional edge. 
Let \(uv\) be an ordinary bad edge.  In Lemma~\ref{lem:kite-algebra}, setting 
\[
        a=x_u,\qquad b=x_v,\qquad
        \alpha=\frac{L_u(uv)}{\lambda},\qquad
        \beta=\frac{L_v(uv)}{\lambda}.
\]
Since \(uv\) is ordinary, we have \(\alpha+\beta\le1\). So Lemma~\ref{lem:kite-algebra} is applicable. 
Then 
\[ x_u+x_v -1 \le \frac{x_u (\lambda x_v - L_v(uv))_+}{\lambda - L_v(uv)} + \frac{x_v(\lambda x_u -L_u(uv))_+}{\lambda - L_u(uv)}.   \]
Combining with 
Lemma \ref{lem:Yu-Yv-lower-weighted}, 
we get 
\[
        x_u+x_v-1
        \le
        \frac{x_uY(u)}{\lambda-L_v(uv)}
        +
        \frac{x_vY(v)}{\lambda-L_u(uv)}.
\]
Summing this inequality over all ordinary bad edges and applying
Claim~\ref{lem:kite-local-coeff}, we obtain
\begin{equation}\label{eq:kite-ordinary-absorbed}
        \sum_{\substack{uv\in E(U)\\ uv\ {\rm ordinary\ bad}}}
        (x_u+x_v-1)
        \le
        \left(\frac12-\delta+o(1)\right)
        \sum_{u\in U}x_uY(u)                      \le
        (1-2\delta+o(1))\Phi,
\end{equation}
where the last inequality follows from Theorem \ref{thm:halfmissing}.

\begin{claim}  \label{cl:big-O}
Let \(\mathcal E\) be the set of exceptional bad edges. 
    Then $\mathcal{E}\neq \emptyset$, $|\mathcal E|=O(\lambda)$ and $\Phi = O(\lambda)$. 
\end{claim}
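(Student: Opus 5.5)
Write $S_{\mathcal E}:=\sum_{uv\in\mathcal E}(x_u+x_v-1)$. Combining Theorem~\ref{thm:identity} with \eqref{eq:kite-ordinary-absorbed} and discarding the non-bad edges, whose contributions are nonpositive, gives
\[
0<\lambda^2-m\le S_{\mathcal E}+(1-2\delta+o(1))\Phi-\Phi=S_{\mathcal E}-(2\delta-o(1))\Phi .
\]
The whole claim follows from this one display.

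\textbf{Nonemptiness.} If $\mathcal E=\emptyset$, then $S_{\mathcal E}=0$. The display then reads $\lambda^2-m\le -(2\delta-o(1))\Phi\le 0$ for $\lambda$ large. This contradicts $\lambda^2>m$, so $\mathcal E\neq\emptyset$.

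\textbf{Bound on $|\mathcal E|$.} Fix $uv\in\mathcal E$. Then $L_u(uv)+L_v(uv)>\lambda$. Since coordinates are at most $1$, this gives $d_U(u)+d_U(v)+2|Q_{uv}|\ge \lambda-2$. Write $d_u=d_U(u)$, $d_v=d_U(v)$ and $q=|Q_{uv}|$.

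The center edge $uv$ has at least $q+1$ common neighbours, namely $u_*$ together with $Q_{uv}$. It therefore contributes $\binom{q+1}{2}$ kites to \eqref{eq:kite-book-count}.

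\emph{Case $q\ge \lambda/8$.} This edge alone carries $\Omega(\lambda^2)$ kites through its own center term. Distinct exceptional edges of this type are distinct center edges. By \eqref{eq:kite-K-lambda-upper}, only $O(1)$ such edges exist.

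\emph{Case $q<\lambda/8$.} Then $d_u+d_v\ge 3\lambda/4-O(1)$, so $\max(d_u,d_v)\ge \lambda/3$, say $d_u\ge\lambda/3$. Then $\binom{d_u}{2}=\Omega(\lambda^2)$ is the contribution of the single center edge $u_*u$. Hence the set $H$ of vertices $u\in U$ with $d_U(u)\ge\lambda/3$ satisfies $|H|\cdot\Omega(\lambda^2)\le \texttt{\#}\,C_4^+(G)$, so $|H|=O(1)$. Every exceptional edge of this case has an endpoint in $H$. Each $u\in H$ has $d_U(u)\le(\tfrac12-\gamma+o(1))\lambda$, as in Claim~\ref{lem:kite-local-coeff}. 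Therefore there are at most $|H|\cdot O(\lambda)=O(\lambda)$ such edges.

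Together, $|\mathcal E|=O(\lambda)$.

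\textbf{Bound on $\Phi$.} Each term satisfies $x_u+x_v-1\le 1$, so $S_{\mathcal E}\le|\mathcal E|=O(\lambda)$. The display gives $(2\delta-o(1))\Phi< S_{\mathcal E}$. Since $\delta=\delta(\varepsilon)>0$ is fixed, $\Phi=O_\varepsilon(\lambda)$.

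\textbf{Main obstacle.} The delicate step is the counting of $\mathcal E$. One must make sure that the kites charged to different exceptional edges are not double counted. This is why the argument charges either the edge's own center term $uv$ or the center edge $u_*u$ at a high-degree vertex $u\in H$, and then uses the $O(1)$ bound on $|H|$ together with the degree bound $d_U(u)=O(\lambda)$. The bound $d_U(u)=O(\lambda)$ is in fact immediate, since $d_U(u)\le |U|$ and $\binom{d_U(u)}{2}\le\texttt{\#}\,C_4^+(G)=O(\lambda^2)$.
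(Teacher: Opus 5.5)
Your proof is correct. It has the same skeleton as the paper's: the single display from Theorem~\ref{thm:identity} and \eqref{eq:kite-ordinary-absorbed}, the split of $\mathcal E$ by the size of $|Q_{uv}|$, and $S_{\mathcal E}\le|\mathcal E|$ to get $\Phi=O(\lambda)$. Where you differ is in how you count the exceptional edges with small $|Q_{uv}|$.

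The paper uses a second-moment bound. Each such edge has $d_U(u)+d_U(v)\ge\frac12\lambda-O(1)$, so their number is at most
\[
\frac{2+o(1)}{\lambda}\sum_{uv\in E(U)}\bigl(d_U(u)+d_U(v)\bigr)=\frac{2+o(1)}{\lambda}\sum_{u\in U}d_U(u)^2 .
\]
The bound $\sum_{u}d_U(u)^2=O(\lambda^2)$ then comes from $\sum_u\binom{d_U(u)}{2}\le\texttt{\#}\,C_4^+(G)$, since the spines $u_*u$ are distinct.

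You instead note three things:
\begin{itemize}
\item every such edge has an endpoint in the set $H$ of vertices with $d_U\ge\lambda/3$;
\item $|H|=O(1)$, because each $y\in H$ alone supplies $\Omega(\lambda^2)$ kites on the spine $u_*y$;
\item each $y\in H$ has only $O(\lambda)$ neighbours in $U$.
\end{itemize}

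The paper's route works because $\binom{d}{2}$ is quadratic and so matches the second moment, which makes it specific to kites. Yours only needs one spine of codegree $\Theta(\lambda)$ to be costly. That is why it is exactly the argument the paper switches to in Claim~\ref{cl:Bt-deficit} for $B_t$ with $t\ge 3$, so your version of this step carries over to general $t$ unchanged.

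One small point: the remark that $d_U(u)\le|U|$ does not by itself give $O(\lambda)$. It is the kite count $\binom{d_U(u)}{2}=O(\lambda^2)$ that does.
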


\begin{proof}[Proof of claim]
 If \(\mathcal E=\emptyset\), i.e., all bad edges of $U$ are ordinary, 
then using 
Theorem \ref{thm:identity} and (\ref{eq:kite-ordinary-absorbed}) gives
$  \lambda^2-m
        \le
        -(2\delta-o(1))\Phi < 0$, 
a contradiction.  
Thus, we must have \(\mathcal E\neq\emptyset\). 

Let \(uv\in\mathcal E\) be an exceptional edge.  Then
$  L_u(uv)+L_v(uv)>\lambda$, 
and therefore
\[
        2+\sum_{z\in N_U(u)}x_z + 
        \sum_{z\in N_U(v)}x_z 
        +2C_{uv}>\lambda.
\]
Since \( \sum_{z\in N_U(u)}x_z \le d_U(u)\), \( \sum_{z\in N_U(v)}x_z \le d_U(v)\), and \(C_{uv}\le |Q_{uv}|\), the above gives
\begin{equation}\label{eq:kite-exceptional-size}
        d_U(u)+d_U(v)+2|Q_{uv}| \ge \lambda-O(1).
\end{equation}

First, we consider exceptional edges with \( |Q_{uv}|\ge\lambda/4\).  For each such
edge, the center edge \(uv\) contributes at least
$  {|Q_{uv}|+1\choose2}
        \ge
        \left(\frac1{32}-o(1)\right)\lambda^2$ 
copies of \(C_4^+\).  Since the center edges \(uv\) are distinct, and there are at most $(\frac{1}{8} - \varepsilon)\lambda^2$ copies of $C_4^+$, 
there are
only \(O(1)\) such exceptional edges.

Now, we consider exceptional edges with \(|Q_{uv}| <\lambda/4\).  By
\eqref{eq:kite-exceptional-size}, we have 
\[
        d_U(u)+d_U(v)\ge \frac12\lambda-O(1).
\]
Hence, the number of such edges is at most
\[
        \frac{2+o(1)}{\lambda}
        \sum_{uv\in E(U)}(d_U(u)+d_U(v))
        =
        \frac{2+o(1)}{\lambda}
        \sum_{u\in U}d_U(u)^2.
\]
But the center edge \(u_*u\) contributes \({d_U(u)\choose2}\) copies of
\(C_4^+\), so
\[
        \sum_{u\in U}{d_U(u)\choose2}\le \texttt{\#} C_4^+(G)\le \left(\frac{1}{8} - \varepsilon \right)\lambda^2.
\]
Consequently, we get 
\[
        \sum_{u\in U}d_U(u)^2
        \le
        2\sum_{u\in U}{d_U(u)\choose2} +\sum_{u\in U}d_U(u)
        =  
        O(\lambda^2).
\]
Thus, we obtain that the number of exceptional edges with \(|Q_{uv}| <\lambda/4\) is
\(O(\lambda)\).  Therefore, we conclude that the total number of exceptional edges $ |\mathcal E|=O(\lambda)$. 

We now return to the spectral surplus identity.  Edges in \(E(U)\) that are not
bad have nonpositive contribution.  Hence, using 
\eqref{eq:kite-ordinary-absorbed} and $|\mathcal E| =O(\lambda)$, we have 
\[
\begin{aligned}
        \lambda^2-m
        &=
        \sum_{uv\in E(U)}(x_u+x_v-1)-\Phi                         \\
        &\le
        \sum_{uv\in\mathcal E}(x_u+x_v-1)
        +(1-2\delta+o(1))\Phi-\Phi                                \\
        &\le
        O(\lambda)-(2\delta-o(1))\Phi.
\end{aligned}
\]
Since \(\lambda^2-m>0\), it follows that
$\Phi=O(\lambda)$, as needed. 
\end{proof}

By Theorem \ref{thm:halfmissing} and Claim \ref{cl:big-O}, we get
\begin{equation}\label{eq:kite-missing-Olambda}
        \sum_{\substack{u\in U,\ w\in W\\ uw\notin E(G)}}x_ux_w
        =
        \sum_{u\in U}x_uY(u)
        \le 2\Phi =
        O(\lambda).
\end{equation}

 Fix an exceptional bad
edge \(uv\in\mathcal E\), we write 
$ Q=Q_{uv}=N_W(u)\cap N_W(v)$ 
and 
\[
        A=\sum_{z\in N_U(u)}x_z,
        \qquad
        B=\sum_{z\in N_U(v)}x_z,
        \qquad
        C=\sum_{w\in Q}x_w.
\]
Since \(uv\) is exceptional, we have 
\begin{equation}\label{eq:kite-exceptional-ABC}
        A+B+2C\ge \lambda-O(1).
\end{equation}

We now count the copies of \(C_4^+\) forced by this exceptional edge $uv\in E(U)$.  

\begin{claim} \label{cl:lower-copies}
    We have $\texttt{\#}\, C_4^+(G) \ge
        \frac12(A^2+B^2+C^2)+AC+BC-O(\lambda) $. 
\end{claim}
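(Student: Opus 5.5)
The plan is to lower-bound the kite count through the codegree formula \eqref{eq:kite-book-count}. I will pick five families of center edges that are pairwise distinct and charge each family one term of the target expression. Throughout, \eqref{eq:kite-K-lambda-upper} gives $\texttt{\#}\,C_4^+(G)=O(\lambda^2)$, and \eqref{eq:kite-missing-Olambda} says the total missing weight between $U$ and $W$ is $O(\lambda)$. A useful first consequence is that every relevant degree is $O(\lambda)$. Indeed, $\binom{d_U(u)}{2}\le \texttt{\#}\,C_4^+(G)$ and $\binom{d_U(v)}{2}\le \texttt{\#}\,C_4^+(G)$, as in Claim~\ref{lem:kite-local-coeff}, and $\binom{|Q|+1}{2}\le \texttt{\#}\,C_4^+(G)$, because $u_*$ together with $Q$ are common neighbours of $u$ and $v$. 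Hence $d_U(u),d_U(v),|Q|=O(\lambda)$.

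\emph{The three squared terms.} The center edge $u_*u$ has at least $d_U(u)$ common neighbours. Since $A\le d_U(u)$ (all coordinates are at most $1$), it contributes at least $\binom{d_U(u)}{2}\ge \frac12 A^2-\frac12 d_U(u)=\frac12A^2-O(\lambda)$. In the same way, $u_*v$ contributes at least $\frac12B^2-O(\lambda)$. The edge $uv$ has all of $Q$ as common neighbours and $C\le |Q|$, so it contributes at least $\binom{|Q|}{2}\ge\frac12C^2-O(\lambda)$.

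\emph{The cross terms $AC$ and $BC$.} Here I would use center edges $uw$ with $w\in Q$. For such $w$, the common neighbours of $u$ and $w$ include $v$ (since $w\in N(v)$) and every $z\in N_U(u)\cap N(w)$. Writing $k_w:=|N_U(u)\cap N(w)|$, this gives $\tau(uw)\ge 1+k_w$, and hence $\binom{\tau(uw)}{2}\ge k_w$. Summing over $w\in Q$ and using $x\le 1$ yields
\[
\sum_{w\in Q}k_w\;\ge\;\sum_{z\in N_U(u)}\sum_{w\in Q,\,zw\in E}x_zx_w\;=\;AC-\sum_{z\in N_U(u)}\sum_{w\in Q,\,zw\notin E}x_zx_w\;\ge\; AC-O(\lambda).
\]
The subtracted sum is part of the total missing weight between $U$ and $W$, so it is $O(\lambda)$ by \eqref{eq:kite-missing-Olambda}. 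Symmetrically, the center edges $vw$ with $w\in Q$ contribute at least $BC-O(\lambda)$.

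\emph{Distinctness and conclusion.} The edges $u_*u$, $u_*v$, $uv$ lie inside $\{u_*\}\cup U$. The edges $uw$ and $vw$ with $w\in Q\subseteq W$ each have exactly one endpoint in $W$, and $uw\neq vw$. So all of these center edges are distinct, and their contributions to \eqref{eq:kite-book-count} simply add up. This gives the claimed bound $\frac12(A^2+B^2+C^2)+AC+BC-O(\lambda)$. The main obstacle is the cross-term step. One has to find center edges whose codegree grows linearly, rather than quadratically, in the incidences between $N_U(u)$ and $Q$, and then convert the unweighted incidence count into $AC$ up to an additive $O(\lambda)$. This is exactly where the half-missing-weights bound of Theorem~\ref{thm:halfmissing}, through \eqref{eq:kite-missing-Olambda}, is essential. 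The remaining steps are routine bookkeeping of $O(\lambda)$ errors.
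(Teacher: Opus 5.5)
Your argument is essentially the paper's proof. It uses the same five families of center edges ($u_*u$, $u_*v$, $uv$, then $uw$ and $vw$ for $w\in Q$). It also converts the rectangle incidences into $AC$ and $BC$ the same way, paying the $O(\lambda)$ missing weight from Theorem~\ref{thm:halfmissing}.

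There is one small slip in the cross-term step. Since $w\in Q\subseteq N(v)$ and $v\in N_U(u)$, the vertex $v$ already lies in $N_U(u)\cap N(w)$. So $\tau(uw)\ge 1+k_w$ counts $v$ twice and is false in general. For example, if $v$ is the only common neighbour of $u$ and $w$, then $\binom{\tau(uw)}{2}=0<1=k_w$.

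The correct bound is $\tau(uw)\ge k_w$, which gives $\binom{\tau(uw)}{2}\ge\binom{k_w}{2}\ge k_w-1$. Summing over $w\in Q$ costs an extra $|Q|$, and you have already shown $|Q|=O(\lambda)$, so the claim still holds.

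This is exactly why the paper lets $z$ range over $N_U(u)\setminus\{v\}$. Each such $z$ with $zw\in E(G)$ gives the kite with center edge $uw$ and the two triangle vertices $v,z$. The paper then absorbs the resulting loss $x_vC\le C\le\lambda$.
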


\begin{proof}[Proof of claim]
First,
the three center edges \(u_*u\), \(u_*v\), and \(uv\) give
\[
        {d_U(u)\choose2}
        +
        {d_U(v)\choose2}
        +
        {|Q|+1\choose2}
\]
copies of $C_4^+$.  Under the contradiction assumption \eqref{eq:kite-K-lambda-upper},
the quantities \(d_U(u)\), \(d_U(v)\), and \(|Q| \) are all \(O(\lambda)\).
Since \(d_U(u)\ge A\), \(d_U(v)\ge B\), and \(|Q|\ge C\), these three center
edges contribute at least
\begin{equation}\label{eq:kite-three-centers}
        \frac12(A^2+B^2+C^2)-O(\lambda)
\end{equation}
copies of \(C_4^+\).

The crucial additional contribution comes from two rectangles.  Consider first
the pairs
\[
        (z,w)\in \bigl(N_U(u)\setminus\{v\}\bigr)\times Q.
\]
If \(zw\in E(G)\), then the center edge \(uw\) has two common neighbors
\(v\) and \(z\).  Hence the choice of such a pair \((z,w)\) gives a copy of
\(C_4^+\) with center edge \(uw\).  Therefore the number of such copies is at
least
\[
        \left|\{(z,w): z\in N_U(u)\setminus\{v\},\ w\in Q,\ zw\in E(G)\}\right|.
\]
Since every Perron coordinate is at most \(1\), this number is at least the
corresponding weighted count:
\[
        \sum_{\substack{z\in N_U(u)\setminus\{v\}\\ w\in Q\\ zw\in E(G)}}x_zx_w.
\]
The total weighted mass of all pairs in
\(\bigl(N_U(u)\setminus\{v\}\bigr)\times Q\) is \((A-x_v)C\).  By
\eqref{eq:kite-missing-Olambda}, the total weighted mass of missing pairs
between \(U\) and \(W\) is \(O(\lambda)\).  Moreover,
\[
        C\le \sum_{w\in N_W(u)}x_w\le \lambda x_u\le \lambda
\]
by the Perron equation at \(u\).  Hence
\begin{equation}\label{eq:kite-rect-u} 
        \sum_{\substack{z\in N_U(u)\setminus\{v\}\\ w\in Q\\ zw\in E(G)}}x_zx_w
        \ge
        (A-x_v)C-O(\lambda) \ge
        AC-O(\lambda). 
\end{equation}
Similarly, the rectangle
$   \bigl(N_U(v)\setminus\{u\}\bigr)\times Q$ 
gives at least
\begin{equation}\label{eq:kite-rect-v}
        BC-O(\lambda)
\end{equation}
additional copies of \(C_4^+\) with center edges \(vw\), where \(w\in Q\).

The center edges used in \eqref{eq:kite-three-centers},
\eqref{eq:kite-rect-u} and \eqref{eq:kite-rect-v} are pairwise distinct:
they are respectively \(u_*u,u_*v,uv\), then edges \(uw\) with \(w\in Q\), and
edges \(vw\) with \(w\in Q\).  Thus, the corresponding contributions are 
added in the kite-counting formula \eqref{eq:kite-book-count}.  We obtain
\begin{equation*}  
        \texttt{\#}\, C_4^+(G) 
        \ge
        \frac12(A^2+B^2+C^2)+AC+BC-O(\lambda). 
        \qedhere 
\end{equation*}
\end{proof}

It remains to minimize $\texttt{\#}\, C_4^+(G)$ in Claim \ref{cl:lower-copies} under the constraint 
\eqref{eq:kite-exceptional-ABC}. We denote 
$   T:=A+B+2C $.   
Since $A,B\ge 0$, we have \(A^2+B^2\ge \frac{1}{2}(A+B)^2\), and then 
\[
\begin{aligned}
        \frac12(A^2+B^2+C^2)+AC+BC
        &\ge
        \frac{(A+B)^2}{4}+\frac{C^2}{2}+(A+B)C.
\end{aligned}
\]
Using \(A+B=T-2C\), the right-hand side becomes
\[
        \frac{(T-2C)^2}{4}+\frac{C^2}{2}+(T-2C)C
        =
        \frac{T^2}{4}-\frac{C^2}{2} \ge \frac{T^2}{8},
\]
where the last inequality holds since \(A+B\ge0\) and \(0\le C\le T/2\).  

By \eqref{eq:kite-exceptional-ABC}, we know that \(T\ge\lambda-O(1)\).  Hence
\[
        \frac12(A^2+B^2+C^2)+AC+BC
        \ge
        \frac18\lambda^2-O(\lambda).
\]
Together with Claim \ref{cl:lower-copies}, this yields
\[
      \texttt{\#} \,C_4^+(G)  \ge \frac18\lambda^2-O(\lambda)
        =
        \left(\frac18-o(1)\right)\lambda^2, 
\]
which contradicts \eqref{eq:kite-K-lambda-upper}. This completes the proof of the lower
bound. 
\end{proof}

\subsection{Counting books of arbitrary size} 

\label{sec:large-books}

Recently, 
Li, Lin, Liu and Zhang \cite{LLLZ2026} established the edge-spectral 
supersaturation results by developing the spectral Sidorenko inequalities. For example, recall that $S_{t-1,m}$ is the split graph with $m$ edges obtained by joining a clique $K_{t-1}$ with an independent set. It was shown in \cite{LLLZ2026} that for every fixed $t\ge 2$ and every $m$-edge graph $G$ with $\lambda(G)>\lambda(S_{t-1,m})$, we have 
$$\texttt{\#} K_{t,t}(G)
        \ge
        \left(\frac{2^{-(t-1)^2}}{(t!)^2}-o(1)\right)m^t \quad \text{and}\quad
 \texttt{\#}C_{2t}(G)
        \ge
        \left(\frac{(t-1)!}{2t^t}-o(1)\right)m^t.$$
The extremal constructions of the first are random graphs, while the second are split graphs. 

It is natural to ask a general problem for counting the book $B_t$ of arbitrary size $t$.

\begin{theorem}\label{thm:Bt-count}
For any fixed $t\ge 1$, 
every $m$-edge graph $G$ with 
$\lambda(G)>\sqrt m$ satisfies
\[
   \texttt{\#}\,B_t(G)\ \ge\ \left(\frac{1}{t!\,2^{t}}-o(1)\right)m^{t/2}. 
\]
\end{theorem}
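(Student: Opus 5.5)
The plan is to run the argument of Theorem~\ref{thm-C4+} verbatim with the kite count replaced by the book count
\[
\texttt{\#}\,B_t(G)=\sum_{uv\in E(G)}\binom{\tau(uv)}{t},
\]
and the quadratic optimization replaced by one of degree $t$. The case $t=1$ follows from the first-layer triangle bound, so we assume $t\ge 2$. We argue by contradiction. Suppose there are a fixed $\varepsilon>0$ and $m$-edge graphs with $\lambda^2>m\to\infty$ and
\[
\texttt{\#}\,B_t(G)\le \Bigl(\tfrac{1}{t!\,2^t}-\varepsilon\Bigr)\lambda^t .
\]
Take the Perron decomposition $u_*,U,W$ and keep the notation $L_u(uv)$, $L_v(uv)$, $Q_{uv}$, and the split of bad edges into ordinary and exceptional. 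Theorems~\ref{thm:identity}, \ref{thm:halfmissing}, Lemma~\ref{lem:Yu-Yv-lower-weighted} and Lemma~\ref{lem:kite-algebra} are purely spectral, so they carry over unchanged.

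\textbf{Step 1: the analogue of Claim~\ref{lem:kite-local-coeff}.} The center edge $u_*u$ gives $\binom{d_U(u)}{t}\le \texttt{\#}\,B_t(G)$. Hence $d_U(u)^t/t!\le(\tfrac{1}{t!2^t}-\varepsilon+o(1))\lambda^t$, which gives $d_U(u)\le(\tfrac12-\gamma)\lambda$. The split into $I_0$ and $I_1$ goes through as before.
\begin{itemize}
\item For $I_1$, each $v$ has $d+q\ge\xi\lambda$, so the edges $u_*v$ and $uv$ carry $\Omega_\xi(\lambda^t)$ books. Thus $|I_1|=O(1)$.
\item Each $v\in I_1$ satisfies $d^t+q^t\le(2^{-t}-t!\varepsilon)\lambda^t$. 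Power-mean then gives $d+q\le 2^{1-1/t}(2^{-t}-t!\varepsilon)^{1/t}\lambda<\lambda$, since $2^{1-1/t}\cdot\tfrac12<1$.
\end{itemize}
This again yields the coefficient $\tfrac12-\delta+o(1)$, and so the ordinary bad edges are absorbed as in \eqref{eq:kite-ordinary-absorbed}.

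\textbf{Step 2: the analogue of Claim~\ref{cl:big-O}.} We get $\mathcal E\ne\emptyset$ exactly as before. For the bound on $|\mathcal E|$:
\begin{itemize}
\item Exceptional edges with $|Q_{uv}|\ge\lambda/4$ each carry $\Omega(\lambda^t)$ books, so there are $O(1)$ of them.
\item The remaining exceptional edges have $d_U(u)+d_U(v)\ge\lambda/2-O(1)$. Their number is at most $O(\lambda^{-1})\sum_u d_U(u)^2$.
\item To bound $\sum_u d_U(u)^2$, use $\sum_u d_U(u)=2e(U)\le 2m$ and $d_U(u)\le\lambda$. Then $\sum d_U(u)^2\le\lambda\sum d_U(u)$, which is only $O(\lambda^3)$, too weak.
\item Instead, split by size. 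Vertices with $d_U(u)\ge\eta\lambda$ number $O(1)$, since each carries $\Omega(\lambda^t)$ books. Vertices with $d_U(u)<\eta\lambda$ cannot be endpoints of these exceptional edges once $\eta<1/4$, unless the partner has $d_U\ge\lambda/4$.
\item So every such exceptional edge touches one of the $O(1)$ large-degree vertices, giving $|\mathcal E|=O(\lambda)$.
\end{itemize}
Then $\Phi=O(\lambda)$, and \eqref{eq:kite-missing-Olambda} holds.

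\textbf{Step 3: local count and optimization (the main obstacle).} Fix an exceptional edge $uv$ with weights $A,B,C$ and $A+B+2C\ge\lambda-O(1)$. The three center edges $u_*u$, $u_*v$, $uv$ contribute $(A^t+B^t+C^t)/t!-O(\lambda^{t-1})$. For $w\in Q$, the center edge $uw$ has common neighbourhood containing $v$ together with the set of $z\in N_U(u)$ adjacent to $w$, of weight $A_w$. By \eqref{eq:kite-missing-Olambda}, $\sum_{w\in Q}x_w(A-A_w)=O(\lambda)$.

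Since $\binom{\cdot}{t}$ is convex, Jensen over $w$ weighted by $x_w$ should give roughly $\sum_{w\in Q}x_w A_w^{t-1}/(t-1)!\ge CA^{t-1}/(t-1)!-O(\lambda^{t-1})$. The analogous bound holds for $vw$ with $B$. This yields
\[
\texttt{\#}\,B_t(G)\ge \frac{A^t+B^t+C^t}{t!}+\frac{C(A^{t-1}+B^{t-1})}{(t-1)!}-O(\lambda^{t-1}).
\]
Here I must be careful: exponent $t-1$ on $A_w$ loses information. I will instead lower-bound $\binom{A_w+1}{t}$ directly and accept an $O(\lambda^{t-1})$ error, using that the total deficit $\sum x_w(A-A_w)$ is $O(\lambda)$ and $A_w\le\lambda$.

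It then remains to minimize $F=\frac{a^t+b^t+c^t}{t!}+\frac{c(a^{t-1}+b^{t-1})}{(t-1)!}$ subject to $a+b+2c=1$. Setting $c=0$ and $a=b=\tfrac12$ gives $2\cdot 2^{-t}/t!$, which is too large. Setting $a=b=0$ and $c=\tfrac12$ gives exactly $2^{-t}/t!$, matching the target. The hard step is checking that this is the global minimum. By symmetry and convexity, first reduce to $a=b=s$ and $c=\tfrac12-s$. Then show
\[
t!\,F=2s^t+(\tfrac12-s)^t+2t(\tfrac12-s)s^{t-1}\ge 2^{-t}.
\]
I expect this to follow from the binomial expansion of $(s+(\tfrac12-s))^t$, whose first two terms are dominated by $2s^t+2t(\tfrac12-s)s^{t-1}$ and whose remaining terms are handled via $(\tfrac12-s)^t$. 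If that fails, we may need extra rectangle contributions, for example from $u_*$-centered books mixing $U$-neighbours. Sharpness comes from $K_{2^t t,\,t}$ plus one edge, generalizing Example~\ref{exam-sharp-constant}.
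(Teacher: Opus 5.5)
Your Steps 1 and 2 are sound and coincide with the paper's argument. In Step 2, the version that works is your last one: every remaining exceptional edge has an endpoint among the $O(1)$ vertices with $d_U\ge\lambda/4-O(1)$, and each of these lies in at most $\lambda$ edges of $U$.

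\textbf{The gap is in Step 3.} The degree-$t$ optimization you need is false for every $t\ge 3$. Take $a=b=s$, $c=\tfrac12-s$ and $g(s):=2s^t+(\tfrac12-s)^t+2t(\tfrac12-s)s^{t-1}$. Then $g'(0)=-t\,2^{1-t}<0$, because the rectangle term is $O(s^{t-1})$ and cannot offset the linear loss in $c^t$. Concretely, for $t=3$ and $s=0.1$ one gets $g=0.09<\tfrac18$. Equivalently, your binomial-domination step would need $s^3+3ps^2\ge 3sp^2$ with $p=\tfrac12-s$, which fails for small $s$. Only for $t=2$ is the rectangle term $4(\tfrac12-s)s$ linear in $s$, which is why the kite proof closes.

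The root cause is that on the spines $uw$, $w\in Q$, you count only books having $v$ as a page, i.e.\ $\binom{A_w}{t-1}$. This discards the dominant $\binom{A_w}{t}$ part. Your fallback does not rescue the exact optimization either. Linearizing $\binom{y+1}{t}$ against the deficit $\sum_{w}x_w(A-A_w)=O(\lambda)$ costs $O(A^{t-1}\lambda)=O(\lambda^t)$, not $O(\lambda^{t-1})$. Moreover, the resulting main term $\approx CA^t/t!$ is not homogeneous of degree $t$, so a normalized minimization over $a+b+2c=1$ is the wrong framework.

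\textbf{What is needed} is a cruder rectangle estimate together with a case split. Set $a_w:=\sum_{z\in N_U(u),\,zw\in E(G)}x_z\le A$. The missing-weight bound gives $\sum_{w\in Q}x_wa_w\ge AC-O(\lambda)$. By a Markov argument, $Q':=\{w\in Q:\ a_w\ge A/2\}$ then satisfies $|Q'|\ge C-O(\lambda/A)$, and each spine $uw$ with $w\in Q'$ carries at least $\binom{\lfloor A/2\rfloor}{t}$ books. The cases are as follows.
\begin{itemize}
\item If $C\ge\tfrac{\lambda}{2}(1-\delta_1)$, the spine $uv$ alone gives about $(1-\delta_1)^t\lambda^t/(t!\,2^t)$.
\item Otherwise, after relabelling, $A\ge\tfrac{\delta_1}{2}\lambda$. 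If in addition $C\le C_0$, then $A+B\ge\lambda-O(1)$, and the spines $u_*u,u_*v$ give at least $(A+B)^t/(t!\,2^{t-1})$, twice the target.
\item If $C>C_0$, the at least $C/2$ spines in $Q'$ give $\tfrac{C}{2}(\delta_1\lambda/4)^t/t!$, which exceeds the target once $C_0\ge 2(2/\delta_1)^t$.
\end{itemize}
Separately, $K_{2^t t,\,t}$ plus an edge is a fixed graph. The sharpness family is $K_{4k,k}$ plus one edge with $k\to\infty$, though sharpness is not part of this statement.
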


The constant $\tfrac{1}{t!\,2^{t}}$ is best possible by the construction in Example \ref{exam-sharp-constant}. Let $G$ be the graph obtained from a complete bipartite graph $K_{4k,k}$ by adding one edge inside the partite set of size $4k$. As proved in Example \ref{exam-sharp-constant}, we have $\lambda (G)> \sqrt{m}$, where $m=4k^2+1$. 
Every book in $G$ has its spine on the added edge, whose endpoints share exactly $k$ common neighbors, so $\texttt{\#}B_t(G)=\binom{k}{t}$. Setting $k\to \infty$  yields $\texttt{\#} B_t(G)=\bigl(\tfrac{1}{t!\,2^{t}}-o(1)\bigr)m^{t/2}$. 
 Theorem \ref{thm:Bt-count} and this construction  imply  
\[
   \lim_{m\to\infty}\ \min_{\lambda(G)>\sqrt m}\
   \frac{\texttt{\#}\,B_t(G)}{m^{t/2}}\;=\;\frac{1}{t!\,2^{t}} .
\]
 We will show that the approach of Theorems \ref{thm:book-free} and \ref{thm-C4+} remains valid  for Theorem \ref{thm:Bt-count} by combining the spectral surplus identity together with the half-missing-weights estimate. 

Observe that the $t=1$ case of Theorem \ref{thm:Bt-count} reduces to count the number of triangles, and it can be proved by applying  Ning--Zhai's spectral theorem \cite{NingZhai2023}. 
 For every $t\ge2$, 
 every copy of the book $B_t$ is uniquely determined by its spine
together with $t$ of the common neighbors of the endpoints of the spine. Hence, by denoting $\tau(uv):=|N(u)\cap N(v)|$, we have 
\begin{equation}\label{eq:book-count-t}
   \texttt{\#}\,B_t(G)=\sum_{uv\in E(G)}\binom{\tau(uv)}{t}.
\end{equation}
This directly extends the kite-count formula used in the proof of Theorem~\ref{thm-C4+}.

\begin{proof}[{\bf Proof of Theorem~\ref{thm:Bt-count}}]
Write $\lambda=\lambda(G)$; as in the proof of Theorem~\ref{thm-C4+} the
hypotheses are equivalent to $\lambda\to\infty$.  Suppose for contradiction that
the bound fails.  Then there are a fixed $\varepsilon\in(0,0.01)$
and a sequence of $m$-edge graphs with $m\to\infty$ such that
\begin{equation}\label{eq:Bt-contra}
   \lambda^2>m
   \qquad\text{and}\qquad
   \texttt{\#}\,B_t(G)\le\Bigl(\frac{1}{t!\,2^{t}}-\varepsilon\Bigr)\lambda^{t}. 
\end{equation}
 We use repeatedly that
$\binom{d}{t}=\tfrac{d^t}{t!}+O(d^{t-1})$ for every $0\le d\le\lambda$.

\medskip
\noindent\textbf{Step 1. An exceptional edge exists.}
This step is identical to the corresponding part of the proof of
Theorem~\ref{thm-C4+}.  The weighted bounds
in Lemma \ref{lem:Yu-Yv-lower-weighted} and the elementary inequality in Lemma~\ref{lem:kite-algebra} are
independent of $t$.  The local coefficient estimate persists:

\begin{claim}\label{cl:Bt-coeff}
Under \eqref{eq:Bt-contra}, there is $\delta=\delta(\varepsilon)>0$ such that 
for every $u\in U$,
\[
   \sum_{\substack{v\in N_U(u)\\ uv\ \mathrm{ordinary\ bad}}}
   \frac{1}{\lambda-L_v(uv)}\ \le\ \frac12-\delta+o(1).
\]
\end{claim}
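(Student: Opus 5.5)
The plan is to follow the proof of Claim~\ref{lem:kite-local-coeff}, replacing $\binom{\cdot}{2}$ by $\binom{\cdot}{t}$ via the book-count formula \eqref{eq:book-count-t}. The first step is a degree bound. The edge $u_*u$ has $d_U(u)$ common neighbours, so
\[
\binom{d_U(u)}{t}\le \texttt{\#}\,B_t(G)\le \Bigl(\frac{1}{t!\,2^t}-\varepsilon\Bigr)\lambda^t .
\]
Since $\binom{d}{t}=d^t/t!+O(d^{t-1})$, this gives $d_U(u)^t\le (2^{-t}-t!\,\varepsilon+o(1))\lambda^t$. Hence $d_U(u)\le (\tfrac12-\gamma+o(1))\lambda$ for some $\gamma=\gamma(\varepsilon,t)>0$. (If some $d_U(u)$ were larger than $\lambda$, the bound would fail outright, so we may restrict to $d\le\lambda$, where the binomial asymptotics are uniform.)

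Next, fix a small $\xi>0$ and split the ordinary bad neighbours $v$ of $u$ into $I_0=\{L_v(uv)\le\xi\lambda\}$ and $I_1=\{L_v(uv)>\xi\lambda\}$.

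\textbf{The set $I_0$.} Each term is at most $1/((1-\xi)\lambda)$. The sum over $I_0$ is therefore at most $d_U(u)/((1-\xi)\lambda)\le(\tfrac12-\gamma+o(1))/(1-\xi)\le \tfrac12-\gamma/2+o(1)$ once $\xi$ is small in terms of $\gamma$.

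\textbf{The set $I_1$.} Put $d=d_U(v)$ and $q=|Q_{uv}|$. Then $L_v(uv)\le 1+d+q$, so $d+q\ge \xi\lambda-O(1)$. The centre edge $u_*v$ has $d$ common neighbours and $uv$ has $q+1$ (namely $u_*$ together with $Q_{uv}$). These two distinct centre edges contribute
\[
\binom{d}{t}+\binom{q+1}{t}\ge \frac{d^t+q^t}{t!}-O(\lambda^{t-1})\ge \frac{2^{1-t}(d+q)^t}{t!}-O(\lambda^{t-1})\ge c_\xi\lambda^t ,
\]
using convexity of $s\mapsto s^t$. As $v$ ranges over $I_1$, the centre edges $u_*v$ and $uv$ are all distinct. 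Comparing with \eqref{eq:Bt-contra} then gives $|I_1|=O_{\varepsilon,\xi}(1)$.

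For each single $v\in I_1$, the same inequality gives $d,q=O(\lambda)$ and $d^t+q^t\le (2^{-t}-t!\,\varepsilon+o(1))\lambda^t$. Power-mean monotonicity yields
\[
d+q\le 2^{1-1/t}(d^t+q^t)^{1/t}\le 2^{1-1/t}\cdot\tfrac12(1-\eta)\lambda=2^{-1/t}(1-\eta)\lambda
\]
for some $\eta=\eta(\varepsilon,t)>0$. So $L_v(uv)\le\kappa\lambda+o(\lambda)$ with $\kappa<1$; indeed, for $t\ge2$ we have $\kappa\le 2^{-1/2}<0.8$, matching the kite case. Each term $1/(\lambda-L_v(uv))$ is then $O(1/\lambda)$. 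Since there are $O(1)$ terms, the $I_1$ contribution is $o(1)$. Adding the two parts gives the claim with $\delta=\gamma/4$.

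The main obstacle is making sure that $\kappa<1$ holds uniformly for general $t$. This is the step where the vertex-disjointness of the two centre edges and the convexity inequality must combine correctly. The power-mean computation above handles it because $2^{-1/t}<1$ for every $t\ge1$. A secondary task is tracking that all $O(\lambda^{t-1})$ error terms are negligible against $\varepsilon\lambda^t$, which needs only $t$ fixed and $\lambda\to\infty$.
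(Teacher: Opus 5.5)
Your proposal is correct and follows the paper's proof essentially step for step. It uses the degree bound from the spine $u_*u$, the $I_0/I_1$ split, $|I_1|=O(1)$ via the distinct spines $u_*v,uv$, and the power-mean bound $d+q\le 2^{-1/t}(1-\eta)\lambda$ to get $\kappa<1$.

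One small slip: $2^{-1/t}$ increases with $t$, so your remark that $\kappa\le 2^{-1/2}<0.8$ for all $t\ge 2$ is only valid for $t=2$. Also, the edges $u_*v$ and $uv$ share $v$, so they are distinct but not vertex-disjoint. Neither point affects the argument, since only $\kappa<1$ and distinctness of the spines are used.
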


\begin{proof}[Proof of claim]
The proof is similar to that of Claim~\ref{lem:kite-local-coeff} with two
substitutions.  First, using $\binom{d_U(u)}{t}\le\texttt{\#}\,B_t(G)$ and
\eqref{eq:Bt-contra} give $d_U(u)\le(\tfrac12-\gamma)\lambda$ for some
$\gamma=\gamma(\varepsilon)>0$.  Second, for $v\in I_1$ (with $d=d_U(v)$ and 
$q=|Q_{uv}|$), one has $\binom{d}{t}+\binom{q+1}{t}\le\texttt{\#}\,B_t(G)
\le(\tfrac1{t!2^t}-\varepsilon)\lambda^t$, whence
$d^t+q^t\le(\tfrac1{2^t}-t!\,\varepsilon+o(1))\lambda^t$, and the
Cauchy--Schwarz step is replaced by the power-mean inequality
\[
   d+q\ \le\ 2^{\,1-1/t}\bigl(d^t+q^t\bigr)^{1/t}\ \le\ 2^{-1/t}\lambda\,(1+o(1)),
\]
with $2^{-1/t}<1$.  Thus $L_v(uv)\le\kappa\lambda+o(\lambda)$ for some
$\kappa=\kappa(\varepsilon)<1$, and each such term is $O(1/\lambda)$.  All other
steps are unchanged.
\end{proof}

Summing the inequality of Lemma~\ref{lem:kite-algebra} over ordinary bad edges
and applying Claim~\ref{cl:Bt-coeff} and Theorem~\ref{thm:halfmissing} yields,
exactly as in \eqref{eq:kite-ordinary-absorbed},
\begin{equation}\label{eq:Bt-ordinary}
   \sum_{\substack{uv\in E(U)\\ uv\ \mathrm{ordinary\ bad}}}(x_u+x_v-1)
   \le(1-2\delta+o(1))\Phi .
\end{equation}
If the set $\mathcal E$ of exceptional edges were empty, then
Theorem~\ref{thm:identity} would give
$$\lambda^2-m\le-(2\delta-o(1))\Phi < 0, $$  contradicting $\lambda^2>m$.  Hence, we must have 
$\mathcal E\neq\emptyset$.

\medskip
\noindent\textbf{Step 2. The deficit $\Phi$ is $O(\lambda)$.}
In the setting of $t\ge 3$, the argument of Claim~\ref{cl:big-O} must be modified.  
That proof bounds 
$\sum_{u\in U}d_U(u)^2$ through $\sum_u\binom{d_U(u)}{2}\le
\texttt{\#}\,C_4^+(G)=O(\lambda^2)$. However, for $t\ge3$,  no such bound on the number of
kites is available, so we need to argue differently.

\begin{claim}\label{cl:Bt-deficit}
Under \eqref{eq:Bt-contra}, we have $|\mathcal E|=O(\lambda)$ and $\Phi=O(\lambda)$.
\end{claim}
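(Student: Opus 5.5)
The idea is to replace the degree-square bound used in Claim~\ref{cl:big-O}, which relied on kites, by a pigeonhole argument on \emph{spines}. Under \eqref{eq:Bt-contra}, any set of edges each carrying $\Omega(\lambda^t)$ copies of $B_t$ through \eqref{eq:book-count-t} with pairwise distinct spines has only $O(1)$ members. I expect to show in fact that $|\mathcal E|=O(1)$, which is stronger than needed.

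First, as in the proof of Claim~\ref{cl:Bt-coeff}, the spine $u_*u$ gives $\binom{d_U(u)}{t}\le \texttt{\#}B_t(G)$. Hence
\[
d_U(u)\le \bigl(\tfrac12-\gamma\bigr)\lambda \qquad\text{for every } u\in U .
\]
Let $uv\in\mathcal E$. Exactly as in \eqref{eq:kite-exceptional-size}, the exceptional condition gives $d_U(u)+d_U(v)+2|Q_{uv}|\ge \lambda-O(1)$. I split $\mathcal E$ at the threshold $|Q_{uv}|\ge \lambda/8$.

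\textbf{Case $|Q_{uv}|\ge\lambda/8$.} The spine $uv$ has at least $|Q_{uv}|+1$ common neighbours, namely $Q_{uv}\cup\{u_*\}$. So it carries at least $\binom{\lambda/8}{t}\ge c_t\lambda^t$ books. The spines are distinct, so \eqref{eq:Bt-contra} leaves only $O(1)$ such edges.

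\textbf{Case $|Q_{uv}|<\lambda/8$.} Here $d_U(u)+d_U(v)\ge \tfrac34\lambda-O(1)$. Combined with the upper bound $d_U(\cdot)\le(\tfrac12-\gamma)\lambda$, this forces both endpoints to satisfy $d_U\ge(\tfrac14+\gamma)\lambda-O(1)\ge\lambda/4$. So both endpoints lie in
\[
H:=\{z\in U:\ d_U(z)\ge\lambda/4\}.
\]
Each $z\in H$ gives the spine $u_*z$ with at least $\binom{\lambda/4}{t}\ge c'_t\lambda^t$ books, and these spines are distinct. Hence $|H|=O(1)$, and the edges of this case, all lying inside $H$, number at most $\binom{|H|}{2}=O(1)$. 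Altogether $|\mathcal E|=O(1)\subseteq O(\lambda)$.

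For $\Phi$, I repeat the end of the proof of Claim~\ref{cl:big-O}. Non-bad edges of $E(U)$ contribute nonpositively to Theorem~\ref{thm:identity}. The ordinary bad edges are controlled by \eqref{eq:Bt-ordinary}. Each exceptional edge contributes $x_u+x_v-1\le1$. Together with $\lambda^2-m>0$ this gives
\[
0<\lambda^2-m\le |\mathcal E|-(2\delta-o(1))\Phi ,
\]
so $\Phi\le |\mathcal E|/(2\delta-o(1))=O(1)$. In particular $\Phi=O(\lambda)$.

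The only delicate point is the second case. It needs the upper bound $d_U(u)\le(\tfrac12-\gamma)\lambda$ from Claim~\ref{cl:Bt-coeff} so that \emph{both} endpoints have linear $U$-degree, and not just their sum. I would check that the threshold $\lambda/8$ leaves a positive margin: $\tfrac34-(\tfrac12-\gamma)=\tfrac14+\gamma>0$. Any fixed threshold below $\lambda/4$ works in the same way.
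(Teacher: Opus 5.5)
Your proof is correct and follows the paper's route. Like the paper, you split $\mathcal E$ by the size of $Q_{uv}$, bound the large-$Q_{uv}$ edges by counting books on the distinct spines $uv$, and handle the rest through the bounded set $H$ of vertices with large $d_U$ via the spines $u_*z$, before feeding $|\mathcal E|$ into Theorem~\ref{thm:identity} together with \eqref{eq:Bt-ordinary}. The one difference is that you also use the cap $d_U(\cdot)\le(\tfrac12-\gamma)\lambda$ to force \emph{both} endpoints into $H$. This replaces the paper's count $|H|\cdot\lambda=O(\lambda)$ by $\binom{|H|}{2}=O(1)$, so you get the stronger (though unneeded) conclusions $|\mathcal E|=O(1)$ and $\Phi=O(1)$.
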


\begin{proof}[Proof of claim]
For $uv\in\mathcal E$, we have, exactly as in \eqref{eq:kite-exceptional-size},
\begin{equation}\label{eq:Bt-exc-size}
   d_U(u)+d_U(v)+2|Q_{uv}|\ \ge\ \lambda-O(1).
\end{equation}

First, we 
consider the {exceptional edges $uv\in E(U)$ with $|Q_{uv}|\ge\lambda/4$.}  
The spine $uv$ has
 at least $|Q_{uv}|\ge\lambda/4$ common neighbors in $W$, so it leads to at least
$\binom{\lfloor\lambda/4\rfloor}{t}\ge(\tfrac1{t!\,4^t}-o(1))\lambda^t$ copies
of $B_t$ on distinct spines. By \eqref{eq:Bt-contra}, we have $\texttt{\#}\,B_t(G)\le\bigl(\frac{1}{t!\,2^{t}}-\varepsilon\bigr)\lambda^{t}$. So the number of exceptional edges with $|Q_{uv}| \ge \lambda /4$ is at most
$ 2^t + o(1)= O(1)$. 

Second, we count the exceptional edges $uv\in E(U)$ with $|Q_{uv}|<\lambda/4$.  
By \eqref{eq:Bt-exc-size}, we have 
$$ \max\{d_U(u),d_U(v)\}\ge \frac\lambda4-O(1).$$ 
Let
$H:=\{y\in U:\ d_U(y)\ge\tfrac\lambda4-O(1)\}$.  Each vertex $y\in H$ gives a spine
$u_*y$ of codegree $d_U(y)\ge\lambda/4$, which leads to at least
$(\tfrac1{t!\,4^t}-o(1))\lambda^t$ copies of $B_t$ on distinct spines. Since  $\texttt{\#}\,B_t(G)\le\bigl(\frac{1}{t!\,2^{t}}-\varepsilon\bigr)\lambda^{t}$ by \eqref{eq:Bt-contra}, we get 
$|H|\le2^t+o(1)=O(1)$. 
Since every exceptional edge with $|Q_{uv}|<\lambda/4$
has an endpoint in $H$, and each vertex $y\in H$ lies in at most $d_U(y)\le\lambda$
edges of $E(U)$, which is guaranteed by using (\ref{eq:Bt-contra}) again, we obtain that there are at most $|H|\cdot\lambda=O(\lambda)$ such edges.
Consequently, we conclude that $|\mathcal E|=O(\lambda)$, as needed. 

Finally, since non-bad edges of $E(U)$ contribute at most $0$,
Theorem~\ref{thm:identity} and \eqref{eq:Bt-ordinary} give
\begin{align*}
  \lambda^2-m &= \sum_{uv\in E(U)} (x_u+x_v -1) - \Phi \\ 
   &\le\sum_{uv\in\mathcal E}(x_u+x_v-1)+(1-2\delta+o(1))\Phi-\Phi \\
   & \le |\mathcal E|-(2\delta-o(1))\Phi,
\end{align*}
Since $ \lambda^2- m >0$ and $|\mathcal{E}| = O(\lambda)$, we get $\Phi=O(\lambda)$, as needed. 
\end{proof}

Using the half-missing-weights estimate of  Theorem~\ref{thm:halfmissing} and Claim~\ref{cl:Bt-deficit}, it follows that 
\begin{equation}\label{eq:Bt-missing}
   \sum_{\substack{u\in U,\ w\in W\\ uw\notin E(G)}}x_ux_w
   =\sum_{u\in U}x_uY(u)\le 2\Phi=O(\lambda).
\end{equation}

\medskip
\noindent\textbf{Step 3. Local count and conclusion.}
Fix an exceptional edge $uv\in\mathcal E$ and denote 
$Q:=Q_{uv}$, $A:=\sum_{z\in N_U(u)}x_z$, $B:=\sum_{z\in N_U(v)}x_z$,
$C:=\sum_{w\in Q}x_w$.  Then $A,B,C\le\lambda$ and, as in
\eqref{eq:kite-exceptional-ABC},
\begin{equation}\label{eq:Bt-ABC}
   T:=A+B+2C\ \ge\ \lambda-O(1).
\end{equation}
For $t=2$, the required count came in Claim~\ref{cl:lower-copies} from three
center edges and two rectangles.  For general $t\ge 3$, the center edges alone no
longer suffice, and the key estimate is the following.

\begin{claim}  
\label{lem:Bt-rect} 
If $A\to\infty$, then
$\texttt{\#}\,B_t(G)\ge\bigl(C-O(\lambda/A)\bigr)\binom{\lfloor A/2\rfloor}{t}$.
The symmetric statement with $A,u$ replaced by $B,v$ also holds.
\end{claim}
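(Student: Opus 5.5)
Fix the exceptional edge $uv$ with $Q=Q_{uv}$ and $N:=N_U(u)\setminus\{v\}$. The plan is to produce, for many $w\in Q$, a spine $uw$ with at least $\lfloor A/2\rfloor$ common neighbours, and then add up $\binom{\tau(uw)}{t}$ over these distinct spines using \eqref{eq:book-count-t}. For $w\in Q$, every $z\in N$ with $zw\in E(G)$ is a common neighbour of $u$ and $w$, and so is $v$. Hence $\tau(uw)\ge 1+|\{z\in N: zw\in E(G)\}|$. Since all Perron coordinates are at most $1$, we have $|\{z\in N: zw\in E(G)\}|\ge \sum_{z\in N,\,zw\in E(G)}x_z = (A-x_v)-M(w)$, where
\[
M(w):=\sum_{z\in U\setminus N(w)}x_z
\]
is the missing weight of $w$ toward $U$; this is the quantity $X(w)$ in the proof of Theorem~\ref{thm:halfmissing}.

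Call $w\in Q$ \emph{good} if $M(w)\le A/2-1$, so that $\tau(uw)\ge \lfloor A/2\rfloor$ for good $w$. The total weight of bad vertices is controlled by \eqref{eq:Bt-missing} via a Markov-type step:
\[
\sum_{w\in Q\text{ bad}}x_w\,(A/2-1)\le \sum_{w\in W}x_wM(w)=\sum_{u'\in U}x_{u'}Y(u')=O(\lambda),
\]
where the equality is the order-switching identity from the proof of Theorem~\ref{thm:halfmissing}. Hence the weight of bad vertices in $Q$ is $O(\lambda/A)$ once $A\to\infty$. The number of good vertices is then at least their total weight, which is at least $C-O(\lambda/A)$, because $x_w\le 1$.

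The spines $uw$ for distinct good $w$ are distinct edges, and each carries at least $\binom{\lfloor A/2\rfloor}{t}$ copies of $B_t$. Summing gives $\texttt{\#}\,B_t(G)\ge (C-O(\lambda/A))\binom{\lfloor A/2\rfloor}{t}$. The statement for $B$ and $v$ follows by symmetry, using spines $vw$ and common neighbours in $N_U(v)\setminus\{u\}$ together with $u$.

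The only delicate step is the threshold bookkeeping. We need the loss $x_v\le 1$ and the rounding to be absorbed by the slack $A/2-1$, and the Markov bound to use $\sum_{w\in W} x_wM(w)=\sum_{u'\in U}x_{u'}Y(u')$ exactly as in Theorem~\ref{thm:halfmissing}. The hypothesis $A\to\infty$ is needed so that $A/2-1$ is of order $A$, which makes the bad weight $O(\lambda/A)$.
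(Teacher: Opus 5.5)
Your proof is correct and is essentially the paper's argument. The paper shows $\sum_{w\in Q}x_w a_w\ge AC-O(\lambda)$, equivalently that the weighted missing mass $\sum_{w\in Q}x_w(A-a_w)$ is $O(\lambda)$ by \eqref{eq:Bt-missing}, and then uses $a_w\le A$ in a Markov-type averaging to find $Q'\subseteq Q$ of weight $C-O(\lambda/A)$ with $a_w\ge A/2$. This is your Markov step, except that you measure the missing weight toward all of $U$ via $M(w)=X(w)$ rather than only toward $N_U(u)$. One small slip: $\sum_{z\in N,\,zw\in E(G)}x_z=(A-x_v)-M(w)$ should read $\ge$, since $M(w)$ counts missing weight over all of $U$ and not only over $N$; the inequality goes in the direction you need, so nothing breaks.
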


\begin{proof}[Proof of claim]
For each $w\in Q$, we denote 
$$ a_w=\sum_{z\in N_U(u),\,zw\in E(G)}x_z\le A. $$ 
Reversing the
order of summation and using \eqref{eq:Bt-missing}, we get 
\[
   \sum_{w\in Q}x_w a_w
   =\sum_{z\in N_U(u)}x_z\!\!\sum_{w\in Q,\,zw\in E}\!\!x_w
   =\sum_{z\in N_U(u)}x_z\Bigl(C-\!\!\sum_{w\in Q,\,zw\notin E}\!\!x_w\Bigr)
   \ge AC-O(\lambda).
\]
Let $Q':=\{w\in Q:\ a_w\ge A/2\}$ and $C_1:=\sum_{w\in Q'}x_w$.  Since $a_w\le A$
on $Q'$ and $a_w<A/2$ off $Q'$,
$$ 
AC-O(\lambda)\le \sum_{w\in Q} x_w a_w \le  A\,C_1+\frac A2(C-C_1). 
$$ 
Hence, we have 
$\tfrac A2 C_1\ge\tfrac A2 C-O(\lambda)$ and $C_1\ge C-O(\lambda/A)$. Since each
weight $x_w$ is at most $1$, we obtain $|Q'|\ge C_1\ge C-O(\lambda/A)$.  For each vertex $w\in Q'$, the
common neighbors of $u$ and $w$ contain every $z\in N_U(u)$ adjacent to $w$, so
$$ \tau(uw)\ge |\{z\in N_U(u):zw\in E\}| \ge a_w\ge A/2. $$ 
Then the spine $uw$ carries
at least $\binom{\lfloor A/2\rfloor}{t}$ copies of $B_t$. Since the spines $uw$
(with $w\in Q'\subseteq W$) are pairwise distinct, by \eqref{eq:book-count-t}, 
it follows that 
\[
   \texttt{\#}\,B_t(G)\ge\sum_{w\in Q'}\binom{\tau(uw)}{t}
   \ge|Q'|\binom{\lfloor A/2\rfloor}{t}
   \ge\bigl(C-O(\lambda/A)\bigr)\binom{\lfloor A/2\rfloor}{t}. 
\]
This completes the proof of Claim \ref{lem:Bt-rect}. 
\end{proof}

Now, we return to the proof of Theorem~\ref{thm:Bt-count}.   
Choose $\delta_1=\delta_1(\varepsilon)\in(0,1)$ so
small that $(1-\delta_1)^t>1-t!\,2^t\varepsilon$,
and choose a constant
$C_0=C_0(\varepsilon,\delta_1)\ge 2(2/\delta_1)^t$
large enough to absorb the implicit constants in the $O(\cdot)$ terms below.
 We split into three cases according to the size of
$C$, and then we derive a contradiction with \eqref{eq:Bt-contra} in each case.

\subsection*{Case 1:  
$C\ge\tfrac\lambda2(1-\delta_1)$}   

The spine $uv$ has codegree
$\tau(uv)\ge|Q|+1\ge C$, so by \eqref{eq:book-count-t},
\[
   \texttt{\#}\,B_t(G)\ge\binom{\lceil C\rceil}{t}\ge\frac{C^t}{t!}(1-o(1))
   \ge\frac{(1-\delta_1)^t}{t!\,2^t}\lambda^t(1-o(1))
   >\Bigl(\frac1{t!2^t}-\varepsilon\Bigr)\lambda^t,
\]
by the choice of $\delta_1$, which leads to a contradiction.

\medskip 
In the remaining cases $C<\tfrac\lambda2(1-\delta_1)$, so by \eqref{eq:Bt-ABC}, we have 
$A+B\ge\delta_1\lambda-O(1)$. Relabelling $u,v$ if necessary, we may assume that 
$A\ge\tfrac{\delta_1}{2}\lambda-O(1)=\Theta(\lambda)$.

\subsection*{Case 2: $C\le C_0$} 

Then $A+B\ge\lambda-2C_0-O(1)=\lambda-O(1)$, and
the spines $u_*u$, $u_*v$ give
\[
   \texttt{\#}\,B_t(G)\ge\binom{\lceil A\rceil}{t} \!+ \! \binom{\lceil B\rceil}{t}
   \ge\frac{A^t+B^t}{t!}-O(\lambda^{t-1})
   \ge\frac{(A+B)^t}{t!\,2^{t-1}}-O(\lambda^{t-1})
   \ge\frac{\lambda^t}{t!\,2^{t-1}}(1-o(1)),
\]
where we used the convexity bound $A^t+B^t\ge(A+B)^t/2^{t-1}$.  This contradicts
\eqref{eq:Bt-contra}.

\subsection*{Case 3: $C_0<C<\tfrac\lambda2(1-\delta_1)$}   

Here
$A\ge\tfrac{\delta_1}{2}\lambda$, so $A\to\infty$, $O(\lambda/A)=O(1)$ and
$C-O(\lambda/A)\ge C/2$.  By Claim~\ref{lem:Bt-rect},
\[
   \texttt{\#}\,B_t(G)\ge\frac{C}{2}\binom{\lfloor A/2\rfloor}{t}
   \ge\frac{C}{2}\cdot\frac{(\delta_1\lambda/4)^t}{t!}(1-o(1)).
\]
Since $C>C_0=2(2/\delta_1)^t$, we have
$\tfrac C2(\tfrac{\delta_1}{4})^t>\tfrac1{2^t}$. So the right-hand side in the above exceeds
$\tfrac{\lambda^t}{t!\,2^t}(1-o(1)) 
>(\tfrac1{t!2^t}-\varepsilon)
\lambda^t$ for sufficiently large $\lambda$, again a contradiction.

All three cases lead to a contradiction with 
\eqref{eq:Bt-contra}. This completes the proof. 
\end{proof}

\begin{remark}
For $t=2$, the above argument is an alternative to the proof in Section~\ref{sec:4}:
Case~1 alone reproduces the constant $\tfrac18$, while Cases~2--3 show that
any configuration with $A$ or $B$ of order $\lambda$ carries strictly more
kites.  For $t\ge3$, the modification in Step~2 (Claim~\ref{cl:Bt-deficit}) and
the count estimate in Step~3 (Claim \ref{lem:Bt-rect}) are essential, since the kite-specific fact 
$\texttt{\#}\,C_4^+(G)=O(\lambda^2)$ and the exact lower bound of Claim~\ref{cl:lower-copies} are no longer available when we count $\texttt{\#}B_t$ for each $t\ge 3$.
\end{remark}

\section{Concluding remarks}
\label{sec:concluding}

In this paper, we proved several edge-spectral supersaturation results for triangles and books, which give spectral counterparts and refinements of classical  results. 
For triangles, Theorem~\ref{thm-edge-spect-LS} shows 
that $\lambda(G)\ge\sqrt{m} + q$ forces more
than $q\, m$ triangles, giving an edge-spectral counterpart of the Lov\'asz--Simonovits theorem. Theorem~\ref{thm:strong} refines this by establishing the bound $t(G)\ge m(\lambda-\sqrt{m}\,)$, with equality only for complete bipartite graphs. 
This improves the classical Bollob\'{a}s--Nikiforov bound \cite{BollobasNikiforov2007} in the range $\sqrt{m} \le \lambda \le  1.3\sqrt{m}$.  
For books, our contribution contains three parts. Theorem~\ref{thm:book-free} is a
spectral Tur\'an-type theorem: every $B_{r+1}$-free graph with $m\ge(4r)^2$ edges
satisfies $\lambda(G)\le\sqrt{m}$, again with the complete bipartite graphs as
the only extremal examples. 
The other two results describe what happens once
$\lambda(G)$ rises above this bound. 
Theorem~\ref{thm-size-14} shows that every Nosal graph contains a book of size more than $\tfrac14\sqrt{m}$, making progress toward the   bound $\tfrac13\sqrt{m}$ proposed by Li, Liu and Zhang~\cite{LiLiuZhang26}, and toward a conjecture of Li, Feng and Peng \cite{LFP2024-triangular} on counting triangular edges. 
Theorem~\ref{thm-C4+} says that every Nosal graph contains at least
$(\tfrac18-o(1))m$ copies of the kite $C_4^+=B_2$, with the best possible constant $\tfrac18$. 
In Theorem \ref{thm:Bt-count}, 
we established a sharp extension $\texttt{\#}\, B_t(G) \ge (\frac{1}{t!2^t} - o(1))m^{t/2}$ for all $t\ge 1$, which unifies both triangles and kites.    

\medskip 
We close with some related problems and possible directions for future work. 

\vspace{-3mm}
\paragraph{Substructures with layer-by-layer jump phenomenon.}
The triangle count jumps in discrete layers 
as $\lambda(G)$ exceeds successive
thresholds: the first layer $\lambda (G)>\sqrt m$ forces $t(G)\ge 
\lfloor \frac{1}{2}(\sqrt{m} -1) \rfloor$,
the second layer $\lambda (G) \ge \tfrac{1}{2}(1+\sqrt{4m-3}\,)$ forces
$t(G)\ge\tfrac{1}{2}(m-1)$, and the third layer $\lambda (G)\ge 1+ \sqrt{m-2}$  forces $t(G)\ge m-2$ (Theorem \ref{thm:m-2}). 
It is natural to ask whether the kite count
$\texttt{\#}\,C_4^+$ follows the same layer-by-layer jumps. At the first layer, 
the kite-count is $ (\frac{1}{8} - o(1))m$ (Theorem~\ref{thm-C4+}), and one might expect it to jump to 
$(\frac{1}{8} - o(1))m^2$ across the second layer as seen by $K_2\vee \frac{m-1}{2}K_1$; however, this expectation fails. Let 
$G$ be obtained from $K_{n,n}$ by adding a cycle $C_n$ inside one part. A short computation shows that $G$
has $m=n^2+n$ edges and spectral radius $\lambda(G)=1+\sqrt{1+n^2}$, which satisfies 
$\lambda(G)\ge \frac{1}{2}(1+ \sqrt{4m-3}\,)$, that is, $G$ lies above the second layer. On the other hand, the
only edges of $G$ with at least two common neighbors are the $n$  edges of $C_n$, each lying on
a spine with the $n$ vertices of the opposite part, 
together with the $n^2$ cross edges of $K_{n,n}$,
each of codegree $2$. Hence $\texttt{\#}\,C_4^+(G)=n\binom{n}{2}+n^2=\tfrac12 n^2(n+1)
=\Theta(m^{3/2})$.  
Thus, the kite count does not jump in the same way as the triangle count that takes the split graphs as references,
and determining its true behavior at each layer remains an open problem. 
A broader problem is to determine other graphs whose supersaturations have the layer-by-layer jump behavior. 

\paragraph{An exact formula for $\bm{t(m,\lambda)}$.}
Let $r: = \lambda /\sqrt m$ be the spectral density of $G$. Then $1\le r<\sqrt2$ and the inequality \eqref{eq:spectral-lower} obtained
in the proof of Theorem~\ref{thm:strong} reads
\begin{equation} \label{eq-single}
   t(G)\ge \frac16 \left(\lambda^3-(2m-\lambda^2)^{3/2}
   \right)= \frac16 m^{3/2}\left(r^3-(2-r^2)^{3/2}\right).
\end{equation}
This bound is in fact stronger than both the
Bollob\'as--Nikiforov bound
$\tfrac13\lambda(\lambda^2-m)$ and the linear bound
$m(\lambda-\sqrt m\,)$ of Theorem~\ref{thm:strong}
throughout the range $\sqrt m\le\lambda<\sqrt{2m}$.
Indeed, setting $s:=\sqrt{2-r^2}$, a short computation gives
$ \tfrac16\bigl(r^3-s^3\bigr)
   -\tfrac13 r(r^2-1)=\tfrac16 s^2(r-s)\ge 0$, 
with equality only at $r=1$ and $r=\sqrt2$; 
the linear bound is dominated as well, since $\tfrac16(r^3-s^3)\ge r-1$ on the same range, 
with equality only at $r=1$. 
Thus, the lower
bound (\ref{eq-single}) 
is stronger throughout $\sqrt{m}\le \lambda < \sqrt{2m}$. This suggests the following spectral extremal problem.

\begin{problem}
In the full range $\sqrt m\le\lambda<\sqrt{2m}$,
determine the exact formula
\[
   t(m,\lambda):=\min\bigl\{\,t(G):e(G)=m,\
   \lambda(G)=\lambda\,\bigr\}.
\]
\end{problem}

We next prove that the main term of $t(m,\lambda)$ is given by  
\[
   \lim_{m\to\infty}\frac{t(m,r\sqrt m\,)}{m^{3/2}}
   =\frac16 \left(r^3-(2-r^2)^{3/2}\right).
\] 
In view of (\ref{eq-single}), 
it suffices to construct a graph asymptotically
 attaining this bound. 
Taking the split graph $G=K_{a+1}\vee tK_1$, we have $m=e(G)=\binom{a+1}{2}+(a+1)t$. Moreover, 
one checks that 
\[
  \lambda (G)=\frac{a+\sqrt{a^2+4(a+1)t}}{2}\quad \text{and} \quad 
  t(G)=\binom{a\!+\!1}{3}+\binom{a\!+\!1}{2}t.
\]
The edge count gives
$4(a+1)t=4m-2a^2-2a$, so 
$ \lambda (G) =\frac{1}{2}\big(a+\sqrt{4m-a^2-2a}\, \big)$. Setting $a:=c\sqrt m$ for a constant $c$  to be determined, we get 
${\lambda(G)}/{\sqrt m} =\frac12\big(c+\sqrt{4-c^2 
-{2c}/\!{\sqrt m}}\, \big)
  \to \frac{1}{2} \big(c+\sqrt{4-c^2} \,\big)$ as $m\to \infty$. 
Solving the equation $r=\frac{1}{2}\big( c+\sqrt{4-c^2}\, \big)$ yields $c=r-\sqrt{2-r^2} =r-s$.  

Eliminating $t$, we get $t(G)=\frac{1}{6}(a^3 -a) + \frac{1}{2}am - \frac{1}{4}(a^3 +a^2)$. Using $a=c\sqrt{m}$ and $c=r-s$, we obtain $t(G)/ m^{3/2} \to \frac{c}{2} - \frac{c^3}{12} = \frac{1}{6}(r^3-s^3)$ as $m\to \infty$, matching the bound above. 

At $\lambda=\sqrt m$, this gives $r=s=1$, so $a=0$ and the above sharp construction reduces to $K_{1,m}$; while as $\lambda\to\sqrt{2m}$, it gives $a\to\sqrt{2m}$ and the construction is close to the clique $K_{\sqrt{2m}}$. We refer to
\cite{LPS2020} for the exact minimum number of
triangles in graphs with given edge density.

In addition, a companion problem concerns the graph 
stability. Since the equality in
Theorem~\ref{thm:strong} forces $G$ to be complete
bipartite, one expects a stability version: Suppose
that $G$ satisfies $\lambda(G)\ge\sqrt m+q$ and
$t(G)\le q\,m+o(m^{3/2})$. How close must $G$ be to a
complete bipartite graph?

\paragraph{Closing the gap for the booksize.}
Theorem~\ref{thm-size-14} gives $\bk(G)>\tfrac14\sqrt m$ for every $m$-edge Nosal graph $G$. 
 In our proof, 
the constant $\tfrac14$ comes exactly from  the threshold $\lambda\ge 4r$ during the proof of Theorem~\ref{thm:book-free}, which is in turn dictated by the elementary inequality of Lemma \ref{lem:algebra}, which together with Lemma~\ref{lem:Yu-lower} converts every bad edge $uv$ of $E(U)$ into the missing weights, that is, $x_u+x_v-1\le \frac{1}{2r}\big( x_uY(u)+x_vY(v) \big)$. 
Summing over all edges of $E(U)$, and combining with the half-missing weights estimate, it follows that $\sum_{uv\in E(U)} (x_u+x_v -1)\le \Phi$ and $\lambda^2 \le m$. 

\smallskip 
Li, Liu and Zhang~\cite{LiLiuZhang26} proposed the following problem; recall from the introduction that the constant $\frac13$ cannot be replaced by any larger one as seen by the blowup of the $3$-prism.

\begin{problem}[Li--Liu--Zhang \cite{LiLiuZhang26}]
Every Nosal graph $G$ satisfies $\bk(G)>\tfrac13\sqrt m$.    
\end{problem}

Lowering the threshold from $4r$ to $3r$ in Theorem~\ref{thm:book-free} would deliver the conjectured constant $\frac{1}{3}$. The construction of the triangular-prism blow-up described in the introduction is $B_{r+1}$-free, has $(9-o(1))r^2$ edges, and still satisfies $\lambda(G)>\sqrt m$. 
The obstacle lies in the regime $3r\le\lambda<4r$, where a bad edge of $U$ can no longer be absorbed by the deficit $\Phi$ alone, so one must instead exploit the finer local structure of the induced subgraph $G[U]$ forced by $B_{r+1}$-freeness, e.g., in the spirit of the ordinary/exceptional dichotomy in the proof of Theorem~\ref{thm-C4+}.

 \paragraph{Joints and generalized books.} 
An {\it $r$-joint} is a collection of $r$-cliques sharing a common edge. 
Let $js_r(G)$ denote the maximum number of $r$-cliques in an $r$-joint of $G$. The study of  joints can be traced back to the works of Erd\H{o}s \cite{Erd1969}, and Bollob\'{a}s and Nikiforov \cite{BN2008, BN2011}.  
The {\it generalized book}  $B_{r,k}:=K_r\vee I_k$ is obtained by joining every vertex of a clique $K_r$ to every vertex of an independent set $I_k$ of size $k$. Equivalently, $B_{r,k}$ consists of $k$ cliques $K_{r+1}$ sharing a common $K_r$. We refer to \cite{NR2004, LP2021, CFW2022} for related results. 
Both joints and generalized books extend the classical book. 

\smallskip 
Using the probabilistic method, Li, Liu and Zhang \cite{LiLiuZhang26} proved that for any $r\ge 2$, every $m$-edge graph $G$ 
 with $\lambda^2(G)> (1- \frac{1}{r}) 2m$ satisfies $js_{r+1}(G)=\Omega_r (m^{(r-1)/2})$. Invoking the Kruskal--Katona theorem,  then $G$ contains a copy of $B_{r,k}$ of size $k=\Omega_r(\sqrt{m})$, and these bounds are tight up to constant factors. 
This gives an extension of Nikiforov's results for cliques \cite{Nikiforov2002} and books \cite{Nikiforov21}. 
It is interesting to determine the exact sharp bounds on joints and generalized books.

\begin{problem}
For every fixed integer $r\ge 3$, what are the largest constants $C_r$ and $C_r'$ such that for all sufficiently large $m$, if $G$ is an $m$-edge graph  with 
$$\lambda^2(G) > \Big(1- \frac{1}{r} \Big)2m,$$ 
then $G$ satisfies $js_{r+1}(G)\ge C_r m^{(r-1)/2}$, and $G$ contains a copy of  $B_{r,k}$ with $k= C_r' \sqrt{m}$? 
\end{problem}

\paragraph{Counting other substructures.} 
Ning and Zhai~\cite{NingZhai2023} showed that every graph $G$ with 
$\lambda(G) > \sqrt{m}$ contains at least $\lfloor \frac{1}{2}(\sqrt{m}-1) \rfloor$ 
triangles, and Li, Liu and Zhang~\cite{LiLiuZhang26} proved that the same condition 
forces at least $(\frac{1}{8} - o(1))\,m^2$ copies of $C_4$. It is natural to ask 
the analogous question for $5$-cycles, where the spectral extremal bound is known: 
Zhai, Lin and Shu \cite{ZLS2021} showed that a $C_5$-free graph $G$ with $m \ge 8$ edges satisfies 
$\lambda(G) \le \frac{1}{2}(1+\sqrt{4m-3}\,)$, with equality if and only if 
$G = K_2 \vee \frac{m-1}{2}K_1$. Does there exist a constant $C>0$ such that every 
graph $G$ of large size $m$ with $\lambda(G) > \frac{1}{2}(1+\sqrt{4m-3}\,)$ 
contains at least $Cm^2$ copies of $C_5$, and if so, what is the largest such $C$? 
An affirmative answer would make the order $m^2$ best possible, as witnessed by $K_3 \vee \frac{m-3}{3}K_1$. 

\medskip 
Recently, Fang, Lin and  Zhai~\cite{FangLinZhai}  established the edge-spectral supersaturation 
for all color-critical graphs with chromatic number at least $4$, 
and Li, Lin, Liu and Zhang~\cite{LLLZ2026} treated the classical bipartite graphs with the Sidorenko property, including $K_{t,t}$ and $C_{2t}$. 
Two cases thus remain open: color-critical graphs with chromatic number $3$ 
(e.g., $C_{2t+1}$ and $K_{t,t}^+$) and bipartite graphs whose Sidorenko property 
is unknown (e.g., $K_{5,5}\setminus C_{10}$).

\section*{Acknowledgments}
The authors developed the arguments, wrote and verified all proofs presented in this paper. During an early exploratory stage, the authors additionally used language-model-based tools to brainstorm candidate  strategies. Any suggestions arising from these tools served only as informal inspiration.

\end{document}